\newtheorem{thm}{Theorem}[section]
\newtheorem{definition}[thm]{Definition}
\newtheorem{theorem}[thm]{Theorem}
\newtheorem*{oseledec*}{Oseledec Theorem}
\newtheorem{cor}[thm]{Corollary}
\newtheorem{claim}[thm]{Claim}
\newtheorem{lemma}[thm]{Lemma}
\newtheorem{prop}[thm]{Proposition}
\def\moverlay{\mathpalette\mov@rlay}
\def\mov@rlay#1#2{\leavevmode\vtop{%
   \baselineskip\z@skip \lineskiplimit-\maxdimen
   \ialign{\hfil$\m@th#1##$\hfil\cr#2\crcr}}}
\newcommand{\charfusion}[3][\mathord]{
    #1{\ifx#1\mathop\vphantom{#2}\fi
        \mathpalette\mov@rlay{#2\cr#3}
      }
    \ifx#1\mathop\expandafter\displaylimits\fi}
\newcommand{\bigcupdot}{\charfusion[\mathop]{\bigcup}{\cdot}}
\let\ul\underline
\let\wh\widehat
\newcommand{\nocontentsline}[3]{}
\newcommand{\tocless}[2]{\bgroup\let\addcontentsline=\nocontentsline#1{#2}\egroup}
\newcommand{\osharp}{\charfusion[\mathbin]{\#}{\ocircle}}
\def\Jac{\ensuremath{\mathrm{Jac}}}
\def\HWT{\ensuremath{\mathrm{RWT}}}
\def\WT{\ensuremath{\mathrm{WT}}}
\def\Vol{\ensuremath{\mathrm{Vol}}}
\def\CAR{\ensuremath{\curvearrowright}}
\def\Sig{\ensuremath{\widehat{\Sigma}}}
\title{Hyperbolic SRB measures and the Leaf Condition
}
\author{Snir Ben Ovadia}
\address{Faculty of Mathematics and Computer Science, Weizmann Institute of Science, POB 26, Rehovot, 76100 ISRAEL} \email{snir.benovadia@weizmann.ac.il}
\begin{document}
\maketitle
\begin{abstract}
Let $M$ be a Riemannian, boundaryless, and compact manifold, with $\dim M\geq 2$ and let $f$ be a $C^{1+}$ diffeomorphism. We show that there is a hyperbolic SRB measure if and only if there exists an unstable leaf with a subset of positive leaf volume of hyperbolic points which return to some Pesin set with positive frequency. This answers a question of Pesin.
\end{abstract}

\tableofcontents
\section{Introduction and Overview of Main Results}
In this paper, we give necessary and sufficient conditions for the existence of hyperbolic SRB measures for non-uniformly hyperbolic diffeomorphisms on compact manifolds of arbitrary dimension. We begin by recalling what are SRB measures and why they are interesting.
\subsection{SRB Measures}
Let $M$ be a compact Riemannian manifold with no boundary, of dimension $d\geq 2$. Let $f\in \mathrm{Diff}^{1+\beta}(M)$, $\beta\in (0,1)$, the set of $C^1$ diffeomorphisms on $M$ with $\beta$-H\"older continuous differential.

An embedded submanifold $V^u$ is called an {\em unstable leaf} if $\forall x,y\in V^u$, $\limsup_{n\rightarrow\infty}\frac{1}{n}\log d(f^{-n}(x), f^{-n}(y))<0$. We say that $V^u$ is {\em of maximal dimension}, if it is not contained in any unstable leaf with larger dimension.

Given a measurable partition $\xi$, let $\xi(x)=$ the unique $\xi$-element which contains $x$. By Rokhlin's disintegration theorem, any Borel probability $\mu$ carried by $\cup\xi$, can be disintegrated into conditional measures w.r.t. $\xi$, $\mu=\int_M \mu_{\xi(x)}d\mu(x)$, where for $\mu$-a.e. $x$, $\mu_{\xi(x)}$ is a Borel probability carried by $\xi(x)$. A measurable partition is said to be {\em subordinated to the unstable foliation}, if every partition element is contained in an unstable leaf of maximal dimension $V^u(\xi(x))$, and $\lambda_{V^u(\xi(x))}(\xi(x))>0$, where $\lambda_{V^u(\xi(x))}$ is the induced Riemannian volume of $V^u(\xi(x))$.

An $f$-invariant Borel probability $\mu$ is called an SRB measure (after Sinai, Ruelle and Bowen), if given a measurable partition which is subordinated to the unstable foliation $\xi$, $\mu_{\xi(x)}\ll \lambda_{V^u(\xi(x))}$ for $\mu$-a.e. $x$.

SRB measures are interesting, because they serve as good substitutes for absolutely continuous invariant measures (a.c.i.m.), in cases when acims do not exist. This is due to the following three important properties of SRB measures:

\begin{enumerate}
	\item \textbf{Geometric Structure:} Compatibility with the induced Riemannian structure on unstable leaves even when the Riemannian volume is not preserved.
	\item \textbf{Physicality:} Every ergodic and hyperbolic SRB measure $\mu$ is physical: $$\Vol\left(\left\{x\in M:\forall g\in C(M), \lim_{n\rightarrow\infty}\frac{1}{n}\sum_{k=0}^{n-1}g\circ f^k(x)=\int gd\mu\right\}\right)>0,$$
where Vol denotes the Riemannian volume of $M$. This was first shown for uniformly hyperbolic diffeomorphisms by Ruelle \cite{Ruelle76}. For volume preserving non-uniformly hyperbolic diffeomorphisms, see \cite{Pesin77}, and for non volume preserving see \cite{KS,PughAndShub}. Physicality does not imply the SRB property. For the gap between the two properties, see \cite{Tsuji}.
 	\item \textbf{Entropic variational principle:} SRB measures satisfy Pesin's entropy formula $$h_\mu(f)=\int \sum_{i:\chi_i>0}\chi_i(x)d\mu(x)$$
where $\chi_i(x)$ is the $i$-th Lyapunov exponent of $x$ (with multiplicity),\cite{Ruelle76,Pesin77,EntropyFormulaSRB, StrelcynEntropyFormulaSRB}; and the LHS is strictly smaller than the RHS for all other measures \cite{Ruelle76,MargulisRuelleIneq,LedrappierYoungI}.
\end{enumerate}
For other nice properties of SRB measures, such as their role in the theory of random perturbation of smooth dynamical systems, see \cite{Kifer} and \cite{YoungStochasticStability}.
\subsection{The Problem} The natural question arises: which systems admit SRB measures?

In \cite{RodriguezHertz}, Rodriguez-Hertz, Rodriguez-Hertz, Tahzibi and Ures have introduced the notion of an {\em ergodic homoclinic class} associated to a periodic hyperbolic point. They have shown that each ergodic homoclinic class carries at most one (hyperbolic) SRB measure, and that every ergodic hyperbolic SRB measure is carried by a single ergodic homoclinic class.

Because of these results, 
we focus our work on constructing ergodic hyperbolic SRB measures on a given fixed ergodic homoclinic class
.
\subsection{Main Results} In this paper we give a necessary and sufficient condition for the existence of a hyperbolic SRB measure in terms of the structure of the unstable leaves of hyperbolic points (``Leaf condition"). We give an overview of our main results.

Building on our earlier works of \cite{SBO,LifeOfPi}, we describe in \textsection \ref{HWTsection} {\em explicit sets} $\HWT_\chi^\epsilon\subseteq M (\epsilon,\chi>0)$ with the following properties:
\begin{enumerate}
	\item $\HWT_\chi^\epsilon$ is $f$-invariant.
	\item $\HWT_\chi^\epsilon$ carries all hyperbolic $f$-invariant measures all of whose Lyapunov exponents have modulus bigger than $\chi$.
	\item $\HWT_\chi^\epsilon=\bigcup_{r=1}^\infty\Lambda_r$, where $\Lambda_r$ are suitably defined Pesin sets (see Definition \ref{PesinLevelSets}).
	\item $\exists \epsilon_\chi(\chi,M,f,\beta)$ s.t. $\forall \epsilon\in(0, \epsilon_\chi]$ one can code $f:\HWT_\chi^\epsilon\rightarrow \HWT_\chi^\epsilon$ by a two-sided countable Markov shift $\sigma:\Sig\rightarrow\Sig$ (see \textsection \ref{symbolicdynamicspart}).
\end{enumerate}
We now make the following definitions:
\begin{enumerate}
	\item $\HWT_\chi:=\HWT_\chi^{\epsilon_\chi}$,
	\item $\HWT_\chi^{\mathrm{PR}}:=\left\{x\in \HWT_\chi:\exists r_x\text{ s.t. }\limsup_{n\rightarrow\infty}\frac{1}{n}\sum_{k=0}^{n-1}\mathbb{1}_{\Lambda_{r_x}}(f^k(x))>0\right\}$,
	\item Given a hyperbolic periodic point $p$, the {\em ergodic homoclinic class} of $p$ (w.r.t. $\chi$) is $$H_\chi(p):=\{x\in\HWT_\chi:W^u(x)\pitchfork W^s(o(p))\neq\varnothing, W^s(x)\pitchfork W^u(o(p))\neq\varnothing\},$$
	where $W^u(x)=\bigcup_{n\geq0}f^{n}[V^u(f^{-n}(x))]$,  $W^s(x)=\bigcup_{n\geq0}f^{-n}[V^s(f^{n}(x))]$, where $V^{u/s}(\cdot)$ is the respective local unstable/stable leaf of a non-uniformly hyperbolic point, and $o(p)=\{p,\ldots,f^{\mathrm{Per}(p)-1}(p)\}$.
\end{enumerate}

Given a smooth submanifold $V\subseteq M$, let $\lambda_V$ denote the induced Riemannian volume on $V$. The main results of this paper are (Theorem \ref{theorem1}):
\begin{enumerate}
	\item[(a)] $f$ admits a $\chi$-hyperbolic SRB measure if and only if $\exists$ unstable leaf of maximal dimension $V^u$ s.t. 
\begin{equation}\label{Leaf-Condition}
\lambda_{V^u}(\HWT_\chi^{\mathrm{PR}})>0.
\end{equation}

	\item[(b)] Suppose $\exists$ unstable leaf of maximal dimension $V^u$ s.t. $\lambda_{V^u}(\HWT_\chi)>0$, then $\exists$ hyperbolic periodic point $q$ s.t. $H_\chi(q)$ carries an ergodic, conservative, invariant and $\sigma$-finite measure $\mu$ with absolutely continuous measures on unstable leaves. This is an SRB measure if and only if $\mu$ is finite.
\end{enumerate}

We remark that $V^u$ in (a),(b) can always be taken to be the unstable leaf of maximal dimension of a hyperbolic periodic point.

Our results extend the classical results of Sinai and Ruelle on the existence of SRB measures for uniformly hyperbolic diffeomorphisms to the non-uniformly hyperbolic setup. Let us explain how they apply in the particular examples of Anosov or Axiom A diffeomorphisms. For Anosov systems, the condition is satisfied trivially, since orbits are uniformly hyperbolic, and every point on an unstable leaf is hyperbolic. For Axiom A systems, orbits are uniformly hyperbolic; and when the basin of attraction has a positive volume, the volume of the hyperbolic points in an unstable leaf transversal to the saturation of stable leaves, must be positive due to Pesin's absolute continuity theorem.
Our results also extend other known results in the non-uniformly hyperbolic setup, such as \cite{Y} and \cite{PesinVaughnLuzatto} (see \textsection \ref{ComparisonTo} for more details).

 While the leaf condition is difficult to check in concrete examples (indeed we do not have new examples where we can check it), we believe it has theoretic value especially for studying the stability and genericity of the existence of SRB measures in ``natural classes" of dynamical systems. This is because the leaf condition can be checked on a single leaf of a single hyperbolic periodic point.

Our methods also give a characterization of the existence of a hyperbolic SRB  measure in the language of thermodynamic formalism of countable Markov shifts. The restriction of $f$ to an ergodic homoclinic class $H_\chi(p)$ admits a coding by a topologically transitive countable Markov shift $\sigma:\Sig\rightarrow \Sig$ (see \cite{BCS} and \textsection \ref{hertzetAl} below). Let
$$\Sig_L=\{(\ldots,R_{-2},R_{-1},R_0):\ul{R}\in \Sig\}$$	$$\sigma_R:\Sig_L\rightarrow \Sig_L,\text{ }\left(\sigma_R\ul{R}\right)_i=R_{i-1}.$$
In \textsection \ref{leavesandmeasures} we construct for each $\ul{R}\in \Sig_L$ an unstable leaf of maximal dimension $V^u(\ul{R})$ and a measure $m_{V^u(\ul{R})}$ on $V^u(\ul{R})$ s.t. $m_{V^u(\ul{R})}\sim \lambda_{V^u(\ul{R})}$ and
$$m_{V^u(\sigma_R\ul{R})}\circ f^{-1}|_{V^u(\ul{R})}=e^{\phi(\ul{R})}\cdot m_{V^u(\ul{R})}$$
where $\phi(\cdot)$ is an explicit one-sided cohomolog of the geometric potential in symbolic coordinates. We show
\begin{enumerate}
	\item[(c)] $\phi:\Sig_L\rightarrow\mathbb{R}$ is bounded, H\"older continuous, and has non-positive Gurevich pressure, and under the assumptions of (b), the measure from (b) is finite (and thus an ergodic hyperbolic SRB) 
	if and only if $\phi$ is positive recurrent with zero Gurevich pressure. (See \textsection \ref{lalipop} for Definitions, and Theorem \ref{naturalmeasures}, Claim \ref{VolHol}, Appendix A, Theorem \ref{ruellemargulis}, Theorem \ref{theorem1}).
\end{enumerate}

Finally, in proving (c), we also prove in a new way that a hyperbolic SRB measure satisfies Pesin's entropy formula, see Corollary \ref{kukilida}.

\subsection{Comparison to Other Results in the Literature}\label{ComparisonTo}

In their pioneering work, Sinai, Ruelle, and Bowen gave necessary and sufficient conditions for the existence of SRB measures in the context of Anosov and Axiom A systems (see \cite{Si1,Ruelle76,B4,BowenRuelle}). See also \cite{uGibbsPesinSinai} for existence of $u$-Gibbs measures on compact attractors with uniform expansion. In \cite{YoungCounterExample}, Hu and Young constructed an example of an ``almost Anosov" system, which admits no SRB measures. Later, in her celebrated result \cite{Y}
, Young showed the existence of an SRB measure for non-uniformly hyperbolic maps with Young towers, subject to an assumption of integrability of the return-time to 
the base of the tower w.r.t. the Lebesgue measure.

In \cite{PesinVaughnDima}, Climenhaga, Dolgopyat and Pesin have introduced the notion of a {\em leaf condition}, with ``effectively hyperbolic" points. 
 Climenhaga, Dolgopyat and Pesin show that if there exists a forward invariant set of positive Riemannian volume, which admits a measurable invariant family of stable and unstable cones, and the leaf condition is satisfied for an effectively hyperbolic subset of it, then there exists a hyperbolic SRB measure.

In the non-uniformly hyperbolic setup, Climenhaga, Luzzatto and Pesin have recently achieved a similar result to ours in the two-dimensional setup, \cite{PesinVaughnLuzatto}. They assume the existence of a geometric rectangle, with boundaries defined by the stable and unstable leaves of two homoclinically related periodic hyperbolic points, such that it contains a set of non-uniformly hyperbolic points which return with positive frequency to the geometric rectangle and to some Pesin set; and such that the saturation of their stable leaves has a positive Riemannian volume. In this case, they show that a hyperbolic SRB measure exists, and go further to show that when a hyperbolic SRB measure exists, this condition is satisfied. Their methods involve the construction of a Young tower, and are inherently two-dimensional (although their methods involve proving a refined shadowing theorem which one expects to extend to high dimensions). An important achievement in their work is the construction of a Young tower which is a first-return tower for a power of $f$. The power depends on the periods of the two periodic points which define the geometric rectangle.

The main difference between our results and theirs is that we do not assume $\dim M=2$, nor the existence of a rectangle as above.

The question whether a leaf condition implies the existence of an SRB measure was raised by Y. Pesin \cite{PrivatePesin}.

\subsection{An overview of the Proof}\label{ProofOutline}
Since the proof is long and technical, we thought it might be useful to give the reader an informal overview of the argument.

The main result says that the leaf condition \eqref{Leaf-Condition} is necessary and sufficient for the existence of an SRB measure.
The  direction $(\Leftarrow)$ is easy:
Suppose a $\chi$-hyperbolic SRB measure exists, then it gives $\HWT^{\mathrm{PR}}_\chi$ full measure. Therefore a.e. conditional measure of the SRB measure gives $\HWT^{\mathrm{PR}}_\chi$ full measure. Since the conditional measures of an SRB measure are absolutely continuous with respect to the induced Riemannian volume measure, the leaf condition \eqref{Leaf-Condition} must hold.
Henceforth,  we focus on the difficult direction $(\Rightarrow)$:
{\em If the leaf condition \eqref{Leaf-Condition} holds, then  a $\chi$-hyperbolic SRB measure exists.}

Constructing an SRB measure means constructing an invariant probability measure whose conditional measures on unstable leaves are absolutely continuous. To do this
{\em we will first  construct a family of absolutely continuous leaf measures which are carried by hyperbolic points, and  then we will integrate this family into a finite invariant measure $\mu$, using s suitably chosen measure $p$  on the space of leaves. The SRB property of $\mu$ is guaranteed, but to ensure invariance, we will need to make sure that the family of leaf measures and the measure $p$ we use to integrate it both satisfy certain explicit transformation laws.}


\medskip
\noindent\textbf{Step 1:  A family of smooth  measures on unstable leaves with a transformation law for $f^{-1}$.} In this step we construct a family of {\em smooth measures $m_{V^u}$ on unstable leaves $V^u$ with an explicit transformation law for the pull-back by $f^{-1}$}.

The full description of the transformation law can be found in Theorem \ref{naturalmeasures}, and we will not repeat it here. It says, roughly, that if $V_1^u, V^u_2$ are two unstable leaves such that $f^{-1}[V^u_1]\subset V^u_2$, then
\begin{equation}\label{Pull-back-trans-law}
m_{V^u_2}\circ f^{-1}|_{V^u_1}=e^{\phi(V^u_1,V^u_2)}m_{V^u_1}
\end{equation}
where {\em the Radon-Nikodym derivative $e^{\phi(V^u_1,V^u_2)}$ is  constant on $V^u_1$}.

The construction is based on a limiting procedure, as in \cite{Si1,uGibbsPesinSinai}: We  start from an absolutely continuous measure on an exponentially small unstable leaf of the point $f^{-n}(x)$, push it forward using $f^n$ to an unstable leaf of $x$, and pass to the limit as $n\to\infty$.
Most of the technical work goes into  controlling the regularity of the densities of the measures we get this way, in the limit as $n\to\infty$.

%

\medskip
\noindent\textbf{Step 2: The Markovian structure of the space of  smooth leaf measures.} Our next task is the find a measure $p$ on the space of leaf measures so that we can integrate the leaf measures into a finite invariant measure.
%
As preparation for this, we first parameterize the family of leaf measures by a one-sided countable Markov shift. This will later enable us obtain $p$ from a suitable measure on this shift space.

We use the {\em countable Markov partition} from \cite{SBO}. While this partition does not cover all of the manifold, it does cover $\HWT^{\mathrm{PR}}_\chi$, and thus it carries all $\chi$-hyperbolic invariant probability measures (see section \ref{symbolicdynamicspart} for details).

A Markov partition induces a coding by the space of all the {\em admissible bi-infinite words} whose letters are elements of the partition.\footnote{Admissible means that for any two consecutive letters which represent partition elements $a$ and $b$ resp., $a\cap f^{-1}[b]\neq\varnothing$.}
The space $\Sig_L$ of all admissible words $\underline{R}=(\ldots, w_{-1},w_0)$ which are infinite to the left (made of letters which are elements of the Markov partition) is endowed with its own natural dynamics and topology.

{\em For every  infinite word $\underline{R}\in\Sig_L$, we construct a corresponding local unstable leaf of maximal dimension $V^u(\underline{R})$}, such that (a) $\underline{R}\mapsto V^u(\underline{R})$  has strong continuity properties; and (b) the pull-back (under $f^{-1}$) of an unstable leaf associated to an infinite word $(\ldots,w_{-2},w_{-1},w_0)$ is contained in the unstable leaf associated to the word $(\ldots,w_{-3}, w_{-2},w_{-1})$.
We then check that (c) the composed map
$$\text{infinite word}\mapsto\text{local unstable leaf of maximal dimension}\mapsto\text{smooth leaf measure from step 1}$$
has good continuity properties. 
At the end of all this work, we have a continuous parametrization of the space of smooth leaf measures from step 1 by means of the collection of left infinite words $(\ldots, w_{-1},w_0)$ of a countable Markov shift.

%

\medskip
\noindent\textbf{Step 3: Canonical codings and a family of absolutely continuous measures with a transformation rule for $f$.}
At this point we need to address a subtle but crucial problem. 

Let $\sigma_R:\Sig_L\to\Sig_L$ be the right shift $\sigma_R(\ldots,w_{-1},w_0):=(\ldots,w_{-2},w_{-1}).$
We would have liked to use  the transformation rule \eqref{Pull-back-trans-law} for $f^{-1}$  to deduce the following transformation rule for $f$:
 $$
m_{V^u(\underline{R})}\circ f^{-1}\overset{?}{=}\sum_{\sigma_R(\underline{S})=\underline{R}}e^{\phi(V^u(\underline{S}))}
m_{V^u(\underline{S})}.
$$
{\em But $\overset{?}{=}$ may be false, because of possible overlaps between $V^u(\underline{S})$ for different $\underline{S}\in\sigma_R^{-1}[\{\underline{R}\}]$.  }
The problem is not just overlaps  at the boundaries of the  elements of the Markov partition of $\HWT^{\mathrm{PR}}_\chi$, but also the overlaps  outside the set covered by the Markov partition.
In step 3 we address this difficulty.

The coding from the second step may not be one-to-one, however the orbit of each point in the set which is covered by the partition has a unique itinerary (i.e. a list of the partition elements in which its orbit elements lie). We call the  collection of all such itineraries, the space of {\em the canonical codings}.

We restrict the smooth measures $m_{V^u(\underline{R})}$ from the second step to the intersection of $V^u(\underline{R})$ with the stable leaves associated to canonical codings of points in $V^u(\underline{R})$. Call these restrictions $\mu_{\underline{R}}$. (It is not clear that $\mu_{\underline{R}}\neq 0$, we address this in step 5.)
%
%
For these measures, we do indeed have: 
\begin{enumerate}
\item  {\em Transformation law for $f$:\/} Let $\phi(\underline{R})=\phi(V^u(\underline{R}), V^u(\sigma_R\ul{R}))$  (see step 1), then
\begin{equation}\label{Trans-Law}
\mu_{\underline{R}}\circ f^{-1}=\sum_{\sigma_R(\underline{S})=\underline{R}}e^{\phi(\underline{S})}\mu_{\underline{S}}.
\end{equation}

\item Absolute continuity with respect to the induced Riemannian leaf volume.
\item  $\mu_{\underline{R}}$ a.e. point is the intersection of a stable leaf and an unstable leaf.

\end{enumerate}
We call the family $\{\mu_{\underline{R}}\}$ the family of  {\em absolutely continuous leaf measures}. 

\medskip
This is the family of leaf measures which we plan to  integrate into an invariant SRB measure of the form $\mu:=\int_{\Sig_L}\mu_{\underline{R}}dp$, for some measure $p$. It remains to find $p$.

Because of the transformation law \eqref{Trans-Law}, in order for $\mu$ to be $f$-invariant, it is sufficient to choose $p$ to be an 
eigenmeasure with eigenvalue one for the Ruelle operator of $\phi$ on $\Sig_L$ (see Definition \ref{RuelleOpe}).
We will call such measures {\em $\phi$-conformal measures.} 

The remaining steps of the proof show that such measures exist. This is done using  the ``generalized Ruelle's Perron-Frobenius Theorem" from \cite{SarigNR}. This theorem  gives a necessary and sufficient condition for the existence of a conservative eigenmeasure $p$ for the Ruelle measure of $\phi$. The following steps verify  these sufficient conditions.

%
%
%

\medskip
\noindent\textbf{Step 4: Modulus of continuity of $\phi(\underline{R})$ on $\Sig_L$.}

The first condition to check is that $\phi(\underline{R})$ is H\"older continuous on $\Sig_L$,  with respect to the natural metric on $\Sig_L$. 
%
%
%
We do this by a careful analysis of the modulus of continuity of the maps in the following composition:
 $$\text{infinite word $\underline{R}$}\mapsto\text{local unstable leaf } V^u(\underline{R})\mapsto\text{smooth leaf measure }  m_{V^u(\underline{R})}\mapsto\text{log-Jacobian }\phi(\underline{R}).$$

\medskip
\noindent\textbf{Step 5: A non-trivial transitive component.}
Sarig's construction of $\phi$-conformal measures also requires  {\em topological transitivity} of the shift. 

The right shift on $\Sig_L$ is not necessarily topologically transitive. In step 5 we find a topologically transitive component of $\Sig_L$ on which $\mu_{\underline{R}}\neq 0$.

The transitive components we use  arise as the codings of ergodic homoclinic classes in the sense of \cite{RodriguezHertz}, see \cite{BCS} and \cite{LifeOfPi}.
%
To find a transitive component with non-vanishing leaf measures $\mu_{\underline{R}}$ we use the leaf condition \eqref{Leaf-Condition}.
This condition guarantees the existence of at least one non-zero $\mu_{\underline{R}}$, and we show that if $\mu_{\underline{R}}\neq 0$ for one leaf, then $\mu_{\underline{S}}\neq 0$ for all $\underline{S}$ in a transitive component of $\Sig_L$.

\medskip
\noindent\textbf{Step 6: Thermodynamic properties of $\phi$.} Next we  check the following thermodynamic properties of $\phi$ (see section \ref{lalipop} for definitions):

\begin{enumerate}
\item the {\em recurrence} of $\phi$,
\item the {\em Gurevich pressure} of $\phi$ is $0$.
\end{enumerate}
This is done using the Leaf condition, and a Borel-Cantelli argument.

Once the thermodynamic properties of $\phi$ are verified, we can apply the generalized Ruelle's Perron-Frobenius Theorem and deduce the existence of an eigenmeasure $p$ for the Ruelle operator. 
 The recurrence of $\phi$ guarantees that $p$ is conservative (i.e. has no non-trivial wandering sets). The vanishing of the Gurevich pressure, ensures that the eigenvalue is equal to one, and therefore that $p$ is a $\phi$-conformal measure. Together with the transformation rule \eqref{Trans-Law}, $\phi$-conformality implies that 
$$\mu:=\int_{\Sig_L}\mu_{\underline{R}}dp$$ is an $f$-invariant, non-identically zero, conservative, and $\sigma$-finite measure for which almost every point has a stable and an unstable local leaf. 
However, it is still not clear that $\mu$ is a finite measure.




\medskip
\noindent\textbf{Step 7: Finiteness of $\mu$.} At this point in the proof, we have a measure
$\mu=\int_{\Sig_L}\mu_{\underline{R}}dp$ as follows:
%
%
\begin{enumerate}
\item $\mu$ is not the zero measure (by choice of the transitive component in step 5),
\item $\mu$ is $f$-invariant (because of the transformation law \eqref{Trans-Law}),	
\item $\mu$ is conservative (by recurrence),
\item $\mu$ is carried by hyperbolic points (by the construction of $\mu_{\underline{R}}$),
\item $\mu$ is $\sigma$-finite and ergodic (these properties hold for $p$ bt \cite{SarigNR}, and we show they extend to $\mu$). 
\end{enumerate}

%
%
%
%

We strengthen (5) and show  that $\mu$ is finite on Pesin sets.

By the leaf condition, for  $\mu$-a.e. orbit, the density of times when  the orbit enters some (fixed) Pesin set is positive. This implies that $\mu$ is finite, otherwise by ergodicity, conservativity, and the Ratio Ergodic Theorem, the density  would have to be equal  to zero.
So $\mu$ must be finite. 

\medskip
\noindent
\textbf{Completion of the proof.} 
Let us normalize $\mu$ to have a total mass one. As explained in the end of step 6, $\mu$ is $f$-invariant. By construction, $\mu_{\underline{R}}$ are carried by intersections of stable and unstable leaves of canonical codings (step 3), and this can be used to show that $\mu$ almost every orbit is hyperbolic. Finally, the  conditional measures of $\mu$ on $V^u(\underline{R})$ are absolutely continuous with respect to the induced Riemannian volume measure, because by construction, they are  proportional to $\mu_{\underline{R}}$. Therefore $\mu$ is a {hyperbolic SRB measure}, and we have shown that the leaf condition \eqref{Leaf-Condition} implies the existence of a hyperbolic SRB measure.

\medskip
\noindent
\textbf{A road map to the proof:}
 Step 1 can found in Theorem \ref{naturalmeasures}. Step 2 can be found in Definition \ref{Doomsday}, Theorem \ref{pizzahat}, Definition \ref{unstablemanifold}, and Lemma \ref{0.1}. Step 3 can be found in Definition \ref{A_R} and Proposition \ref{PropMuR}. Step 4 can be found in Claim \ref{VolHol}. Step 5 can be found in Lemma \ref{canonicassumption} and Proposition \ref{psitilda}. Step 6 can be found in Theorem \ref{PG0}. Step 7 can be found in Theorem \ref{theorem1} and Theorem \ref{posrecclaim}.
\subsection{Notation}
\begin{enumerate}
	\item For every $a,b\in\mathbb{R}$, $c\in\mathbb{R}^+$, $a=e^{\pm c}\cdot b$ means $e^{-c} \cdot b\leq a\leq e^c\cdot b$, and $a=b\pm c$ means $b-c\leq a\leq b+c$.
	\item $\forall a,b\in\mathbb{R}$, $a\wedge b:=\min\{a,b\}$.
	\item TMS stands for a topological Markov shift (i.e. the space of all bi-infinite admissible paths on a directed graph).
	\item For every topological Markov shift $\Sigma$ which is induced by a graph $\mathcal{G}:=(\mathcal{V},\mathcal{E})$ (e.g. Theorem \ref{mainSBO}), for every finite admissible path $(v_0,...,v_l)$, $v_i\in \mathcal{V}$,$0\leq i\leq l$, a cylinder is a subsets of the form $[v_0,...,v_l]_m=\{\ul{u}\in \Sigma: u_{i+m}=v_i,\forall0\leq i\leq l\}$. When the $m$ subscript is omitted, if not mentioned otherwise, $m=0$ or $m=-l$.
\item Let $x\in M$. $T_xM$ is the tangent space to $M$ at $x$, $d_xf:T_xM\rightarrow T_{f(x)}M$ is the differential of $f$ at $x$, and $\Jac(d_xf):=|\det(d_xf)|$ is the Jacobian of $d_xf$ w.r.t. the Riemannian metrics of $T_xM, T_{f(x)}M$.
\item $-\mathbb{N}:=\{\ldots,-2,-1,0\}$.
\item Given a Borel measure $\mu$ and a measurable function $\rho$ which is defined $\mu$-a.e., the measure $\rho\cdot \mu$ (also $\rho\mu$ for short) is defined by $(\rho \mu)(A):=\int_A \rho(x)d\mu(x)$.
\end{enumerate}


\section{The Set of Hyperbolic Points  $\HWT_\chi$}\label{HWTsection}

In this section we introduce the set of hyperbolic points which we use in our construction. The significance of this set is that it carries every hyperbolic $f$-invariant probability measure with Lyapunov exponents  outside of $[-\chi,\chi]$, $\chi>0$), and in particular the hyperbolic SRB measure we wish to construct. In addition, $\HWT_\chi$ is covered by a Markov partition which we use later on (see \textsection \ref{Symba}). Moreover, this set is in a sense the ``maximal" set with these properties, as it also contains all elements of the Markov partition. 

The reader may wonder why we make an effort to describe  $\HWT_\chi$ explicitly, given the fact that it is  a set of full measure for all $\chi$-hyperbolic invariant measures. The reason is that in our proof we will also use leaf measures and infinite measures. For such measures, $\HWT_\chi$ is not necessarily a set of full measure, and it will be important to us to know which points it contains, and which not.

\begin{definition}\label{ChiHyp} Fix $\chi>0$.

\begin{enumerate}
         \item A point $x\in M$ is called {\em $\chi$-summable} if it belongs to the following set:
         \begin{align*}
        \chi\text{-}\mathrm{summ}:=&\{x\in M: \exists\text{ a unique splitting }T_xM=H^s(x)\oplus H^u(x)\text{ s.t. }\\
        &\sup_{\xi_s\in H^s(x),|\xi_s|=1}\sum_{m=0}^\infty|d_xf^m\xi_s|^2e^{2\chi m}<\infty,\sup_{{\xi_u\in H^u(x),|\xi_u|=1}}\sum_{m=0}^\infty|d_xf^{-m}\xi_u|^2e^{2\chi m}<\infty\}.
    \end{align*}
    For each $x\in\chi\text{-}\mathrm{summ}$, write $s(x):=\mathrm{dim}(H^s(x)),u(x):=\mathrm{dim}(H^u(x))$.
    \item A point $x\in M$ is called {\em $\chi$-hyperbolic} if it belongs to the following set:
    \begin{align*}
    \chi\mathrm{-hyp}:=&\{x\in M: \exists\text{ a unique splitting }T_xM=H^s(x)\oplus H^u(x)\text{ s.t. }\forall\xi_s\in H^s(x)\setminus\{0\},\xi_u\in H^u(x)\setminus\{0\},\\
    &\limsup_{n\rightarrow\infty}\frac{1}{n}\log|d_xf^{n}\xi_s|,\limsup_{n\rightarrow\infty}\frac{1}{n}\log|d_xf^{-n}\xi_u|<-\chi\}.
    \end{align*}
    A measure carried by $\chi\mathrm{-hyp}$ is called {\em $\chi$-hyperbolic}.
\end{enumerate}
\end{definition}
Notice that $\chi$-hyp$\subseteq\chi\text{-}\mathrm{summ}$.


The Pesin-Oseledec reduction theorem has many different versions, which are suitable for different setups (see \cite{BP}). We use the version which appears, with proof, in \cite[Theorem~2.4]{SBO}.

\begin{theorem}[Oseledec-Pesin~Reduction~Theorem]\label{pesinoseledec}
For each point $x\in\chi\text{-}\mathrm{summ}$, there exists an invertible linear map $C_\chi(x):\mathbb{R}^d\rightarrow T_xM$, which depends measurably on $x$, such that 
$C_\chi(x)[\mathbb{R}^{s(x)}\times\{0\}]=H^s(x),C_\chi(x)[\{0\}\times\mathbb{R}^{u(x)}]=H^u(x)$. 
$C_\chi(\cdot)$ can be chosen measurably on $\chi\text{-}\mathrm{summ}$, and the choice is unique up to a composition with orthogonal self mappings of $H^s(x),H^u(x)$. In addition,
$$
C_\chi^{-1}(f(x))\circ d_xf\circ C_\chi(x)=\begin{pmatrix}D_s(x)  &   \\  & D_u(x)
\end{pmatrix},
$$
where $D_s(x),D_u(x)$ are square matrices of dimensions $s(x),u(x)$ respectively, and $\|D_s(x)\|,\|D_u^{-1}(x)\|\leq e^{-\chi}$,$\|D_s^{-1}(x)\|,\|D_u(x)\|\leq \kappa$ for some constant $\kappa=\kappa(f,\chi)>1$.
\end{theorem}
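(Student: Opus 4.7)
The plan is to carry out the classical Pesin--Oseledec Lyapunov change-of-coordinates, adapted to the summability hypothesis defining $\chi-\mathrm{summ}$ (rather than to the stronger Lyapunov regularity). The key observation is that the infinite series appearing in the definition of $\chi-\mathrm{summ}$ are precisely what is needed to declare the Lyapunov inner products well-defined; convergence for diagonal terms yields, via polarization and Cauchy--Schwarz, convergence for off-diagonal pairings. Concretely, for $\xi,\eta\in H^s(x)$ I would set
$$\langle\xi,\eta\rangle^s_{\chi,x} := \sum_{m=0}^\infty \langle d_xf^m\xi,\, d_xf^m\eta\rangle\, e^{2\chi m},$$
and analogously $\langle\cdot,\cdot\rangle^u_{\chi,x}$ on $H^u(x)$ using $d_xf^{-m}$; then declare $H^s(x)\perp H^u(x)$ to obtain a new inner product on $T_xM$.

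The contraction/expansion estimates follow from a telescoping identity: for $\xi\in H^s(x)$,
$$\bigl(|d_xf\xi|^s_{\chi,f(x)}\bigr)^2 = \sum_{m=0}^\infty |d_xf^{m+1}\xi|^2 e^{2\chi m} = e^{-2\chi}\Bigl(\bigl(|\xi|^s_{\chi,x}\bigr)^2 - |\xi|^2\Bigr)\leq e^{-2\chi}\bigl(|\xi|^s_{\chi,x}\bigr)^2,$$
with the symmetric bound $|d_xf^{-1}\eta|^u_{\chi,f^{-1}(x)}\leq e^{-\chi}|\eta|^u_{\chi,x}$ for $\eta\in H^u(x)$. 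I would then define $C_\chi(x)\colon\mathbb{R}^d\to T_xM$ by sending the standard basis of $\mathbb{R}^{s(x)}\times\{0\}$ to an orthonormal basis of $H^s(x)$ with respect to $\langle\cdot,\cdot\rangle^s_{\chi,x}$, and the standard basis of $\{0\}\times\mathbb{R}^{u(x)}$ to an orthonormal basis of $H^u(x)$ with respect to $\langle\cdot,\cdot\rangle^u_{\chi,x}$. With this choice, $C_\chi^{-1}(f(x))\circ d_xf\circ C_\chi(x)$ is block-diagonal, and the estimates above translate into $\|D_s(x)\|,\|D_u^{-1}(x)\|\leq e^{-\chi}$. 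The upper bound $\kappa$ on $\|D_s^{-1}(x)\|,\|D_u(x)\|$ comes from the uniform bound $K:=\sup_{y\in M}\max\{\|d_yf\|,\|d_yf^{-1}\|\}<\infty$ (compactness of $M$) via a further telescoping that yields $\bigl(|d_xf^{-1}w|^s_{\chi,x}\bigr)^2\leq (K^2+e^{2\chi})\bigl(|w|^s_{\chi,f(x)}\bigr)^2$, so one may take $\kappa=\sqrt{K^2+e^{2\chi}}$.

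For measurability and uniqueness: the summability condition forces uniqueness of the splitting $T_xM=H^s(x)\oplus H^u(x)$, since a nonzero vector cannot simultaneously have its forward and backward orbits decay summably with weight $e^{2\chi m}$. Measurability of $x\mapsto H^{s/u}(x)$ as Grassmannian-valued maps then follows, after decomposing $\chi-\mathrm{summ}$ according to the pair of dimensions $(s(x),u(x))$, from their characterization by a countable family of measurable conditions on $d_xf^{\pm m}$; the Lyapunov inner products are countable sums of measurable functions, hence measurable; and a measurable selection of orthonormal bases is produced by applying Gram--Schmidt to a measurable frame. The remaining freedom is precisely an orthogonal transformation within each of the stable and unstable factors, since any two orthonormal bases of a finite-dimensional inner product space differ by an orthogonal map. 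The main obstacle I foresee is this measurability step: in the absence of Lyapunov regularity one cannot appeal to Oseledec's theorem directly, so the Grassmannian-bundle measurable structure and a measurable selection theorem must be invoked, working only from the summability definition.
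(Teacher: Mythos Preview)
The paper does not give its own proof of this statement; immediately after the theorem it writes ``We use the version which appears, with proof, in \cite[Theorem~2.4]{SBO}.'' Your approach---defining Lyapunov inner products on $H^s(x)$ and $H^u(x)$ from the summable series, taking $C_\chi(x)$ to send the standard basis to a $\langle\cdot,\cdot\rangle_\chi$-orthonormal frame, and extracting the norm bounds by the telescoping identities---is precisely the classical construction and is correct; your computation $\kappa=\sqrt{K^2+e^{2\chi}}$ is exactly what the telescoping gives. This is what one expects the cited reference to contain.

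One small caveat. Your one-line justification for uniqueness of the splitting (``a nonzero vector cannot simultaneously have its forward and backward orbits decay summably with weight $e^{2\chi m}$'') is not quite right as stated: for an abstract invertible linear cocycle nothing prevents a nonzero $\eta$ from satisfying both $\sum_{m\ge 0}|d_xf^{m}\eta|^2e^{2\chi m}<\infty$ and $\sum_{m\ge 0}|d_xf^{-m}\eta|^2e^{2\chi m}<\infty$ (take a diagonal cocycle whose $H^u$-eigenvalue is $e^{-2\chi}$ for $n\ge 0$ and $e^{2\chi}$ for $n<0$). This does not damage your construction of $C_\chi(x)$, which only needs \emph{some} splitting fixed; once it is, the Lyapunov inner product and hence $C_\chi$ up to block-orthogonal are determined, which is exactly the uniqueness the theorem asserts. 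So read the uniqueness clause relative to a measurably chosen splitting rather than as a consequence of your sentence.
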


It is possible to show that $\|C_\chi^{-1}(x)\|^2=\sup\limits_{\overset{|\xi_s+\xi_u|=1,}{\xi_s\in H^s(x), \xi_u\in H^u(x)}}\left\{2\sum\limits_{m\geq0}|d_xf^m\xi_s|^2e^{2\chi m}+ 2\sum\limits_{m\geq0}|d_xf^{-m}\xi_u|^2e^{2\chi m}\right\}$. See \cite[Theorem~2.4]{SBO} for details. $\|C_\chi^{-1}(x)\|$ depends only on the unique splitting $T_xM=H^s(x)\oplus H^u(x)$. $\|C_\chi^{-1}(x)\|$ serves as a measurement of the hyperbolicity of $x$: The greater the norm, the worse the hyperbolicity (slow contraction/expansion on stable/unstable spaces, or small angle between the stable and unstable spaces).

\begin{definition}\label{littleQ} Let $\epsilon>0$, and let $x\in\chi\text{-}\mathrm{summ}$, then

\medskip
    $$Q_\epsilon(x):=\max\{Q\in \{e^{\frac{-\ell\epsilon}{3}}\}_{\ell\in\mathbb{N}}:Q\leq 3^\frac{-6}{\beta}\epsilon^\frac{90}{\beta}\|C^{-1}_\chi(x)\|^\frac{-48}{\beta}\}.$$
\end{definition}
$Q_\epsilon(\cdot)$ depends only on the norm of $C_\chi^{-1}(\cdot)$ , and is indifferent to composition with orthogonal self mappings of the ``stable" and ``unstable" subspaces.

\begin{definition}[Recurrent $\epsilon$-weak temperability]\label{temperable}
Let $\epsilon>0$. A point $x\in\chi\text{-}\mathrm{summ}$ is called {\em $\epsilon$-weakly temperable} if  $\exists q:\{f^n(x)\}_{n\in\mathbb{Z}}\rightarrow \{e^{\frac{-\ell\epsilon}{3}}\}_{\ell\in\mathbb{N}}$ s.t.
\begin{enumerate}
	\item $\frac{q\circ f}{q}=e^{\pm\epsilon}$,
	\item $\forall n\in\mathbb{Z}$, $q(f^n(x))\leq Q_\epsilon(f^n(x))$.
\end{enumerate}
An $\epsilon$-weakly temperable point $x$ is called {\em recurrently $\epsilon$-weakly temperable}, if in addition to (1),(2), $q:\{f^n(x)\}_{n\in\mathbb{Z}}\rightarrow \{e^{-\frac{\ell \epsilon}{3}}\}_{\ell\in\mathbb{N}}$ can be chosen to satisfy also (3):
\begin{enumerate}
\item[(3)]$\limsup\limits_{n\rightarrow\infty}q(f^n(x)), \limsup\limits_{n\rightarrow\infty}q(f^{-n}(x))>0$.
\end{enumerate}
Define $$\WT_\chi^\epsilon:= \{x\in\chi\text{-}\mathrm{summ}:x\text{ is } \epsilon\text{-weakly temperable}\},$$ and  $$\HWT_\chi^\epsilon:= \{x\in\chi\text{-}\mathrm{summ}:x\text{ is recurrently } \epsilon\text{-weakly temperable}\}.$$
 $\WT_\chi^\epsilon$ is the set of {\em weakly temperable points}, with parameters $\chi,\epsilon>0$, and $\HWT_\chi^\epsilon$ is the set of {\em recurrently weakly temperable points}, with parameters $\chi,\epsilon>0$.
\end{definition}

\noindent Theorem \ref{epsilonika} gives an important application to the definition of $\HWT_\chi^\epsilon$.

\medskip
\noindent In the following parts of this paper, when $\chi>0$ is fixed, the subscript of $\epsilon_\chi$ would be omitted to ease notation. In addition, we may assume $\epsilon>0$ is arbitrarily small, since the results of \cite{SBO,LifeOfPi} apply to all $\epsilon\in (0,\epsilon_\chi]$, for a fixed $\chi>0$.

\begin{definition}\label{NUHsharp}
 $$\HWT_\chi:=\Big\{x\in\chi\text{-}\mathrm{summ}:x\text{ is recurrently }\epsilon_\chi\text{-weakly temperable}\Big\}.$$
\end{definition}

\noindent\textbf{Remark:} $\HWT_\chi$ carries all $\chi$-hyperbolic $f$-invariant probability measures; and $\HWT_\chi$ is defined canonically,\footnote{I.e. its definition does not rely on a specific construction of symbolic dynamics, but only on the quality of hyperbolicity of the orbit of the point.} see \cite{LifeOfPi}. In the upcoming parts of this paper, we focus our attention to this set, when constructing a $\chi$-hyperbolic SRB measure.

\section{Preliminary Constructions}\label{symbolicdynamicspart}
\subsection{Symbolic Dynamics}\label{Symba}
Sarig constructed a Markov partition for non-uniformly hyperbolic surface diffeomorphisms in \cite{Sarig}. Later, we extended his results to manifolds of any dimension greater or equal to 2 in \cite{SBO}. These codings do not code all $x\in M$. In \cite{LifeOfPi}, we gave a canonical description of all points with codings which do not escape to infinity (i.e. do not escape every compact set of the symbolic space, see Proposition \ref{cafeashter}). In this section, we present an overview of these results.

Since $M$ is compact, $\exists r=r(M)>0,\rho=\rho(M)>0$ s.t. the exponential map $\exp_x: \{v\in T_xM:|v|\leq r\}\rightarrow B_\rho(x)=\{y\in M: d(x,y)<\rho\}$ is well defined and smooth. When $\epsilon\leq r$, the following is well defined since $C_\chi(\cdot)$ is a contraction (see \cite{BP},\cite{KM},\cite[Lemma~2.9]{SBO}):
\begin{definition}[Pesin-charts]\label{PesinCharts}\text{}
\begin{enumerate}
	\item $\psi_x^\eta:=\exp_x\circ C_\chi(x):\{v\in T_xM:|v|\leq \eta\}\rightarrow B_\rho(x)$, $\eta\in (0,Q_\epsilon(x)]$, is called a {\em Pesin-chart}.
	\item A {\em double Pesin-chart} is an ordered couple $\psi_x^{p^s, p^u}:=(\psi_x^{p^s}, \psi_x^{p^u})$, where $\psi_x^{p^s}$ and $\psi_x^{p^u}$ are Pesin-charts.
\end{enumerate}
\end{definition}

\begin{theorem}\label{mainSBO}
	 $\forall \chi>0$ s.t. $\exists p\in\chi$-$\mathrm{hyp}$ a periodic hyperbolic point, $\exists$ a countable and locally-finite\footnote{I.e. finite number of in-going and out-going edges at each vertex.} directed graph $\mathcal{G}= \left(\mathcal{V},\mathcal{E}\right)$ which induces a topological Markov shift $
\Sigma:=\{\ul{u}\in\mathcal{V}^\mathbb{Z}:
(u_i,u_{i+1})\in	\mathcal{E},\forall i\in\mathbb{Z}\}$. $\Sigma$ admits a map $\pi:\Sigma\rightarrow M$ with the following properties:
	 \begin{enumerate}
	 	\item $\sigma:\Sigma\rightarrow\Sigma$, $(\sigma \ul{u})_i:=u_{i+1}$, $i\in \mathbb{Z}$ (the left-shift); $\pi\circ\sigma=f\circ\pi$.
	 	\item $\pi$ is a H\"older continuous map w.r.t. to the metric $d(\ul{u},\ul{v}):=\exp\left(-\min\{i\geq0:u_i\neq v_i\text{ or } u_{-i}\neq v_{-i}\}\right)$.
	 	\item Let $\Sigma^\#:=\left\{\ul{u}\in \Sigma:\exists n_k,m_k\uparrow\infty\text{ s.t. }u_{n_k}=u_{n_0}, u_{-m_k}=u_{-m_0},\forall k\geq0\right\}$. Then 
	 $\pi[\Sigma^\#]$ carries all $f$-invariant, $\chi$-hyperbolic probability measures.
	 \end{enumerate}
\end{theorem}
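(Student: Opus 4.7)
The plan is to follow the Sarig-style construction of symbolic dynamics developed for surfaces in \cite{Sarig} and extended to arbitrary dimension by the author in \cite{SBO}, incorporating the canonical characterization of recurrently codable points from \cite{LifeOfPi}. The strategy splits into a direct coding (shadowing of admissible chains produces genuine orbits in $M$) and an inverse coding (every $\chi$-hyperbolic orbit admits a recurrent coding). Throughout, the existence of a periodic point in $\chi$-$\mathrm{hyp}$ is used to guarantee that the alphabet to be constructed is nonempty.

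First I would build the vertex set $\mathcal{V}$. For each $x \in \HWT_\chi$ and each pair $(p^s, p^u) \in (0, Q_\epsilon(x)]^2$ drawn from the discrete ladder $\{e^{-\ell\epsilon/3}\}_{\ell\in\mathbb{N}}$, the double Pesin chart $\psi_x^{p^s,p^u}$ of Definition \ref{PesinCharts} is well defined. To obtain a countable alphabet I discretize the basepoints $x$ to a countable dense net in $M$ together with a discrete net in the Grassmannian controlling $C_\chi(x)$, so that two points are identified when their Pesin data agree up to error $e^{\pm \epsilon/3}$. Local finiteness is then enforced by restricting attention to \emph{tempered} vertices, where the radii $p^s,p^u$ are additionally bounded by the values of $Q_\epsilon$ at the forward and backward iterate, up to multiplicative error $e^{\pm\epsilon}$; this is the finite-time analogue of Definition \ref{temperable}.

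Edges $(v_0,v_1)\in\mathcal{E}$ are declared by the ``$\epsilon$-overlap'' conditions of \cite{SBO}: the $f$-image of the domain of $v_0$ must lie in the domain of $v_1$ with quantitative Lyapunov-norm control on $dfC_\chi$, and the radii $(p^s_i,p^u_i)$ must respect the matching inequalities coming from Theorem \ref{pesinoseledec}. This directly gives a countable locally finite graph, and property $(1)$ follows from the definition of the shift. To construct $\pi$ I perform the standard graph transform / stable-manifold intersection argument: for each $\ul{u}\in\Sigma$ the nested sequence of admissible chart domains, contracted by the stable and unstable rates $e^{-\chi n}$, yields a unique intersection point $\pi(\ul{u})\in M$, and $\pi\circ\sigma=f\circ\pi$ is automatic. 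Property $(2)$, H\"older continuity of $\pi$, follows because two sequences agreeing on the block $[-n,n]$ have $\pi$-images trapped in pseudo-stable/unstable manifolds of radius $O(e^{-\chi n})$.

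The main obstacle is property $(3)$, the inverse coding statement. Here I need to show that for every $x\in\HWT_\chi$ there is $\ul{u}\in\Sigma^\#$ with $\pi(\ul{u})=x$. Weak temperability supplies a function $q_\epsilon$ along the orbit with $q_\epsilon\circ f / q_\epsilon = e^{\pm\epsilon}$, $q_\epsilon(f^n x)\le Q_\epsilon(f^n x)$, and $\limsup_{n\to\pm\infty}q_\epsilon(f^n x)>0$. The first two conditions guarantee that $\psi_{f^n x}^{q_\epsilon(f^n x), q_\epsilon(f^n x)}$ are admissible edges in $\mathcal{G}$ for every $n$; the third condition, combined with the discreteness of the alphabet, allows a pigeonhole argument forcing some single vertex to be visited infinitely often in both time directions along the coding of $x$, which is precisely membership in $\Sigma^\#$. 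Since any $f$-invariant $\chi$-hyperbolic probability measure is carried by $\HWT_\chi$ (which is the defining feature of the canonical set constructed in \cite{LifeOfPi}), this delivers $(3)$. The delicate part is balancing the two temperability requirements against the overlap conditions tightly enough to keep $\mathcal{V}$ locally finite while still capturing every hyperbolic measure, which is the reason for the fine choice of exponents in Definition \ref{littleQ}.
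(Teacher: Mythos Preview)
The paper does not prove this theorem: immediately after the statement it says ``This theorem is the content of \cite[Theorem~3.13]{SBO} (and similarly, the content of \cite[Theorem~4.16]{Sarig} when $d=2$)'' and treats it as a black box from the preliminaries. So there is no ``paper's own proof'' to compare against; the paper simply cites the result.

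Your proposal is a reasonable high-level sketch of the construction carried out in those cited works, and the broad architecture you describe (discretized double Pesin charts as vertices, $\epsilon$-overlap edges, graph-transform intersection for $\pi$, pigeonhole via the $\limsup$ condition for membership in $\Sigma^\#$) is the right one. A few points of caution if you intend this as more than a pointer: the discretization of basepoints is not simply ``a countable dense net in $M$ together with a discrete net in the Grassmannian'' but requires the coarse-graining lemma of \cite[\S3]{SBO} to ensure that nearby Pesin data can be matched to a \emph{single} alphabet letter while preserving admissibility; and the inverse coding does not land directly in the discretized alphabet $\mathcal{V}$ but first produces a chain in a pre-alphabet that must then be matched to $\mathcal{V}$ via the overlap theorem. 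These are the genuinely nontrivial steps in \cite{Sarig,SBO}, and your sketch glosses over them. But since the present paper only cites the result, a sketch at your level of detail is already more than what the paper provides.
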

This theorem is the content of \cite[Theorem~3.13]{SBO} (and similarly, the content of \cite[Theorem~4.16]{Sarig} when $d=2$). $\mathcal{V}$ is a collection of double Pesin-charts (see 
Definition \ref{PesinCharts}),
with the following discreteness property: Every $v\in\mathcal{V}$ is a double Pesin-chart of the form $v=\psi_x^{p^s,p^u}$ with $0<p^s,p^u\leq Q_\epsilon(x)$; and discreteness means that $\forall \eta>0:$ $\#\{v\in\mathcal{V}:v=\psi_x^{p^s,p^u} \text{ with }p^s\wedge p^u>\eta\}<\infty$. 

\begin{definition}\label{Doomsday}
\text{ }

\medskip
\begin{enumerate}
	\item $\forall u\in \mathcal{V}$, $Z(u):=\{\pi[\ul{u}]:\ul{u}\in\Sigma^\#, u_0=u\}$, $\mathcal{Z}:=\{Z(u):u\in\mathcal{V}\}$.
	\item $\mathcal{R}$ is a countable partition of $\bigcup\limits_{v\in\mathcal{V}}Z(v)=\pi[\Sigma^\#]$, s.t.
	\begin{enumerate}
	\item $\mathcal{R}$ is a refinement of $\mathcal{Z}$: $\forall Z\in\mathcal{Z},R\in\mathcal{R}$, $R\cap Z\neq\varnothing\Rightarrow R\subseteq Z$.
	\item $\forall v\in\mathcal{V}$, $\#\{R\in\mathcal{R}:R\subseteq Z(v)\}<\infty$ (\cite[\textsection~11]{Sarig}).
	\item The rectangles property: $\forall R\in\mathcal{R}$,$\forall x,y\in R$ $\exists ! z:=[x,y]_R\in R$, s.t. $\forall i\geq0, R(f^i(z))= R(f^i(y)), R(f^{-i}(z))= R(f^{-i}(x))$, where $R(t):=$the unique partition member of $\mathcal{R}$ which contains $t$, for $t\in\pi[\Sigma^\#]$.
	\item Symbolic Markov property: Let $R,S\in\mathcal{R}$, and let $x\in R\cap f^{-1}[S]$. Let $\ul{u}\in\Sigma^\#$ s.t. $\pi(\ul{u})=x$. Let $W^u(x,R):=\{y\in R:y\in V^u(\ul{u})\}$, $W^s(x,R):=\{y\in R:y\in V^s(\ul{u})\}$ (see \textsection \ref{foliage} for $V^s(\cdot),V^u(\cdot)$). Then $f^{-1}[W^u(f(x),S)]\subseteq W^u(x,R)$ and $f[W^s(x,R)]\subseteq W^s(f(x),S)$.
	\end{enumerate}
	\item $\forall R,S\in\mathcal{R}$, we say $R\rightarrow S$ if $R\cap f^{-1}[S]\neq\varnothing$. Let $\widehat{\mathcal{E}}=\{(R,S)\in\mathcal{R}^2\text{ s.t. }R\rightarrow S\}$.
	\item $\Sig:=\{\ul{R}\in\mathcal{R}^{\mathbb{Z}}: R_i\rightarrow R_{i+1},\forall i\in\mathbb{Z}\}$. This is the TMS associated to the graph $\widehat{\mathcal{G}}=(\mathcal{R},\widehat{\mathcal{E}})$.
\end{enumerate}	
\end{definition}
\noindent\textbf{Remarks:}
\begin{enumerate}
\item Given $\mathcal{Z}$, such a refining partition as $\mathcal{R}$ exists by the Bowen-Sinai refinement, see \cite[\textsection~11.1]{Sarig}.
\item Property (2)(d) makes $\mathcal{R}$ a {\em Markov partition}, in sense close to \cite{Si2,B1,AW}.
\item By property (2)(b), and since $\Sigma$ is locally-compact (see Theorem \ref{mainSBO}, local-finiteness of $\mathcal{G}$ implies local-compactness of $\Sigma$), $\Sig$ is also locally-compact.
 	
\end{enumerate}
\begin{definition}[$\text{\cite[\textsection 12.2,\textsection 12.3]{Sarig}}$]\label{sigmasharp}\text{ }

\medskip
\begin{enumerate}
    \item $\Sig^\#:=\{\ul{R}\in\Sig:\exists n_k,m_k\uparrow\infty\text{ s.t. }R_{n_k}=R_{n_0},R_{-m_k}=R_{-m_0},\forall k\geq0\}$.
    \item Two partition members $R,S\in \mathcal{R}$ 
are said to be {\em affiliated} if $\exists u,v\in\mathcal{V}$ s.t. $R\subseteq Z(u), S\subseteq Z(v)$ and $Z(u)\cap Z(v)\neq\varnothing$
.
\end{enumerate}
\end{definition}
\begin{claim}[Local finiteness of the cover $\mathcal{Z}$]\label{localfinito}
	$\forall Z\in\mathcal{Z}$,$\#\{Z'\in\mathcal{Z}:Z'\cap Z\neq\varnothing\}<\infty$.
\end{claim}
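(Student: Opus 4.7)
Fix $Z = Z(u) \in \mathcal{Z}$ with $u = \psi_x^{p^s, p^u} \in \mathcal{V}$, and set $\eta_0 := p^s \wedge p^u > 0$. My plan is to exhibit a constant $\eta = \eta(u) > 0$ such that every $v = \psi_y^{q^s,q^u} \in \mathcal{V}$ whose $Z(v)$ intersects $Z(u)$ satisfies $q^s \wedge q^u \geq \eta$; the claim then follows immediately from the discreteness of $\mathcal{V}$ noted in footnote~\ref{discreteness}, which says that only finitely many double Pesin-charts have $p^s \wedge p^u$ bounded below by any fixed positive number.

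\textbf{Step 1 (two codes for a common point).} Suppose $Z(u) \cap Z(v) \neq \varnothing$, and pick $z$ in the intersection. By the definition $Z(w) = \pi[[w]\cap \Sigma^\#]$, there exist $\underline{u}, \underline{v} \in \Sigma^\#$ with $u_0 = u$, $v_0 = v$, and $\pi(\underline{u}) = \pi(\underline{v}) = z$. Since $\pi$ intertwines $\sigma$ and $f$, both sequences are admissible codes for the entire orbit $(f^n z)_{n \in \mathbb{Z}}$, and both are recurrent (lie in $\Sigma^\#$). This is the input needed for the chart-comparison machinery of Sarig and its extension in \cite{Sarig,SBO}.

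\textbf{Step 2 (comparison of chart parameters).} I would invoke the comparison lemma from the symbolic dynamics construction (a standard ingredient of \cite[\textsection 4--5]{Sarig} and \cite[\textsection 3]{SBO}): if two double Pesin-charts $u = \psi_x^{p^s,p^u}$ and $v = \psi_y^{q^s,q^u}$ both occur as the $0$-th coordinate of recurrent coding sequences mapping to the same point $z$, then their parameters are comparable, e.g.
\[
\tfrac{q^s}{p^s},\ \tfrac{q^u}{p^u}\ \in\ [e^{-\epsilon/3}, e^{\epsilon/3}].
\]
The proof in the references proceeds by exploiting that $Q_\epsilon(f^n z)$ varies only slowly along the orbit (by $q_\epsilon\circ f/q_\epsilon = e^{\pm \epsilon}$ from Definition~\ref{temperable}) and that the admissibility conditions on chains force $p^s, p^u$ to match the most pessimistic $Q_\epsilon$ appearing in the forward/backward recurrent blocks, which is the same quantity for both chains. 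Once this comparison is granted, $q^s \wedge q^u \geq e^{-\epsilon/3}\eta_0$; setting $\eta(u) := e^{-\epsilon/3}\eta_0$, the set
\[
\{v \in \mathcal{V} : Z(v) \cap Z(u) \neq \varnothing\} \ \subseteq\ \{v = \psi_y^{q^s,q^u} \in \mathcal{V} : q^s \wedge q^u \geq \eta(u)\}
\]
is finite by discreteness of $\mathcal{V}$.

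\textbf{Main obstacle.} The routine bookkeeping is small; the real content is citing the correct comparison lemma. In Sarig's construction the relevant statement is packaged as the fact that the parameters of admissible chains are essentially uniquely determined by the underlying orbit, and its higher-dimensional analogue is proved in \cite{SBO}. One must verify that the variant of $\mathcal{V}$ used here (the modified construction from \cite[Definition~3.1]{LifeOfPi}) preserves this comparison property; this is the only place where subtlety could hide, since the chart anchors $x$ are not uniquely determined by $z$, but only up to a controlled distance, which is enough for the parameter bound above.
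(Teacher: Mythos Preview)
Your proposal is correct and follows exactly the approach of the cited references: the paper's own ``proof'' is merely a citation to \cite[Theorem~5.2]{SBO} (and \cite[Theorem~10.2]{Sarig} for $d=2$), and the argument there is precisely the one you sketch---two recurrent codings of a common point force comparable chart parameters, whence discreteness of $\mathcal{V}$ gives finiteness. Your identification of the comparison lemma as the only substantive step, and your remark that the modification from \cite{LifeOfPi} does not disturb it, are both accurate.
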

This claim is the content of \cite[Theorem~5.2]{SBO} (and similarly \cite[Theorem~10.2]{Sarig} when $d=2$).
\noindent\textbf{Remark:} 
By Claim \ref{localfinito} and Definition \ref{Doomsday}(2)(b), every partition member of $\mathcal{R}$ has only a finite number of partition members affiliated to it.

The coding $\pi:\Sigma\rightarrow M$ is usually $\infty$-to-one. Using $\mathcal{R}$, we can obtain a finite-to-one coding as follows.
\begin{theorem}\label{pizzahat}
	Given $\Sig$ from Definition \ref{Doomsday}, there exists a map $\widehat{\pi}:\Sig\rightarrow M$ s.t.
	\begin{enumerate}
	\item $f\circ\widehat{\pi}=\widehat{\pi}\circ\sigma$, where $\sigma$ denotes the left-shift on $\Sig$.
	\item $\widehat{\pi}$ is H\"older continuous w.r.t. the metric $d(\ul{R},\ul{S})=\exp\left(-\min\{i\geq0: R_i\neq S_i\text{ or }R_{-i}\neq S_{-i}\}\right)$.
	\item $\widehat{\pi}|_{\Sig^\#}$ is finite-to-one.
	\item $\forall \ul{R}\in \Sig$, $\widehat{\pi}(\ul{R})\in \overline{R_0}$.
	\item $\widehat{\pi}[\Sig^\#]$ carries all $\chi$-hyperbolic invariant probability measures.
	\end{enumerate}
\end{theorem}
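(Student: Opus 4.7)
The plan is to define $\widehat{\pi}$ via the intersection of shrinking cylinders of the partition $\mathcal{R}$, making essential use of the Markov property in Definition \ref{Doomsday}(2)(c). Given $\ul{R}\in\Sig$, I would first show inductively that $K_N(\ul{R}):=\bigcap_{n=-N}^{N} f^{-n}[R_n]$ is non-empty for every $N$: the admissibility $R_n\to R_{n+1}$ furnishes witnesses for adjacent intersections, and the Markov bracket $[\cdot,\cdot]_{R_0}$ glues such witnesses together into a point of $K_N$. Pick any $x_N\in K_N(\ul{R})$; by compactness of $M$ a subsequential limit exists, and I set $\widehat{\pi}(\ul{R})$ to be this limit.

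The second step is to upgrade the subsequence to a genuine limit by exhibiting exponential contraction of $\mathrm{diam}(K_N(\ul{R}))$. Each $R_i$ is contained in some $Z(u_i)$ coming from a double Pesin-chart $\psi_{u_i}^{p_i^s,p_i^u}$, so the stable and unstable directions at the relevant base points contract at rates bounded by $e^{-\chi}$ in Lyapunov norm via Theorem \ref{pesinoseledec}, and the refinement structure together with distortion control translates this into an estimate $\mathrm{diam}(K_N(\ul{R}))\leq Ce^{-\alpha N}$ for some $C,\alpha>0$. This simultaneously delivers Hölder continuity (1): if $\ul{R},\ul{S}$ agree on $|n|\leq N$, then $\widehat{\pi}(\ul{R}),\widehat{\pi}(\ul{S})\in K_N(\ul{R})$, so they lie within $Ce^{-\alpha N}$. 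Property (2) is immediate from $f[K_N(\ul{R})]\subseteq K_{N-1}(\sigma\ul{R})$, and property (4) holds because $x_N\in R_0$ forces the limit into $\overline{R_0}$.

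Property (3) amounts to showing that for $x\in\widehat{\pi}[\Sig^\#]$, the set $\widehat{\pi}^{-1}(x)\cap\Sig^\#$ is finite. If $\widehat{\pi}(\ul{R})=\widehat{\pi}(\ul{S})=x$, applying property (4) to every shift gives $f^i(x)\in\overline{R_i}\cap\overline{S_i}$. By Claim \ref{localfinito} and the remark after Definition \ref{Doomsday}, each $R_i$ admits only finitely many affiliated members of $\mathcal{R}$; hence each $S_i$ lies in a finite set depending only on $R_i$. Combined with the fact that $\pi|_{\Sigma^\#}$ is finite-to-one (Theorem \ref{mainSBO}) and that each admissible itinerary in $\Sig^\#$ lifts to a point in $\Sigma^\#$ via the double-chart encoding carried by each $R_i$, these finiteness facts yield the required bound on $|\widehat{\pi}^{-1}(x)\cap\Sig^\#|$.

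For property (5), let $\mu$ be any $\chi$-hyperbolic $f$-invariant probability measure. By Theorem \ref{mainSBO} we have $\mu(\pi[\Sigma^\#])=1$, and $\pi[\Sigma^\#]=\bigcupdot_{R\in\mathcal{R}} R$ is a countable Borel partition. For $\mu$-a.e.\ $x$ define the itinerary $R_i(x):=R(f^i(x))$; admissibility is automatic, and property (4) together with the uniqueness of $\bigcap_N K_N(\ul{R}(x))$ (from step two) gives $\widehat{\pi}(\ul{R}(x))=x$. Since $\mu$ is a probability measure on a countable partition, some $R\in\mathcal{R}$ has $\mu(R)>0$, and Poincar\'e recurrence for both $f$ and $f^{-1}$ forces $\mu$-a.e.\ $x\in R$ to return to $R$ infinitely often on each side, placing $\ul{R}(x)\in\Sig^\#$. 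The main obstacle will be the diameter estimate in step two: translating the partition-level Markov intersection into genuine Pesin-chart contraction requires careful control of the Lyapunov norms $\|C_\chi^{-1}(\cdot)\|$ along the orbit, which is precisely where Definition \ref{littleQ} and the weakly-temperable condition of Definition \ref{temperable} must be invoked to prevent the distortion constants from exploding as $|n|\to\infty$.
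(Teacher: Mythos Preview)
The paper does not prove this theorem; it simply records that ``This theorem is the content of the main theorem of [BO18], Theorem~1.1 (and similarly the content of [Sar13, Theorem~1.3] when $d=2$).'' So the ``paper's proof'' is a citation, and your proposal is really an outline of how the cited references proceed.

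Your overall strategy is close to the one in \cite{Sarig,SBO}, but the actual construction there is cleaner than your intersection-of-cylinders approach: one defines $\widehat{\pi}(\ul{R}):=\pi(\ul{u})$ for any covering chain $\ul{u}\CAR\ul{R}$ (whose existence is Lemma~\ref{nextstepcover} iterated), and checks well-definedness. This immediately inherits H\"older continuity from $\pi$ (Theorem~\ref{mainSBO}(2)), since if $\ul{R},\ul{S}$ agree on $[-N,N]$ one can choose lifts $\ul{u},\ul{v}$ that also agree on $[-N,N]$. In particular, your worry that the diameter estimate requires invoking the $\epsilon$-weakly-temperable condition is misplaced: the hyperbolic contraction is already packaged into the H\"older continuity of $\pi$, and you never need to revisit $\|C_\chi^{-1}(\cdot)\|$ directly.

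There is a genuine gap in your sketch of (3). Knowing that each $S_i$ lies in the finite set of elements affiliated to $R_i$ gives only finitely many choices \emph{per coordinate}, not finitely many sequences $\ul{S}$ in total. The actual argument in \cite[\S12]{Sarig} (and \cite{SBO}) uses the recurrence built into $\Sig^\#$: one fixes symbols $a,b$ with $R_{n_k}=a$, $R_{-m_k}=b$ infinitely often, and shows that any $\ul{S}\in\Sig^\#$ coding the same point is determined by the finite data of which affiliated element $S_i$ equals at a \emph{single} return time on each side, via an injectivity argument for the Markov structure between returns. Your appeal to ``$\pi|_{\Sigma^\#}$ is finite-to-one plus lifting'' does not close this, because a fixed $\ul{u}\in\Sigma^\#$ does not determine $\ul{R}$ (each $Z(u_i)$ contains several partition members), and conversely a fixed $\ul{R}$ admits many lifts $\ul{u}$.
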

This theorem is the content of the main theorem of [BO18], Theorem 1.1 (and similarly the content of [Sar13, Theorem 1.3] when $d = 2$).

\begin{theorem}[\cite{LifeOfPi}]\label{epsilonika}
	$\exists \epsilon_\chi>0$ which depends only on $M,f,\beta$ and $\chi$, s.t. $\forall \epsilon\in(0,\epsilon_\chi]$, $\HWT_\chi^\epsilon=\widehat{\pi}[\Sig^\#]$, where $\widehat{\pi}[\Sig^\#]= \widehat{\pi}[\Sig^\#(\epsilon)]$ depends on $\epsilon$, and is given by Theorem \ref{pizzahat}.
\end{theorem}

\begin{prop}\label{cafeashter}
$\widehat{\pi}[\Sig^\#]=\pi[\Sigma^\#]=\bigcupdot\mathcal{R}=\HWT_\chi$.
\end{prop}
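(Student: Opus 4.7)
The plan is to verify the chain of equalities in three steps, moving from immediate definitional content to the deeper canonical codability.

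First, $\pi[\Sigma^\#] = \bigcupdot \mathcal{R}$ is immediate from Definition \ref{Doomsday}(2), which \emph{defines} $\mathcal{R}$ as a countable partition of $\bigcup_{v\in\mathcal{V}} Z(v)$, and this set equals $\pi[\Sigma^\#]$ since $Z(u)=\pi[[u]\cap\Sigma^\#]$. For $\pi[\Sigma^\#] = \widehat{\pi}[\Sig^\#]$, I would translate codings in both directions. Given $\ul{u}\in\Sigma^\#$, set $x:=\pi(\ul{u})$ and $R_i:=R(f^i(x))$; admissibility $R_i\to R_{i+1}$ is automatic because $f^i(x)\in R_i\cap f^{-1}[R_{i+1}]$. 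Each $R_i\subseteq Z(u_i)$, and by Definition \ref{Doomsday}(2)(b) only finitely many partition members lie in each $Z(u)$, so the pigeonhole principle applied to the recurrent indices $n_k,-m_k$ of $\ul u$ forces $\ul R\in\Sig^\#$; property (4) of $\widehat\pi$ gives $\widehat{\pi}(\ul R)=x$. In the opposite direction, given $\ul R\in\Sig^\#$, Claim \ref{localfinito} and the finite-affiliation property allow lifting to an admissible sequence $\ul u\in\Sigma$ with $R_i\subseteq Z(u_i)$ and $\pi(\ul u)=\widehat\pi(\ul R)$; the recurrence of $\{R_{n_k}\}$ together with local finiteness of $\mathcal Z$ restricts the possible $u_{n_k}$ to a finite set, and a second pigeonhole argument yields $\ul u\in\Sigma^\#$.

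Third, for $\widehat{\pi}[\Sig^\#] = \HWT_\chi$, the forward inclusion is a direct verification: each $\widehat{\pi}(\ul R)$ inherits a splitting from the Pesin chart coordinates of $R_0$ and is $\chi$-summable because the diagonal blocks of the Pesin--Oseledec reduction along the orbit satisfy the uniform bounds of Theorem \ref{pesinoseledec}. To exhibit $\epsilon$-weak temperability, the natural candidate $q_\epsilon(f^n(\widehat\pi(\ul R))) := p^s_n\wedge p^u_n$ built from the chart parameters of the vertices affiliated to $R_n$ satisfies $q_\epsilon\circ f/q_\epsilon=e^{\pm\epsilon}$ by the admissibility condition in the construction of $\mathcal{V}$, satisfies $q_\epsilon\leq Q_\epsilon$ by the discreteness of $\mathcal{V}$ (see footnote \ref{discreteness}), and has $\limsup_{n\to\pm\infty} q_\epsilon(f^n\widehat\pi(\ul R))>0$ because $\ul R\in\Sig^\#$ forces recurrence to a fixed partition member (hence to finitely many vertices, hence to a positive lower bound on $p^s\wedge p^u$).

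The main obstacle is the reverse inclusion $\HWT_\chi \subseteq \widehat{\pi}[\Sig^\#]$, which is not automatic from Theorem \ref{mainSBO} alone: the classical statement only asserts that $\pi[\Sigma^\#]$ carries all hyperbolic invariant probability measures, whereas here we need a \emph{pointwise} codability statement for every hyperbolic weakly temperable point, without appealing to Lyapunov regularity or to an ambient invariant measure. This is precisely the content of the canonical codability result of \cite{LifeOfPi}, obtained via the modified construction from \cite[Definition~3.1]{LifeOfPi}, whose whole purpose is to arrange that the recurrent codings produced by shadowing capture exactly $\HWT_\chi$. Invoking that theorem closes the chain, and combined with the previous step finishes the proof.
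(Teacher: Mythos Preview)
Your proposal is essentially correct, and in fact is considerably more detailed than what the paper does: the paper's entire proof is a citation to \cite[Proposition~3.8, Corollary~3.9]{LifeOfPi}, with no argument given. Your decomposition into the three equalities is sound, and you correctly identify that the only substantive content is the inclusion $\HWT_\chi\subseteq\widehat{\pi}[\Sig^\#]$, which is the canonical codability theorem of \cite{LifeOfPi}.

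Two minor imprecisions are worth flagging. First, in Step~2, invoking ``property~(4) of $\widehat\pi$'' is not quite enough to conclude $\widehat\pi(\ul R)=x$: property~(4) only says $\widehat\pi(\ul R)\in\overline{R_0}$. What you really want is that $\ul R(\cdot)$ is inverse to $\widehat\pi|_{\Sig^\circ}$ (Definition~\ref{canonicparts} and the remark after it), which follows from the construction of $\widehat\pi$ as the unique intersection point of the stable and unstable leaves coded by $\ul R$. Second, in Step~3 forward inclusion, your appeal to Theorem~\ref{pesinoseledec} for $\chi$-summability is circular, since that theorem is stated for points already in $\chi$-summ. The correct argument derives the splitting and summability from the tangent spaces to $V^u(\ul u)$ and $V^s(\ul u)$ and their exponential contraction rates (as in \cite[Proposition~4.4]{SBO}); this is also part of what \cite{LifeOfPi} establishes, so in practice both inclusions of the last equality are black-boxed from that reference.
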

This is the content of \cite[Proposition~4.11, Corollary~4.12]{LifeOfPi}.

\subsection{Unstable Leaves of Maximal Dimension}\label{foliage}
\begin{definition}
An {\em unstable leaf} (of $f$) in $M$, $V^u$, is a $C^{1+\frac{\beta}{3}}$-regular, embedded, open, Riemannian submanifold of $M$, such that $\forall x,y\in V^u$, $\limsup\limits_{n\rightarrow\infty} \frac{1}{n}\log d(f^{-n}(x),f^{-n}(y))<0$.
Similarly, a {\em stable leaf} is an unstable leaf of $f^{-1}$.
\end{definition}
\begin{definition}\label{maximaldimensionleaves}
An unstable leaf is called {\em an unstable leaf of maximal dimension}, if it is not contained in any unstable leaf of a greater dimension.
\end{definition}
Notice that if $x\in \HWT_\chi$ belongs to an unstable leaf of maximal dimension $V^u$, then $\mathrm{dim}H^u(x)=\mathrm{dim}V^u$. This can be seen from the following claim.
\begin{claim}\label{chikfila}
$\forall \ul{u}\in \Sigma$, there exists an unstable leaf of maximal dimension $V^u(\ul{u})$, which depends only on $(u_i)_{i\leq0}$, and a stable leaf $V^s(\ul{u})$ of maximal dimension, which depends only on $(u_{i})_{i\geq0}$, s.t. $\{\pi(\ul{u})\}=V^u(\ul{u})\cap V^s(\ul{u})$.	
\end{claim}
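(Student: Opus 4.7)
The plan is to construct $V^u(\ul{u})$ and $V^s(\ul{u})$ by the classical graph-transform method on admissible manifolds in Pesin charts, in the form already developed in \cite{Sarig,SBO,LifeOfPi}. Admissibility in $\mathcal{G}$ forces the splitting dimensions $(s(u_i),u(u_i))$ to be constant along any $\ul{u}\in\Sigma$. For each vertex $w=\psi_x^{p^s,p^u}\in\mathcal{V}$, let $\mathcal{M}^u(w)$ denote the space of $u$-admissible manifolds in the chart of $w$: $C^{1+\beta/3}$ graphs of the form $\{(F(\eta),\eta):\eta\in\mathbb{R}^{u(x)},|\eta|\leq p^u\}$ read through the chart, with uniform control on $\|F\|_{C^{1+\beta/3}}$, $|F(0)|$ and $\|d_0F\|$. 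Define $\mathcal{M}^s(w)$ dually.

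The key ingredient is the graph-transform property: whenever $w\to w'$ is an admissible edge and $W\in\mathcal{M}^u(w)$, the connected component of $f(W)$ intersected with the chart of $w'$ containing the image of $0$ lies in $\mathcal{M}^u(w')$, and the induced map $\mathcal{F}_{w\to w'}:\mathcal{M}^u(w)\to\mathcal{M}^u(w')$ is a uniform contraction in a suitable $C^0$-metric, with factor bounded by $e^{-\chi+O(\epsilon)}$, coming from the bound $\|D_u^{-1}\|\leq e^{-\chi}$ in Theorem \ref{pesinoseledec} together with the $Q_\epsilon$-size control. For $\ul{u}\in\Sigma$ and $n\geq1$, pick any $W_{-n}\in\mathcal{M}^u(u_{-n})$ and set
\[
W_0^{(n)}:=\mathcal{F}_{u_{-1}\to u_0}\circ\cdots\circ\mathcal{F}_{u_{-n}\to u_{-n+1}}(W_{-n})\in\mathcal{M}^u(u_0).
\]
Contraction makes $\{W_0^{(n)}\}_n$ a Cauchy sequence, so it converges in $C^1$ to some $V^u(\ul{u})\in\mathcal{M}^u(u_0)$, and the limit depends neither on the choices of the $W_{-n}$ nor on the coordinates $(u_i)_{i>0}$; hence $V^u(\ul{u})$ depends only on $(u_i)_{i\leq0}$. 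Its dimension equals $u(u_0)$, which is maximal in the sense of Definition \ref{maximaldimensionleaves}, and backward contraction on $V^u(\ul{u})$ follows from the backward contraction at each finite stage. Applying the dual construction to $(u_i)_{i\geq0}$ produces $V^s(\ul{u})$ with the symmetric properties.

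For the intersection, the shadowing property of $\pi$ in Theorem \ref{mainSBO} identifies $\pi(\ul{u})$ as a point whose $f$-orbit shadows $(u_i)_{i\in\mathbb{Z}}$; a direct check from the construction shows that $f^{-n}(\pi(\ul{u}))$ lies on $W_{-n}$ for every $n$ and every choice of $W_{-n}$, so $\pi(\ul{u})\in V^u(\ul{u})$, and dually $\pi(\ul{u})\in V^s(\ul{u})$. At $\pi(\ul{u})$ the tangent spaces to $V^u(\ul{u})$ and $V^s(\ul{u})$ are uniformly transverse and span $T_{\pi(\ul{u})}M$, inherited from the angle bounds encoded by the double Pesin chart $u_0$ and propagated by the graph transform; combined with the uniform size bounds in the chart of $u_0$ this forces $V^u(\ul{u})\cap V^s(\ul{u})=\{\pi(\ul{u})\}$. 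The main technical obstacle is making all contraction and admissibility constants uniform for an arbitrary $\ul{u}\in\Sigma$, not only for $\ul{u}\in\Sigma^\#$; this is exactly what $\epsilon$-weak temperability (Definition \ref{temperable}) together with the scaling $Q_\epsilon$ (Definition \ref{littleQ}) are designed to provide, and the required uniform graph-transform estimates are already established in the cited works.
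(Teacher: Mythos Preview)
The paper does not prove this claim directly; it simply cites \cite[Proposition~3.12, Theorem~3.13, Proposition~4.4]{SBO} (and the two-dimensional analogues in \cite{Sarig}). Your sketch via graph transforms on admissible manifolds in Pesin charts is precisely the argument carried out in those references, so your overall approach is correct and matches the paper's intended proof.

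Two points deserve correction. First, your claim that ``$f^{-n}(\pi(\ul{u}))$ lies on $W_{-n}$ for every $n$ and every choice of $W_{-n}$'' is false: the seed $W_{-n}\in\mathcal{M}^u(u_{-n})$ is arbitrary, and $f^{-n}(\pi(\ul{u}))$ has no reason to lie on it. In the actual construction, $\pi(\ul{u})$ is \emph{defined} as the unique point of $V^u(\ul{u})\cap V^s(\ul{u})$ (transversality plus the size bounds in the chart of $u_0$ give uniqueness), and one then verifies the factor and shadowing properties of $\pi$ from this definition. So the logic is reversed from what you wrote.

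Second, your attribution of the uniformity to $\epsilon$-weak temperability is misplaced. Temperability concerns which points of $M$ are coded by $\Sigma^\#$; the uniform graph-transform estimates for \emph{any} $\ul{u}\in\Sigma$ come instead from the edge relation of $\mathcal{G}$ itself (the $\epsilon$-overlap condition between consecutive double Pesin charts, built into the definition of admissibility in \cite{Sarig,SBO}). Once $(u_i)$ is an admissible chain, the contraction constants are already uniform along it, with no appeal to temperability needed.
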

This is the content of \cite[Proposition~3.12,Theorem~3.13, Proposition~4.4]{SBO} (and similarly \cite[Proposition~4.15,Theorem~4.16,Proposition~6.3]{Sarig} when $d=2$). By construction, $V^s(\ul{u}),V^u(\ul{u})$ are local unstable leaves with finite induced Riemannian volume.
\begin{claim}
	$\forall \ul{u}\in\Sigma$, $f[V^s(\ul{u})]\subset V^s(\sigma\ul{u})$, $f^{-1}[V^u(\ul{u})]\subset V^u(\sigma^{-1}\ul{u})$.
\end{claim}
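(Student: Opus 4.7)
The plan is to unpack the construction of $V^s(\ul u)$ from \cite{Sarig, SBO}, where it is built inside the Pesin chart $\psi_{x_0}^{p^s}$ associated with $u_0 = \psi_{x_0}^{p^s, p^u}$ as the graph of a Lipschitz map from the stable Pesin subspace into the unstable subspace, realized as a decreasing intersection $V^s(\ul u) = \bigcap_{n \geq 0} f^{-n}[\mathcal{S}_n(\ul u)]$, where $\mathcal{S}_n(\ul u)$ is an ``admissible stable manifold'' in the Pesin chart of $u_n$ and admissibility is recorded by the edge relations $u_n \to u_{n+1}$ in $\mathcal{G}$. Crucially, $\mathcal{S}_n(\ul u)$ depends only on the coordinates $(u_k)_{k \geq n}$; this is what makes $V^s(\ul u)$ depend only on $(u_i)_{i \geq 0}$ in Claim \ref{chikfila}, and it makes $\mathcal{S}_{n+1}(\ul u) = \mathcal{S}_n(\sigma \ul u)$ for all $n \geq 0$.

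First I would take $y \in V^s(\ul u)$. The intersection characterization gives $f^n(y) \in \mathcal{S}_n(\ul u)$ for every $n \geq 0$, so for $m \geq 0$,
$$f^m(f(y)) = f^{m+1}(y) \in \mathcal{S}_{m+1}(\ul u) = \mathcal{S}_m(\sigma \ul u).$$
Hence $f(y) \in \bigcap_{m \geq 0} f^{-m}[\mathcal{S}_m(\sigma \ul u)] = V^s(\sigma \ul u)$, which proves the first inclusion.

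The second inclusion, $f^{-1}[V^u(\ul u)] \subset V^u(\sigma^{-1} \ul u)$, is dual: the construction of $V^u(\ul u)$ is the mirror image of the one above with $f$ replaced by $f^{-1}$ and the future tail replaced by the past tail $(u_i)_{i \leq 0}$. The same argument, applied to $f^{-1}$ and the shift $\sigma^{-1}$ (which sends past tails to past tails shifted by one), yields the inclusion.

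The main obstacle, in my view, is administrative rather than conceptual: one must verify that $f[V^s(\ul u)]$ actually lies inside the admissibility domain of the Pesin chart at $u_1$, so that the intersection description of $V^s(\sigma \ul u)$ genuinely captures it. This containment is guaranteed by the tempering relation between the chart parameters $p^s(u_0)$ and $p^s(u_1)$ built into the edge $u_0 \to u_1$ of $\mathcal{G}$ (via the bound on $Q_\epsilon$ in Definition \ref{littleQ}), together with the standard $C^{1+\beta}$-estimates on $f$ restricted to admissible graphs. The inclusion is generally strict since $V^s(\sigma \ul u)$ extends up to the boundary of its own chart while $f[V^s(\ul u)]$ is forced to lie well inside it, reflecting the one-sided nature of graph-transform contraction.
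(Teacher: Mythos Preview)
The paper does not prove this claim in-line; it simply cites \cite[Proposition~3.12]{SBO} (and \cite[Proposition~4.15]{Sarig} for $d=2$). Your sketch is in the right spirit and is close to what those references do, but the intersection formula as written is not how the construction goes. An ``admissible stable manifold'' $\mathcal{S}_n(\ul u)$ in the chart of $u_n$ has dimension $s(x)$, so the sets $f^{-n}[\mathcal{S}_n(\ul u)]$ are all $s(x)$-dimensional submanifolds; for their intersection to equal $V^s(\ul u)$ they would have to be nested, which forces $\mathcal{S}_n(\ul u)=V^s(\sigma^n\ul u)$ and makes the identity $\mathcal{S}_{n+1}(\ul u)=\mathcal{S}_n(\sigma\ul u)$ circular --- it is exactly the statement to be proved, shifted by $n$.

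The characterization that does support your shift argument uses full-dimensional boxes rather than $s$-manifolds; the paper itself invokes it in the proof of Lemma~\ref{properness}. Writing $u_n=\psi_{x_n}^{p_n^s,p_n^u}$,
\[
V^s(\ul u)=\{x\in\psi_{x_0}[R_{p_0^s}(0)]:\forall n\geq0,\ f^n(x)\in\psi_{x_n}[R_{10\sqrt{d}\,Q_\epsilon(x_n)}(0)]\}.
\]
With this in hand your paragraph beginning ``First I would take $y\in V^s(\ul u)$'' goes through verbatim once the boxes replace the $\mathcal{S}_n$'s, and the only remaining point is that $f(y)\in\psi_{x_1}[R_{p_1^s}(0)]$ --- precisely the administrative issue you correctly flagged, handled in \cite[Proposition~3.12]{SBO} by the graph-transform estimates attached to the edge $u_0\to u_1$. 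The cited references organize the argument slightly differently: they show that the graph transform $\mathcal{F}_s$ for the edge $u_0\to u_1$ satisfies $f[\mathcal{F}_s(W)]\subset W$ for every $s$-admissible $W$ in $u_1$, and then observe $V^s(\ul u)=\mathcal{F}_s(V^s(\sigma\ul u))$ since the limit of iterated graph transforms is independent of the seed. Either route works; yours just needs boxes in place of manifolds.
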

This is the content of \cite[Proposition~3.12]{SBO} (and similarly \cite[Proposition~4.15]{Sarig} when $d=2$).

\subsection{Ergodic Homoclinic Classes and Maximal Irreducible Components}\label{hertzetAl}

In this section we present the definition of certain invariant sets which correspond to topologically transitive components of the symbolic coding. These components are essential for the discussion on thermodynamic properties of the symbolic space (see \textsection \ref{lalipop}).

\begin{definition}
	Let $x\in \HWT_\chi$, and let $\ul{u}\in\Sigma^\#$ s.t. $
	\pi(\ul{u})=x$. The {\em global stable (respectively unstable)} manifold of $x$ is $W^s(x):=\bigcup_{n\geq0}f^{-n}[V^s(\sigma^n\ul{u})]$ {\em(respectively }$W^u(x):=\bigcup_{n\geq0}f^{n}[V^u(\sigma^{-n}\ul{u})]${\em)}.
\end{definition}
This definition is proper and is independent of the choice of $\ul{u}$, for more details on the size of the leaves $V^u(\cdot)$ under shift see \cite[Definition~2.23,Definition~3.2]{SBO}.

Let $p$ be a periodic point in $\chi\text{-}\mathrm{summ}$, i.e. hyperbolic periodic point. Since $p$ is periodic, $\|C^{-1}_\chi(\cdot)\|$ is bounded along the orbit of $p$, and therefore $p\in \HWT_\chi$.
\begin{definition}\label{homoclinicclass}The {\em ergodic homoclinic class} of $p$ (w.r.t. $\chi$) is
    $$H_\chi(p):=\left\{x\in \HWT\chi:W^u(x)\pitchfork W^s(o(p))\neq\varnothing,W^s(x)\pitchfork  W^u(o(p))\neq\varnothing\right\},$$
    where $\pitchfork$ denotes transverse intersections of full codimension, $o(p)$ is the (finite) orbit of $p$, and $W^{s/u}(\cdot)$ are the global stable and unstable manifolds of the point, respectively.
    \end{definition}
\noindent This notion was introduced in \cite{RodriguezHertz}[\textsection~1.1], with a set of Lyapunov regular points replacing $\HWT_\chi$. Every ergodic conservative $\chi$-hyperbolic measure is carried by an ergodic homoclinic class of some periodic hyperbolic point.
\begin{definition}\label{irreducibility}
Consider the partition $\mathcal{R}$ from Definition \ref{Doomsday}.
\begin{enumerate}
    \item Define $\sim\subseteq\mathcal{R}\times\mathcal{R}$ by $R\sim S\iff \exists n_{RS},n_{SR}\in\mathbb{N}\text{ s.t. } R\xrightarrow[]{n_{RS}}S,S\xrightarrow[]{n_{SR}}R$, i.e. a path of length $n_{RS}$ connecting $R$ to $S$, and a path of length $n_{SR}$ connecting $S$ to $R$. The relation $\sim$ is transitive and symmetric. When restricted to $\{R\in \mathcal{R}:R\sim R\}$, it is also reflexive, and thus an equivalence relation. Denote the corresponding equivalence class of some representative $R\in\mathcal{R}$, $R\sim R$, by $\langle R\rangle$.
    \item A {\em maximal irreducible component} in $\Sig$, corresponding to $R\in\mathcal{R}$ s.t. $R\sim R$, is $\{\ul{R}\in\Sig: \ul{R}\in\langle R\rangle^\mathbb{Z}\}$.
\end{enumerate}
\end{definition}
\begin{prop}\label{homoclinicirreducible}
Let $p$ be a periodic $\chi$-hyperbolic point. Then, there exists a maximal irreducible component, $\tilde{\Sigma}\subseteq\Sig$, s.t. $\widehat{\pi}[\tilde{\Sigma}^\#]= H_\chi(p)$ modulo all conservative measures, where $\tilde{\Sigma}^\#:=\{\ul{u}\in\tilde{\Sigma}:\exists v,w\text{ s.t. }\#\{i>0:u_i=v\}=\#\{i<0:u_i=w\}=\infty\}$.
\end{prop}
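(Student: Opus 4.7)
The plan is to realize $\tilde{\Sigma}$ as the irreducible component containing a symbolic coding of $p$, to verify both inclusions, and to identify the set where equality fails as a wandering set, which is negligible for every conservative measure. First I would pick $\ul{R}^{(p)}\in\Sig^\#$ with $\widehat{\pi}(\ul{R}^{(p)})=p$, which exists by Proposition \ref{cafeashter}. Since $p$ is periodic, one may arrange $\ul{R}^{(p)}$ to be periodic with period dividing $\mathrm{per}(p)$, so the symbol $R_0:=R_0^{(p)}\in\mathcal{R}$ satisfies $R_0\sim R_0$. I set $\tilde{\Sigma}:=\{\ul{R}\in\Sig:R_i\in\langle R_0\rangle\ \forall i\in\mathbb{Z}\}$, the maximal irreducible component containing $\ul{R}^{(p)}$ in the sense of Definition \ref{irreducibility}(2).

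For the forward inclusion $\widehat{\pi}[\tilde{\Sigma}^\#]\subseteq H(p)$, given $\ul{R}\in\tilde{\Sigma}^\#$ with $x:=\widehat{\pi}(\ul{R})$, I splice codings. Since $R_0\sim R_0^{(\ul{R})}$, an admissible path from $R_0^{(p)}$ to $R_0^{(\ul{R})}$ lets me form $\ul{Q}\in\Sig^\#$ whose past is eventually $\ul{R}^{(p)}$ and whose future (after a suitable shift) is eventually $\ul{R}$. By Claim \ref{chikfila}, $V^u$ depends only on the past and $V^s$ only on the future, so $y:=\widehat{\pi}(\ul{Q})$ lies in a local unstable leaf of some $f^j(p)$, hence in $W^u(o(p))$, and in a local stable leaf of $x$, hence in $W^s(x)$. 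Because $H^s(y)\oplus H^u(y)=T_yM$ and the leaves are of maximal dimension, the intersection is transverse of full codimension. A symmetric splice produces a transverse $z\in W^s(o(p))\cap W^u(x)$, so $x\in H(p)$.

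For the reverse inclusion $H(p)\subseteq\widehat{\pi}[\tilde{\Sigma}^\#]$ modulo conservative measures, given $x\in H(p)$, I fix transverse intersections $y\in W^u(o(p))\cap W^s(x)$, $z\in W^s(o(p))\cap W^u(x)$ and codings $\ul{R}^{(x)},\ul{R}^{(y)},\ul{R}^{(z)}\in\Sig^\#$ via Proposition \ref{cafeashter}. The crucial symbolic lemma I need is: if two points in $\HWT_\chi$ share a local stable leaf, then their codings eventually agree in the forward direction (symmetrically for unstable). This follows because $\mathcal{R}$ refines $\mathcal{Z}$, whose atoms are saturated by the relevant leaves, and stable leaves contract exponentially forward. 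Applied to $(x,y)$ and $(x,z)$, it gives $R_n^{(x)}=R_n^{(y)}$ and $R_{-n}^{(x)}=R_{-n}^{(z)}$ for all large $n$. Because the past of $y$ shadows that of $p$ and the future of $z$ shadows that of $p$, the same lemma yields $R_{-n}^{(y)},R_{n}^{(z)}\in\langle R_0\rangle$ for large $n$. Transitivity of $\sim$ along the admissible sequence $\ul{R}^{(x)}$, using the recurrent symbols of $\ul{R}^{(x)}\in\Sig^\#$ as return loops, then promotes $R_i^{(x)}\in\langle R_0\rangle$ to every $i\in\mathbb{Z}$, giving $\ul{R}^{(x)}\in\tilde{\Sigma}^\#$.

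The hard part will be the symbolic matching ``same local leaf implies equal symbols asymptotically'', which relies on the explicit Bowen--Sinai construction of $\mathcal{R}$ and on the local finiteness of $\mathcal{Z}$ (Claim \ref{localfinito}) to rule out boundary pathology and to restrict the finite ambiguity of the coding bridge. Points where this matching or the choice of coding is ambiguous lie on a countable union of $f$-invariant boundaries of atoms of $\mathcal{R}$; this set is wandering in $M$, hence null for every conservative Borel measure, which is precisely the ``modulo conservative measures'' qualifier in the statement.
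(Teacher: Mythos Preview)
The paper does not prove this proposition; it cites \cite[Theorem~4.10]{LifeOfPi}. Your forward inclusion via splicing is essentially correct and matches the standard argument. The reverse inclusion, however, has a genuine gap.

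Your ``crucial symbolic lemma''---that two points of $\HWT_\chi$ on the same local stable leaf have codings that eventually agree in the forward direction---is false as stated, and your justification does not hold: the atoms of $\mathcal{Z}$ are \emph{not} saturated by local stable or unstable leaves, so exponential contraction along a stable leaf gives only closeness, not equality of partition elements. What survives is the weaker conclusion that $R(f^n(x))$ and $R(f^n(y))$ are eventually \emph{affiliated} (Definition~\ref{sigmasharp}(2)), via the local finiteness of $\mathcal{Z}$. Affiliation is a spatial relation (overlap of containing $Z$-atoms), not a dynamical one, and does not by itself place two symbols in the same $\sim$-class. The missing step is a bridge from affiliation to $\sim$-equivalence: one needs the Inclination Lemma and the local product structure of $\mathcal{R}$ to manufacture an actual admissible path in the graph between a recurrent symbol of $\ul{R}(x)$ and $R_0^{(p)}$. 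This is where the conservativity hypothesis is truly used---recurrence of the orbit of $x$ to a fixed element $R$ allows one to take Smale brackets in $R$ with the heteroclinic point and produce the required path.

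Separately, your closing paragraph is confused: you call the exceptional set a ``countable union of $f$-invariant boundaries'' and then claim it is wandering. An $f$-invariant set is by definition not wandering. The ``modulo conservative measures'' clause does not come from any boundary structure; it comes from Halmos recurrence, which guarantees that for any conservative measure, a.e.\ point has the recurrence properties needed to run the bracket argument above.
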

This is the content of \cite[Theorem~5.9]{LifeOfPi}, and is based on the result of Buzzi, Crovisier and Sarig in \cite{BCS} for homoclinic classes of the type of Newhouse \cite{NewhousePeriodicEquivalenceRelation}, and the $f$-invariant, $\chi$-hyperbolic, probability measures which they carry.

\medskip
\subsection{The Canonical Part of the Symbolic Space}\label{CanoCodi}
In this section we introduce a dense invariant subset of the symbolic space, whose image covers the union of the Markov partition in a {\em one-to-one} way.
Later, we will use this set to define a collection of absolutely continuous leaf measures with a good transformation law for the action of $f$, see section \ref{ProofOutline}, step 3.
%
%

\begin{definition} Let $-\mathbb{N}=(\ldots,-2,-1,0)$ and set
\begin{align*}\Sig_L:=&\{(R_i)_{i\leq0}\in \mathcal{R}^{-\mathbb{N}}:\forall i\leq 0, R_{i-1}\rightarrow R_i\},\text{ }\sigma_R:\Sig_L\rightarrow\Sig_L,\sigma_R((R_i)_{i\leq0})=(R_{i-1})_{i\leq0}.
\end{align*}
\end{definition}
Notice, $\sigma_R$ is the right-shift, not the left-shift $\sigma$. In order to prevent any confusion, we will always notate $\sigma_R$ with a subscript $R$ (for ``right"), when considering the right-shift.

\begin{definition}[{\em The canonical coding} $\ul{R}(\cdot)$]
$$\forall x\in \pi[\Sigma^\#]=\bigcupdot\mathcal{R}\text{, }(\ul{R}(x))_i:=R(f^i(x)), i\in\mathbb{Z}.$$
\end{definition}

\begin{definition}\label{chikfilb}\label{unstabledisconnected} $$\forall \ul{R}\in\Sig_L,\text{ }W^u(\ul{R}):=\bigcap_{j=0}^\infty f^j[R_{-j}].$$
\end{definition}
One should notice that $\widehat{\pi}(\ul{R}(x))=x,W^u(\ul{R}(x))\ni x$, and so $W^u(\ul{R}(x))\neq\varnothing$.

We have the following very important property:
\begin{cor}\label{decomposition} $\forall \ul{R}\in\Sig_L$, $$f[W^u(\ul{R})]=\bigcupdot_{\sigma_R\ul{S}=\ul{R}}W^u(\ul{S}).$$
\end{cor}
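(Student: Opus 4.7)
The plan is to unwind both sides via the definitions and then use (i) the Markov property to match the union, and (ii) the fact that $\mathcal{R}$ partitions $\pi[\Sigma^\#]$ to get disjointness and surjectivity.

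First I would translate $\sigma_R\ul{S}=\ul{R}$ into a concrete description of $\ul{S}$. Since $(\sigma_R\ul{S})_i=S_{i-1}$ for $i\le 0$, the equation forces $S_i=R_{i+1}$ for every $i\le -1$, while $S_0\in\mathcal{R}$ is free except for admissibility $S_{-1}=R_0\to S_0$. Thus the preimages of $\ul{R}$ under $\sigma_R$ are parametrized exactly by $\{S_0\in\mathcal{R}:R_0\to S_0\}$.

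Next I would compute $W^u(\ul{S})$ for such an $\ul{S}$ by peeling off the $j=0$ term and reindexing:
\begin{align*}
W^u(\ul{S})=\bigcap_{j=0}^\infty f^j[S_{-j}]=S_0\cap \bigcap_{j=1}^\infty f^j[R_{-(j-1)}]=S_0\cap f\!\left[\bigcap_{k=0}^\infty f^k[R_{-k}]\right]=S_0\cap f[W^u(\ul{R})].
\end{align*}
Taking the union over admissible $S_0$ then gives
\begin{align*}
\bigcup_{\sigma_R\ul{S}=\ul{R}}W^u(\ul{S})=f[W^u(\ul{R})]\cap\bigcup_{S_0:\,R_0\to S_0}S_0,
\end{align*}
and this union is automatically disjoint because the $S_0$'s are distinct atoms of the partition $\mathcal{R}$.

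It then remains to show $f[W^u(\ul{R})]\subseteq\bigcup_{S_0:\,R_0\to S_0}S_0$; this is the only step requiring attention, though it is light. Since $W^u(\ul{R})\subseteq R_0\subseteq \pi[\Sigma^\#]$ and $\pi[\Sigma^\#]$ is $f$-invariant (it equals $\bigcupdot\mathcal{R}$ by Proposition \ref{cafeashter}, and $f\circ\pi=\pi\circ\sigma$), we have $f[W^u(\ul{R})]\subseteq\bigcupdot\mathcal{R}$, so every $y=f(x)\in f[W^u(\ul{R})]$ with $x\in R_0$ lies in a unique $S_0\in\mathcal{R}$; the inclusion $x\in R_0\cap f^{-1}[S_0]$ precisely witnesses $R_0\to S_0$ in the definition of $\widehat{\mathcal{E}}$. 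Combining this with the previous display yields the equality $f[W^u(\ul{R})]=\bigcupdot_{\sigma_R\ul{S}=\ul{R}}W^u(\ul{S})$. The main (minor) obstacle is keeping the index bookkeeping for $\sigma_R$ straight, since it is the right-shift rather than the more familiar left-shift.
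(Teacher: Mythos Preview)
Your proof is correct and essentially identical to the paper's: both hinge on the set-theoretic identity $W^u(\ul{S})=S_0\cap\bigcap_{j\ge 0}f^{j+1}[R_{-j}]=S_0\cap f[W^u(\ul{R})]$ together with the inclusion $f[R_0]\subseteq\bigcupdot_{R_0\to S}S$, which the paper asserts ``by definition'' and you justify via the $f$-invariance of $\pi[\Sigma^\#]=\bigcupdot\mathcal{R}$. The only cosmetic difference is order of operations---the paper computes $f[W^u(\ul{R})]$ first and then splits it over $S$, whereas you compute each $W^u(\ul{S})$ first and then take the union.
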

\begin{proof} Since $f$ is a diffeomorphism,
 \begin{align*}
     f[W^u(\ul{R})]=&f[\bigcap_{j=0}^\infty f^j[R_{-j}]]=\bigcap_{j=0}^\infty f^{j+1}[R_{-j}]=f[R_0]\cap\bigcap_{j=1}^\infty f^{j+1}[R_{-j}]\\
     =&
     (\bigcupdot_{R_0\rightarrow S}S)\cap f[R_0]\cap\bigcap_{j=1}^\infty f^{j+1}[R_{-j}]=\bigcupdot_{R_0\rightarrow S}(S\cap\bigcap_{j=0}^\infty f^{j+1}[R_{-j}])=\bigcupdot_{\sigma_R\ul{S}=\ul{R}}W^u(\ul{S}),
 \end{align*}
 where the transition from the top equation to the bottom one, is due to the fact that $f[R_0]\subseteq\bigcupdot_{R_0\rightarrow S}S$ by definition, whence $f[R_0]=f[R_0]\cap\bigcupdot_{R_0\rightarrow S}S$
.

\end{proof}

\begin{definition}\label{canonicparts} Let
$$\Sig^\circ:=\{\ul{R}\in\Sig:\forall n\in\mathbb{Z}, f^n(\widehat{\pi}(\ul{R}))\in R_n\},$$
$$\Sig^\circ_L:=\{\ul{R}\in\Sig^\#_L:W^u(\ul{R})\neq\varnothing\},$$
where $\Sig^\#_L:=\{(R_i)_{i\leq 0}:(R_i)_{i\in\mathbb{Z}}\in \Sig^\#\}$.
We call $\Sig^\circ,\Sig^\circ_L$ {\em the canonical parts of the respective symbolic spaces}.
\end{definition}

Notice that $\ul{R}(\cdot)$ is the inverse of $\widehat{\pi}|_{\Sig^\circ}$, and $\ul{R}(x)\in \Sig^\circ$ for all $x\in\bigcupdot\mathcal{R}$.

\noindent\textbf{Remark:} One should notice that $\Sig^\circ\subseteq\Sig^\#$, $\Sig_L^\circ\subseteq\Sig^\#_L$. This can be seen as follows:
If $\ul{R}\in \Sig^\circ$, then $\widehat{\pi}(\ul{R})\in\bigcupdot\mathcal{R}=\pi[\Sigma^\#]$. Take any $\ul{u}\in \Sigma^\#$ s.t. $\pi(\ul{u})=\widehat{\pi}(\ul{R})\equiv x$, then $Z(u_i)\supseteq R(f^i(x))$, $\forall i\in\mathbb{Z}$, whence by the local-finiteness of the refinement and the pigeonhole principle, $\ul{R}(x)\in \Sig^\#$. In addition, since $\Sig^\circ_L=\tau[\Sig^\circ]$ (where $\tau$ is the projection to the non-positive coordinates), and $\Sig^\#_L=\tau[\Sig^\#]$, we get $\Sig_L^\circ\subseteq\Sig^\#_L$.

Next, since every admissible cylinder contains a point in $\Sig^\circ$ (\cite[Lemma~12.1]{Sarig}), we get that $\Sig^\circ$ is a dense invariant subset (Corollary \ref{imagecanonic} shows in addition that its image under $\widehat{\pi}$ covers the Markov partition elements). Thus, $\Sig^\circ_L$ is dense $\Sig_L$; and for every $\ul{R}\in \Sig^\circ_L,\ul{S}\in\Sig_L$ s.t. $\sigma_R\ul{S}=\ul{R}$, the Markov property tells us there is a point in $W^u(\ul{S})$- whence $\Sig^\circ_L$ is also invariant. 
\begin{cor}\label{imagecanonic}
$$\widehat{\pi}[\Sig^\circ]=\widehat{\pi}[\Sig^\#]=\pi[\Sigma^\#]=\bigcupdot\mathcal{R}.$$
\end{cor}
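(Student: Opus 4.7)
The latter two equalities $\widehat{\pi}[\Sig^\#] = \pi[\Sigma^\#] = \bigcupdot\mathcal{R}$ are a direct restatement of Proposition \ref{cafeashter}, so nothing new needs to be proved for them. The inclusion $\widehat{\pi}[\Sig^\circ] \subseteq \widehat{\pi}[\Sig^\#]$ is immediate from $\Sig^\circ \subseteq \Sig^\#$, which was recorded in the Remark following Definition \ref{canonicparts}. The only substantial content is therefore the reverse inclusion $\widehat{\pi}[\Sig^\#] \subseteq \widehat{\pi}[\Sig^\circ]$.

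For that inclusion I would fix $x \in \widehat{\pi}[\Sig^\#] = \bigcupdot\mathcal{R}$ and exhibit an explicit preimage in $\Sig^\circ$, namely the canonical coding $\ul{R}(x) := (R(f^n(x)))_{n\in\mathbb{Z}}$. Admissibility is witnessed by $x$ itself: $f^n(x) \in R(f^n(x)) \cap f^{-1}[R(f^{n+1}(x))]$ shows $R(f^n(x))\to R(f^{n+1}(x))$ for every $n$, so $\ul{R}(x)\in\Sig$; and the defining relation $f^n(x) \in (\ul{R}(x))_n$ for all $n$ is exactly the requirement placing $\ul{R}(x)$ in $\Sig^\circ$. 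It then suffices to verify the identity $\widehat{\pi}(\ul{R}(x)) = x$, for then $x \in \widehat{\pi}[\Sig^\circ]$.

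To prove this identity, set $y:=\widehat{\pi}(\ul{R}(x))$ and apply property (4) of the factor map to every shifted sequence $\sigma^n \ul{R}(x)$, using the equivariance $f\circ\widehat{\pi}=\widehat{\pi}\circ\sigma$, to conclude $f^n(y) \in \overline{R(f^n(x))}$ for every $n\in\mathbb{Z}$, while $f^n(x)\in R(f^n(x))$ by the very definition of the canonical coding. Thus both $x$ and $y$ are captured by the same sequence of closed Markov blocks along their full two-sided orbits, and I would invoke the Remark after Definition \ref{canonicparts} (that $\ul{R}(\cdot)$ is the inverse of $\widehat{\pi}|_{\Sig^\circ}$) to conclude $x = y$. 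This last uniqueness assertion is the main obstacle: it is not combinatorially obvious from the statement of Theorem \ref{mainSBO} alone and must be traced back to the construction in \cite{SBO,Sarig}, where each $R\in\mathcal{R}$ sits inside some $Z(v)$ coming from a Pesin chart of size $Q_\epsilon$, so the diameter of $\overline{R(f^n(x))}$ is controlled in the stable direction for $n\to+\infty$ and in the unstable direction for $n\to-\infty$ along the weakly temperable orbit of $x$; the intersection $\bigcap_{n\in\mathbb{Z}} f^{-n}[\overline{R(f^n(x))}]$ is therefore a single point, forcing $x=y$ and completing the reverse inclusion.
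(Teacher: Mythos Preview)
Your proof is correct and follows the same route as the paper: use $\Sig^\circ\subseteq\Sig^\#$ for one inclusion, Proposition~\ref{cafeashter} for the middle equalities, and the canonical coding $\ul{R}(x)$ to produce a $\Sig^\circ$-preimage of each $x\in\bigcupdot\mathcal{R}$. The only difference is that you expend effort justifying $\widehat{\pi}(\ul{R}(x))=x$ via property~(4) and a shrinking-intersection argument, whereas the paper simply cites this identity as already recorded (immediately after the definition of the canonical coding, and again in the remark after Definition~\ref{canonicparts}); in particular, once you invoke that remark you have the identity directly and the preceding discussion of $f^n(y)\in\overline{R(f^n(x))}$ becomes redundant.
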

\begin{proof}
In the remark after Definition \ref{canonicparts} we saw that $\Sig^\circ\subseteq\Sig^\#$. In addition, $\pi[\Sigma^\#]\subseteq \widehat{\pi}[\Sig^\circ]$ because for any $x\in\pi[\Sigma^\#]=\bigcupdot\mathcal{R}$, $\ul{R}(x)\in\Sig^\circ$ and $\widehat{\pi}(\ul{R}(x))=x$. 
In total, by Proposition \ref{cafeashter},
$$\widehat{\pi}[\Sig^\circ]\subseteq\widehat{\pi}[\Sig^\#]=\bigcupdot\mathcal{R}=\pi[\Sigma^\#]\subseteq\widehat{\pi}[\Sig^\circ].$$
\end{proof}

\section{Space of Absolutely Continuous Leaf Measures}\label{leavesandmeasures}
\subsection{Unstable Leaves and Smooth Measures}\label{smoothies}
In this section we give a definition for unstable leaves of chains in $\Sig$. This is different from  the unstable leaves from \textsection \ref{foliage}, which were parameterized by  chains in $\Sigma$. The main challenge in doing this, is that unlike the elements of $\Sigma$, chains in $\Sig$ are not made of (double) Pesin charts. Therefore, they do not come equipped with a canonical choice of a local domain for their unstable leaf. We show how to choose the domain in such a way that the resulting family of leaves has the following  properties:
(1) $f^{-1}[V^u(\ul{R})]\subseteq V^u(\sigma_R\ul{R})$; and (2) The map $\ul{R}\mapsto V^u(\ul{R})$ has good continuity properties. 
We need these properties for the proofs of  Theorem \ref{naturalmeasures} and Lemma \ref{0.1}.

\begin{lemma}\label{nextstepcover}
If $R_0\rightarrow R_1$, where $R_0,R_1\in\mathcal{R}$, and $R_0\subseteq Z(u_0)$, $u_0\in\mathcal{V}$ then $\exists u_1\in\mathcal{V}$ s.t. $u_0\rightarrow u_1$ and $R_1\subseteq Z(u_1)$. Similarly, if $R_0\rightarrow R_1$ and $R_1\subseteq Z(v_1)$, then $\exists v_0$ s.t. $v_0\rightarrow v_1$ and $R_0\subseteq Z(v_0)$.
\end{lemma}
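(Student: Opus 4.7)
The plan is to pass from the Markov partition $\mathcal{R}$ back to the original symbolic cover $\mathcal{Z}$ via the factor map $\pi\colon\Sigma\to M$, and then use the refinement property of $\mathcal{R}$ (Definition \ref{Doomsday}(2)(a)) to conclude. First I would unpack the hypothesis: the relation $R_0\to R_1$ together with Definition \ref{Doomsday}(3) yields a point $x\in R_0\cap f^{-1}[R_1]$. The containment $R_0\subseteq Z(u_0)=\pi[[u_0]\cap\Sigma^{\#}]$ then guarantees that $x$ admits a symbolic representative $\underline{u}\in\Sigma^{\#}$ with $\pi(\underline{u})=x$ and $u_0$ appearing at the $0$-th coordinate.

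Next I would apply shift invariance of $\Sigma^{\#}$ and the equivariance $\pi\circ\sigma=f\circ\pi$ from Theorem \ref{mainSBO}(1): the point $\sigma\underline{u}\in\Sigma^{\#}$ satisfies $\pi(\sigma\underline{u})=f(x)\in R_1$, and its $0$-th coordinate $u_1:=(\sigma\underline{u})_0$ is, by admissibility of $\underline{u}$, a vertex of $\mathcal{G}$ with $u_0\to u_1\in\mathcal{E}$. In particular $f(x)\in Z(u_1)$, so $R_1\cap Z(u_1)\neq\varnothing$. Since $\mathcal{R}$ refines $\mathcal{Z}$, this forces $R_1\subseteq Z(u_1)$, proving the forward direction.

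The backward statement is proved by the same device with the roles of past and future swapped. Starting from $R_0\to R_1$ I again choose $x\in R_0\cap f^{-1}[R_1]$, so $f(x)\in R_1\subseteq Z(v_1)$. Pick a symbolic representative $\underline{w}\in\Sigma^{\#}$ with $\pi(\underline{w})=f(x)$ and $w_0=v_1$; then $\sigma^{-1}\underline{w}\in\Sigma^{\#}$ has $\pi(\sigma^{-1}\underline{w})=x$ and $0$-th coordinate $v_0:=w_{-1}$, which by admissibility satisfies $v_0\to v_1$. Consequently $x\in R_0\cap Z(v_0)$, and the refinement property yields $R_0\subseteq Z(v_0)$.

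The only non-routine point is the initial lift of $x$ to $\Sigma^{\#}$ with a prescribed $0$-th coordinate, and this is precisely what is encoded in the definition $Z(u)=\pi[[u]\cap\Sigma^{\#}]$; once this is in hand, the lemma is essentially a bookkeeping translation between the combinatorics of $\mathcal{G}$ (which governs the cover $\mathcal{Z}$) and the combinatorics of $\widehat{\mathcal{E}}$ (which governs the refinement $\mathcal{R}$). I do not anticipate any genuine obstacle beyond making sure these identifications are done within $\Sigma^{\#}$ rather than all of $\Sigma$, so that the containment $R_i\subseteq Z(u_i)$ is actually inherited from the refinement clause in Definition \ref{Doomsday}.
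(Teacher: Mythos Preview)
Your proof is correct and matches the paper's argument essentially step for step: pick $x\in R_0\cap f^{-1}[R_1]$, lift it to $\underline{u}\in\Sigma^\#$ with zeroth symbol $u_0$, shift, and invoke the refinement property. The paper does not spell out the backward direction (it just says ``similar''), whereas you do; otherwise there is no difference.
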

\begin{proof}
Let $x\in f^{-1}[R_1]\cap R_0$ (exists by definition since $R_0\rightarrow R_1$). By assumption, $x\in Z(u_0)$, hence $\exists \tilde{\ul{u}}\in\Sigma^\#$ s.t. $\tilde{u}_0=u_0$ and $\pi(\tilde{\ul{u}})=x$. Define $u_1:=\tilde{u}_1$; thus $f(x)=\pi(\sigma\tilde{\ul{u}})$. Hence $f(x)\in R_1\cap Z(\tilde{u}_1)=R_1\cap Z(u_1)$. Since $R_1\cap Z(u_1)\neq\varnothing$, and $\mathcal{R}$ refines $\mathcal{Z}$, $R_1\subseteq Z(u_1)$. The proof of the second part is similar.
\end{proof}
\begin{definition}
Given $\underline{R}\in \Sig_L$ we say that $\underline{u}\in\Sig_L$ {\em covers} $\underline{R}$, if $R_i\subseteq Z(u_i)$ for all $i\leq0$. We write $\ul{u}\CAR \ul{R}$.
\end{definition}
By Lemma \ref{nextstepcover}, every $\underline{R}\in \Sig_L$, is covered by some $\ul{u}\in\Sig_L$.

\begin{definition}\label{unstablemanifold} Given $R\in\mathcal{R}$, define $W(R):=\bigcap\{\psi_{x_0}[R_{p_0^u}(0)]:\ul{u}\CAR{}\ul{R}, u_i=\psi_{x_i}^{p_i^s,p_i^u},R_0=R\}=\bigcap\{\psi_{x_0}[R_{p_0^u}(0)]:Z(u)\supseteq R,u=\psi_{x_0}^{p_0^s,p_0^u}\}$, where $R_{p_0^u}(0)$ is the open $|\cdot|_\infty$-ball (box) around $0$ of radius $p_0^u$.

\medskip

Given a chain $\underline{R}\in \Sig_L$, we define $V^u(\ul{R}):=W(R_0)\cap V^u(\ul{u})$ for some (any) $\ul{u}\CAR{}\ul{R}$.
\end{definition}
\noindent The equality in the first part of Definition \ref{unstablemanifold} is since Lemma \ref{nextstepcover} implies that for every $\ul{R}$ s.t. $R_0=R$, and every $u$ s.t. $Z(u)\supseteq R$, $u$ can be extended to a chain $\ul{u}$ s.t. $\ul{u}\CAR\ul{R}$.

\begin{lemma}\label{properness} Definition \ref{unstablemanifold} is proper: $V^u(\ul{R})$ is independent of the choice of $\ul{u}$.
\end{lemma}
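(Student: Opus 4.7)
The plan is to establish that for any two covering chains $\ul{u}, \ul{u}' \CAR \ul{R}$, the sets $W(R_0) \cap V^u(\ul{u})$ and $W(R_0) \cap V^u(\ul{u}')$ coincide. By symmetry it suffices to prove the inclusion $W(R_0) \cap V^u(\ul{u}) \subseteq V^u(\ul{u}')$, so I would fix $y$ in the left-hand side and aim to show $y \in V^u(\ul{u}')$.

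The key tool I would invoke is the uniqueness of maximal-dimension unstable leaves through a single hyperbolic point: at any point $y$ possessing such a leaf, the tangent direction is forced to be $H^u(y)$, and the leaf itself is determined on a neighborhood by the forward-contraction requirement, as a fixed point of the graph transform. Consequently, once $V^u(\ul{u}')$ is shown to pass through $y$, this uniqueness identifies $V^u(\ul{u})$ and $V^u(\ul{u}')$ on a neighborhood of $y$, yielding $y \in V^u(\ul{u}')$ immediately.

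To show $y \in V^u(\ul{u}')$, I would leverage the shift-invariance $f^{-1}[V^u(\ul{u})] \subseteq V^u(\sigma^{-1}\ul{u})$ stated just after Claim \ref{chikfila}: this gives $f^{-n}(y) \in V^u(\sigma^{-n}\ul{u})$ for every $n \geq 0$, represented as a graph in the Pesin chart $\psi_{x_{-n}}$ of bounded slope and diameter at most $p_{-n}^u$. Since $\ul{u}$ and $\ul{u}'$ both cover $\ul{R}$, the charts $u_{-n}$ and $u_{-n}'$ are affiliated, because $R_{-n}\subseteq Z(u_{-n})\cap Z(u_{-n}')$; invoking the admissible-manifold overlap results from \cite{Sarig, SBO} identifies the graph $V^u(\sigma^{-n}\ul{u})$ inside the chart $\psi_{x_{-n}'}$ with a piece of the admissible manifold defining $V^u(\sigma^{-n}\ul{u}')$ on the common admissible domain. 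Hence $f^{-n}(y) \in V^u(\sigma^{-n}\ul{u}')$ for every $n\geq 0$, and applying $f^n$ with $n=0$ gives $y \in V^u(\ul{u}')$.

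The main obstacle is the precise transfer of graphs between affiliated Pesin charts: one must verify that, on the overlap of admissible domains, the graph $V^u(\sigma^{-n}\ul{u})$ read in the chart $\psi_{x_{-n}'}$ is a piece of an admissible manifold with respect to the $u_{-n}'$ data, and therefore coincides, by graph-transform uniqueness, with the corresponding piece of $V^u(\sigma^{-n}\ul{u}')$. This is the standard compatibility argument in the Pesin-chart symbolic framework, relying on the uniform geometric bounds encoded in $Q_\epsilon$ and on the discreteness and local finiteness of the cover $\mathcal{Z}$ (Claim \ref{localfinito}), which together bound the distortion when passing between affiliated charts.
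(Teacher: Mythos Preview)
Your outline orbits the right idea but does not land it. The second paragraph is circular: you propose to conclude $y\in V^u(\ul{u}')$ ``once $V^u(\ul{u}')$ is shown to pass through $y$'', which is the very statement you are after. The uniqueness-of-leaves heuristic is not a substitute for actually showing the two leaves meet at $y$.

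The third paragraph is where the content should be, but the step you label ``admissible-manifold overlap results'' is exactly the assertion of the lemma at level $n$: you are claiming that $V^u(\sigma_R^n\ul{u})$ and $V^u(\sigma_R^n\ul{u}')$ coincide on the common chart domain, and then specializing to $n=0$. Nothing is gained by passing to general $n$ unless you supply an independent reason for the identification there; ``graph-transform uniqueness'' applies within a \emph{fixed} chart sequence, not across two different affiliated chart sequences.

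The paper's proof avoids this circularity by using a pointwise characterization of $V^u(\ul{v})$ rather than a leaf-by-leaf identification. Writing $u_i=\psi_{x_i}^{p_i^s,p_i^u}$, $v_i=\psi_{y_i}^{q_i^s,q_i^u}$, one has (from \cite[Proposition~2.21, Proposition~3.12(4)]{SBO}) that
\[
V^u(\ul{v})=\{x\in\psi_{y_0}[R_{q_0^u}(0)]:\forall i\geq 0,\ f^{-i}(x)\in\psi_{y_{-i}}[R_{10\sqrt{d}\,Q_\epsilon(y_{-i})}(0)]\}.
\]
For $z\in V^u(\ul{u})\cap W(R_0)$ one already knows $f^{-i}(z)\in\psi_{x_{-i}}[R_{p_{-i}^u}(0)]$; the point is then the \emph{chart-change estimate} \cite[Theorem~4.13]{SBO} between the affiliated charts $u_{-i}$ and $v_{-i}$ (affiliation coming from $R_{-i}\subseteq Z(u_{-i})\cap Z(v_{-i})$), which gives $\psi_{x_{-i}}[R_{p_{-i}^u}(0)]\subseteq\psi_{y_{-i}}[R_{10\sqrt{d}\,Q_\epsilon(y_{-i})}(0)]$. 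Together with $z\in W(R_0)\subseteq\psi_{y_0}[R_{q_0^u}(0)]$, this places $z$ in $V^u(\ul{v})$ directly. That quantitative inclusion of boxes is the missing ingredient in your sketch; once you state it, the iteration over $n$ and the appeal to leaf uniqueness become unnecessary.
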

\begin{proof}
By Claim \ref{localfinito}, $\forall R\in\mathcal{R}$, $|\{u:Z(u)\supseteq R\}|<\infty$. Therefore $W(R_0)$ is well defined and is an open set. Assume
$\underline{u},\ul{v}\CAR\ul{R}$, and write $u_i=\psi_{x_i}^{p^s_i,p^u_i}, v_i=\psi_{y_i}^{q^s_i,q^u_i}$, $i\leq0$. We show $V^u(\ul{u})\cap W(R_0)=V^u(\ul{v})\cap W(R_0)$. Foe every integer $i\leq0$, $Z(u_i)\cap Z(v_i)\supseteq R_i$, hence $\exists z_i\in R_i, \tilde{\ul{u}}^{(i)},\tilde{\ul{v}}^{(i)}\in\Sigma^\#$ s.t. $\pi(\tilde{\ul{u}}^{(i)})=\pi(\tilde{\ul{v}}^{(i)})=z_i$ and $\tilde{u}^{(i)}_0=u_i,\tilde{v}^{(i)}_0=v_i$.
Therefore, by \cite[Theorem~4.13]{SBO},
$\psi_{y_{-i}}^{-1}\circ\psi_{x_{-i}}=O_{-i}+a_{-i}+\Delta_{-i}$, where $O_{-i}$ is an orthogonal linear transformation, $|a_{-i}|_\infty<10^{-1}(q_{-i}^s\wedge q_{-i}^u)$ is a vector of constants, and $\Delta_{-i}:R_\epsilon(0)\rightarrow\mathbb{R}^d$ is a differentiable map s.t. $\Delta_{-i}(0)=0$ and  $\|d_\cdot\Delta_{-i}\|\leq\frac{1}{2}\epsilon^\frac{1}{3}$. The same theorem also states that $\frac{p_{-i}^u}{q_{-i}^u}=e^{\pm\epsilon^\frac{1}{3}}$.
Take some $z\in V^u(\ul{u})\cap W(R_0)\subseteq V^u(\ul{u})\cap \psi_{y_0}[R_{q^u_0}(0)]$, then $\forall i\geq0$ $f^{-i}(z)\in\psi_{x_{-i}}[R_{p_{-i}^u}(0)]=\psi_{y_{-i}}[\psi_{y_{-i}}^{-1}\circ\psi_{x_{-i}}[R_{p_{-i}^u}(0)]]\subseteq\psi_{y_{-i}}[R_{10\sqrt{d}Q_{\epsilon}(y_{-i})}(0)]$. 
As in \cite[Proposition~2.21,Proposition~3.12(4)]{SBO},
$$V^u(\ul{v})=\{x\in\psi_{y_0}[R_{q^u_0}(0)]:\forall i\geq0, f^{-i}(x)\in \psi_{y_{-i}}[R_{10\sqrt{d}Q_{\epsilon}(y_{-i})}(0)]\}.$$ Hence, $z\in V^u(\ul{v})$. So $V^u(\ul{u})\cap W(R_0)\subseteq V^u(\ul{v})\cap W(R_0)$. By symmetry $V^u(\ul{u})\cap W(R_0)= V^u(\ul{v})\cap W(R_0)$.
%
\end{proof}
\begin{definition}
	$$\Sigma_L:=\{(u_i)_{i\leq0}:\ul{u}\in\Sigma\}.$$
\end{definition}
\begin{cor}\label{Riemannianvolume}
$V^u(\ul{R})$ is an open submanifold of $M$. As such it is equipped with an induced (positive and finite) Riemannian volume measure.
\end{cor}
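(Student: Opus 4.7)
The plan is to unwind $V^u(\ul{R})=W(R_0)\cap V^u(\ul{u})$ (any admissible cover $\ul{u}\CAR\ul{R}$, as in Definition \ref{unstablemanifold}) and check that each of the two pieces is well-behaved in an appropriate sense, so that their intersection inherits the submanifold structure from $V^u(\ul{u})$.

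First, I would argue that $W(R_0)$ is an open subset of $M$. By Claim \ref{localfinito} and the fact (from Definition \ref{Doomsday}(2)(b)) that only finitely many partition members $R$ of $\mathcal{R}$ are contained in a given cover element $Z(u)$, only finitely many $u\in\mathcal{V}$ satisfy $Z(u)\supseteq R_0$. Hence
\[
W(R_0)=\bigcap\bigl\{\psi_{x}[R_{p^u}(0)]:u=\psi_x^{p^s,p^u}\in\mathcal{V},\ Z(u)\supseteq R_0\bigr\}
\]
is a finite intersection of open subsets of $M$ (each Pesin chart $\psi_x$ is a local diffeomorphism on an open box), and is therefore open in $M$.

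Next, by the definition of an unstable leaf and by Claim \ref{chikfila}, $V^u(\ul{u})$ is a $(1+\tfrac{\beta}{3})$-regular, embedded, \emph{open} Riemannian submanifold of $M$. Since $W(R_0)$ is open in $M$, the intersection $V^u(\ul{R})=W(R_0)\cap V^u(\ul{u})$ is open in $V^u(\ul{u})$ with respect to the submanifold topology. An open subset of a $(1+\tfrac{\beta}{3})$-regular embedded Riemannian submanifold is itself a $(1+\tfrac{\beta}{3})$-regular embedded Riemannian submanifold, with Riemannian metric induced by restricting the metric from $V^u(\ul{u})$ (equivalently, by pulling back the ambient metric on $M$). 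By Lemma \ref{properness}, this construction is independent of the choice of $\ul{u}\CAR\ul{R}$, so the Riemannian structure on $V^u(\ul{R})$ is intrinsic.

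Finally, $V^u(\ul{R})$ being an open Riemannian submanifold, it carries its induced volume measure. Positivity (on itself) is automatic as soon as $V^u(\ul{R})\neq\varnothing$, which holds because by Lemma \ref{nextstepcover} any $\ul{R}\in\Sig_L$ admits some $\ul{u}\CAR\ul{R}$ and the corresponding box $\psi_{x_0}[R_{p_0^u}(0)]$ contains $V^u(\ul{u})$ in the relevant piece. Finiteness of the volume follows from the final remark after Claim \ref{chikfila}: the leaf $V^u(\ul{u})$ has finite intrinsic diameter, and bounded-geometry $(1+\tfrac{\beta}{3})$-regularity gives a uniform bound on its $u(x)$-dimensional volume, so the restriction to the open subset $V^u(\ul{R})$ has finite volume a fortiori. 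The only point that requires some care is the non-emptiness/positivity statement, since a priori $W(R_0)\cap V^u(\ul{u})$ could be empty even when $V^u(\ul{u})$ is not; this is handled by choosing $\ul{u}$ so that the base point $x_0$ of $u_0$ is close to $R_0$ and invoking the inclusion of local unstable manifolds inside the appropriate Pesin boxes.
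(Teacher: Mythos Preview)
Your proposal is correct and follows essentially the same route as the paper: write $V^u(\ul{R})=W(R_0)\cap V^u(\ul{u})$, observe that $W(R_0)$ is a \emph{finite} intersection of open Pesin boxes (via local finiteness of the cover $\mathcal{Z}$), and that $V^u(\ul{u})$ is an open embedded submanifold of finite volume, so the intersection inherits the structure. Your additional discussion of positivity/non-emptiness goes beyond what the paper's proof says (the paper simply asserts it), and your justification there is a bit loose---the correct reason $V^u(\ul{R})\neq\varnothing$ is not that ``the box contains $V^u(\ul{u})$'' but rather that any point of $R_0$ lies in every such box (since $R_0\subseteq Z(u)$ forces $R_0\subseteq\psi_{x_0}[R_{p_0^u}(0)]$), so $W(R_0)$ is a genuine open neighborhood through which the admissible leaf $V^u(\ul{u})$ passes---but this does not affect the core argument.
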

\begin{proof}
Let some $\ul{u}\in \Sigma_L$ s.t. $\ul{u}\CAR\ul{R}$. By Definition \ref{unstablemanifold}, $V^u(\ul{R})=V^u(\ul{u})\cap W(R_0)$. $V^u(\ul{u})$ is an open submanifold (with a finite volume) of $M$ by definition, and $W(R_0)$ is a finite intersection of open subsets of $M$
. The claim follows.
\end{proof}
\begin{definition}
	Let $\ul{R}\in \Sig_L$, then $\Vol(V^u(\ul{R}))\in (0,\infty)$ denotes the volume of $V^u(\ul{R})$ w.r.t. its induced Riemannian leaf volume.
\end{definition}
\begin{cor}\label{secondform}
For all $\ul{R}\in\Sig_L$, $V^u(\ul{R})=\bigcap\{V^u(\ul{u}):\ul{u}\CAR\ul{R}\}$.
\end{cor}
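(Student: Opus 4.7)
The plan is to prove the two inclusions separately. The forward inclusion is essentially a restatement of Lemma \ref{properness}, while the reverse inclusion requires using Lemma \ref{nextstepcover} to extend arbitrary ``candidate'' $0$-th symbols backwards into full admissible chains covering $\ul{R}$.

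For the inclusion $V^u(\ul R)\subseteq \bigcap\{V^u(\ul u):\ul u\CAR\ul R\}$, I would fix any single $\ul u_0\CAR\ul R$ (which exists by iterating Lemma \ref{nextstepcover}) and observe that by Lemma \ref{properness} the set $V^u(\ul u)\cap W(R_0)$ is the same for \emph{every} covering chain $\ul u\CAR\ul R$, and in particular equals $V^u(\ul R)$. Consequently $V^u(\ul R)\subseteq V^u(\ul u)$ for every such $\ul u$, giving the first inclusion.

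For the reverse inclusion, suppose $z\in V^u(\ul u)$ for every $\ul u\CAR\ul R$. Fix one such $\ul u_0\CAR\ul R$; then $z\in V^u(\ul u_0)$, so it suffices to show $z\in W(R_0)$. By Definition \ref{unstablemanifold}, this amounts to checking $z\in\psi_{x_0}[R_{p_0^u}(0)]$ for every double Pesin chart $u=\psi_{x_0}^{p_0^s,p_0^u}$ with $Z(u)\supseteq R_0$. Given such a $u$, I would apply the second half of Lemma \ref{nextstepcover} inductively backwards to produce $u_{-1},u_{-2},\ldots$ with $u_{-i-1}\to u_{-i}$ and $R_{-i}\subseteq Z(u_{-i})$, setting $u_0:=u$; this produces a chain $\ul u\CAR\ul R$. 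By assumption $z\in V^u(\ul u)$, and the symbolic unstable manifold $V^u(\ul u)$ is always contained in the $0$-th chart's box $\psi_{x_0}[R_{p_0^u}(0)]$ (this is the same containment used in the proof of Lemma \ref{properness}, and is part of the standard construction of $V^u(\ul u)$ in \cite[Proposition~3.12]{SBO}). Hence $z\in\psi_{x_0}[R_{p_0^u}(0)]$, and since $u$ was arbitrary, $z\in W(R_0)$. Therefore $z\in V^u(\ul u_0)\cap W(R_0)=V^u(\ul R)$.

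There is no substantive obstacle here: the proof is really bookkeeping on top of the two lemmas already established. The one point to be careful about is confirming that the containment $V^u(\ul u)\subseteq\psi_{x_0}[R_{p_0^u}(0)]$ follows directly from the construction of the symbolic unstable leaf in \cite{SBO} without any additional hypothesis — but this is exactly the fact invoked in the proof of Lemma \ref{properness}, so it can be cited rather than reproved.
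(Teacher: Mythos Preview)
Your proposal is correct and follows essentially the same route as the paper's proof: the $\subseteq$ direction uses that $V^u(\ul R)=V^u(\ul u)\cap W(R_0)$ for any covering chain (you cite Lemma~\ref{properness}, the paper says ``by definition''), and the $\supseteq$ direction extends each symbol $u$ with $Z(u)\supseteq R_0$ to a full covering chain via Lemma~\ref{nextstepcover} and then uses $V^u(\ul u)\subseteq\psi_{x_0}[R_{p_0^u}(0)]$ to land in $W(R_0)$. The arguments are effectively identical.
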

\begin{proof}
($\supseteq$): Recall, $V^u(\ul{R})=\bigcap\{\psi_{x_0}[R_{p^u_0}(0)]:Z(u)\supseteq R_0,u=\psi_{x_0}^{p^s_0,p^u_0}\}\cap V^u(\ul{u})$ for any $\ul{u}$ s.t. $\ul{u}\CAR\ul{R}$. Fix some $\ul{u}'$ s.t. $\ul{u}'\CAR\ul{R}$, and let $z\in \bigcap\{V^u(\ul{u}):\ul{u}\CAR\ul{R}\}$. For each $u$ s.t. $Z(u)\supseteq R_0$, use Lemma \ref{nextstepcover} in succession to extend $u$ to a chain $\ul{u}$ s.t. $\ul{u}\CAR\ul{R}$ and $u_0=u$; Then, since $V^u(\ul{u})\subseteq\psi_{x_0}[R_{p_0^u}(0)]$ for $u_0=\psi_{x_0}^{p^s_0,p^u_0}$, we get $z\in \bigcap\{\psi_{x_0}[R_{p^u_0}(0)]:Z(u)\supseteq R_0,u=\psi_{x_0}^{p^s_0,p^u_0}\}$. In addition, $z$ is in $V^u(\ul{u}')$, whence $z\in\bigcap\{\psi_{x_0}[R_{p^u_0}(0)]:Z(u)\supseteq R_0,u=\psi_{x_0}^{p^s_0,p^u_0}\}\cap V^u(\ul{u}')=V^u(\ul{R})$.

\medskip
\noindent($\subseteq$): If $x\in V^u(\ul{R})$ and $\ul{u}\CAR\ul{R}$, then $x\in V^u(\ul{u})\cap W(R_0)$ by definition. Hence $x\in V^u(\ul{u})$.
\end{proof}

\begin{cor}\label{containment}
For all $\ul{R}\in\Sig_L$, $f[V^u(\sigma_R\ul{R})]\supseteq V^u(\ul{R})$.
\end{cor}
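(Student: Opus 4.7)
I want to show that every $x\in V^u(\ul{R})$ has a preimage $f^{-1}(x)$ lying in $V^u(\sigma_R\ul{R})$. The proof will reduce this, via Corollary~\ref{secondform}, to verifying the inclusion $f^{-1}(x)\in V^u(\ul{v})$ for each chain $\ul{v}\in\Sigma_L$ that covers $\sigma_R\ul{R}$, and then invoke the already-established dynamical behavior of the symbolic unstable leaves under the left shift on $\Sigma$ (stated just after Claim~\ref{chikfila}): $f^{-1}[V^u(\ul{u})]\subseteq V^u(\sigma^{-1}\ul{u})$ for $\ul{u}\in\Sigma$.

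\textbf{Key steps.} First, fix $x\in V^u(\ul{R})$ and an arbitrary $\ul{v}\in\Sigma_L$ with $\ul{v}\CAR\sigma_R\ul{R}$. Since $(\sigma_R\ul{R})_i=R_{i-1}$, the covering condition on $\ul{v}$ reads $R_{i-1}\subseteq Z(v_i)$ for all $i\leq 0$. Applying Lemma~\ref{nextstepcover} to the transition $R_{-1}\to R_0$ and to the symbol $v_0$ (which covers $R_{-1}$), I obtain a symbol $u_0\in\mathcal{V}$ with $v_0\to u_0$ and $Z(u_0)\supseteq R_0$. Setting $u_i:=v_{i+1}$ for $i\leq -1$ and keeping this $u_0$, I get a chain $\ul{u}\in\Sigma_L$ satisfying $\ul{u}\CAR\ul{R}$ by construction.

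Second, extend $\ul{u}$ to a bi-infinite admissible sequence $\tilde{\ul{u}}\in\Sigma$ (possible because $\mathcal{G}$ is locally finite and arises from a two-sided topological Markov shift, so every vertex occurring in a bi-infinite cover has at least one out-edge). By Definition~\ref{unstablemanifold},
\[
V^u(\ul{R})=W(R_0)\cap V^u(\tilde{\ul{u}})\subseteq V^u(\tilde{\ul{u}}),
\]
so $x\in V^u(\tilde{\ul{u}})$. The property $f^{-1}[V^u(\tilde{\ul{u}})]\subseteq V^u(\sigma^{-1}\tilde{\ul{u}})$ (recalled immediately after Claim~\ref{chikfila}) gives $f^{-1}(x)\in V^u(\sigma^{-1}\tilde{\ul{u}})$.

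Third, I identify $V^u(\sigma^{-1}\tilde{\ul{u}})$ with $V^u(\ul{v})$. Since $V^u(\cdot)$ on $\Sigma$ depends only on the non-positive coordinates (Claim~\ref{chikfila}), and the non-positive part of $\sigma^{-1}\tilde{\ul{u}}$ has $i$-th entry $\tilde{u}_{i-1}=u_{i-1}=v_i$, the two leaves coincide. Hence $f^{-1}(x)\in V^u(\ul{v})$. Since $\ul{v}\CAR\sigma_R\ul{R}$ was arbitrary, Corollary~\ref{secondform} yields $f^{-1}(x)\in\bigcap\{V^u(\ul{v}):\ul{v}\CAR\sigma_R\ul{R}\}=V^u(\sigma_R\ul{R})$, and the desired inclusion $V^u(\ul{R})\subseteq f[V^u(\sigma_R\ul{R})]$ follows.

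\textbf{Expected obstacle.} The only subtlety is bookkeeping: one must check carefully that the one-sided right shift $\sigma_R$ on $\Sig_L$ (and on covering chains in $\Sigma_L$) agrees, after extension to $\Sigma$ and restriction to non-positive indices, with the two-sided left shift $\sigma$. Once this index matching is done cleanly and Lemma~\ref{nextstepcover} is used to extend $\ul{v}$ forward by one step to a chain covering $\ul{R}$, the argument reduces to the already-proven dynamical behavior of the $V^u(\ul{u})$'s on $\Sigma$, with no further estimates needed.
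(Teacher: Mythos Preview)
Your proof is correct and follows essentially the same approach as the paper: both rely on Corollary~\ref{secondform}, Lemma~\ref{nextstepcover}, and the containment $f^{-1}[V^u(\ul{u})]\subseteq V^u(\sigma^{-1}\ul{u})$. The paper's version is slightly more streamlined, arguing set-theoretically via $f^{-1}[V^u(\ul{R})]=\bigcap\{f^{-1}[V^u(\ul{u})]:\ul{u}\CAR\ul{R}\}\subseteq\bigcap\{V^u(\sigma_R\ul{u}):\ul{u}\CAR\ul{R}\}\subseteq\bigcap\{V^u(\ul{v}):\ul{v}\CAR\sigma_R\ul{R}\}$ rather than pointwise, and it skips your extension to a bi-infinite chain (which is unnecessary since $V^u(\cdot)$ depends only on non-positive coordinates).
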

\begin{proof}
By Corollary \ref{secondform}, $V^u(\ul{R})=\bigcap\{V^u(\ul{u}):\ul{u}\CAR\ul{R}\}$. Therefore
\begin{align}
    f^{-1}[V^u(\ul{R})]= & \bigcap\{f^{-1}[V^u(\ul{u})]:\ul{u}\CAR\ul{R}\}\\
    \subseteq & \bigcap\{V^u(\sigma_R\ul{u}):\ul{u}\CAR\ul{R}\}\\
    = & \bigcap\{V^u(\ul{v}):\ul{v}\CAR\sigma_R\ul{R}\}=V^u(\sigma_R\ul{R}).
\end{align}

\noindent The equality in line (3) is given by two sided inclusions: $\subseteq$ is due to Lemma \ref{nextstepcover}: $\forall \ul{v}\in \Sigma_L$ s.t. $\ul{v}\CAR\sigma_R\ul{R}$, there exists $\ul{u}\in \Sigma_L$ s.t. $\ul{u}\CAR\ul{R}$ and $\sigma_R\ul{u}=\ul{v}$; thus the intersection in line (2) is over a bigger collection of sets, and thus smaller. The inclusion $\supseteq$ is straightforward.
\end{proof}

\medskip
Corollary \ref{Riemannianvolume} and Corollary \ref{containment} allow us to adapt a construction by Sinai
(\cite{Si1}), and build a family of absolutely continuous measures with good transformation properties. We will call these measures the {\em natural measures}. They serve as the first step in the construction of the leaf measures which we later integrate into a hyperbolic SRB measure.

\begin{theorem}\label{naturalmeasures} There exists a family of {\em natural measures} $\{m_{V^u(\ul{R})}\}_{\ul{R}\in\Sig_L}$ s.t. $\forall \ul{R}\in \Sig_L$ $m_{V^u(\ul{R})}$ is a measure on $V^u(\ul{R})$, and
$m_{V^u(\sigma \ul{R})}\circ f^{-1}|_{V^u(\ul{R})}=m_{V^u(\ul{R})}\cdot e^{\phi(\ul{R})}$
, where $\phi(\ul{R}):=\log\Big(\lim\limits_{n\rightarrow\infty}\frac{\Vol (f^{-n-1}[V^u(\ul{R})])}{\Vol (f^{-n}[V^u(\sigma_R\ul{R})])}\Big)$, $\phi:\Sig_L\rightarrow (-\infty,0]$. In addition, $m_{V^u(\ul{R})}\sim \lambda_{V^u(\ul{R})}$, $\frac{dm_{V^u(\ul{R})}}{d\lambda_{V^u(\ul{R})}}=e^{\pm\epsilon}$, where $\lambda_{V^u(\ul{R})}$ is the normalized induced Riemannian leaf volume on $V^u(\ul R)$.
\end{theorem}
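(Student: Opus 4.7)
The plan is to adapt Sinai's classical construction of the natural measure on unstable leaves to the symbolic setup here. For $\ul R \in \Sig_L$ and $n\ge 0$, iterating Corollary~\ref{containment} gives $f^{-n}[V^u(\ul R)] \subseteq V^u(\sigma_R^n\ul R)$, so the leaf Jacobian $J_n^{\ul R}(x) := |\det Df^{-n}|_{T_xV^u(\ul R)}|$ is well-defined for every $x\in V^u(\ul R)$. The heart of the argument will be the bounded distortion estimate $J_n^{\ul R}(x)/J_n^{\ul R}(y) = e^{\pm \epsilon/2}$ for every $x,y \in V^u(\ul R)$, uniformly in $n$. One obtains it by telescoping $\log J_n^{\ul R}$ into $-\sum_{k=1}^{n}\log|\det Df|_{T_{f^{-k}(\cdot)}V^u(\sigma_R^k\ul R)}|$ and estimating the $k$-th summand-difference at two base points. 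Three inputs combine: the $C^{1+\beta}$ regularity of $f$; the $(1+\beta/3)$-regularity of the leaves $V^u(\sigma_R^k\ul R)$ together with H\"older dependence of their tangent spaces on the base point along the leaf (coming from the constructions in \cite{SBO}); and the uniform backward contraction $d_{V^u(\sigma_R^k\ul R)}(f^{-k}(x),f^{-k}(y))\le C e^{-k\chi}d_{V^u(\ul R)}(x,y)$ inherited from the Pesin-chart estimates. Together these yield a geometrically decaying bound $\le C' e^{-k\chi\beta'}d(x,y)^{\beta'}$ on the $k$-th summand-difference, and the resulting summable series can be made smaller than $\epsilon/2$ by choosing $\epsilon$ small (permissible since the results apply to any $\epsilon\in(0,\epsilon_\chi]$).

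Next, fix a reference point $x_{\ul R}\in V^u(\ul R)$ and define $\rho_{\ul R}(x) := \lim_{n\to\infty} J_n^{\ul R}(x)/J_n^{\ul R}(x_{\ul R})$. The same telescoped estimate shows that $\log J_n^{\ul R}(x) - \log J_n^{\ul R}(x_{\ul R})$ is Cauchy in $n$, its tail being controlled by $\sum_{k>N} e^{-k\chi\beta'}$, so the limit exists and lies in $[e^{-\epsilon/2}, e^{\epsilon/2}]$. Set $m_{V^u(\ul R)} := \rho_{\ul R}\cdot\lambda_{V^u(\ul R)}$; this is equivalent to $\lambda_{V^u(\ul R)}$ with Radon--Nikodym derivative in $[e^{-\epsilon}, e^{\epsilon}]$, as required.

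For the transformation law, let $A\subseteq V^u(\ul R)\subseteq f[V^u(\sigma_R\ul R)]$ be Borel. The change of variables $y=f^{-1}(x)$ yields
\[
(m_{V^u(\sigma_R\ul R)}\circ f^{-1})(A) = \frac{\Vol(V^u(\ul R))}{\Vol(V^u(\sigma_R\ul R))}\int_A \rho_{\sigma_R\ul R}(f^{-1}(x))\,\bigl|\det Df^{-1}|_{T_xV^u(\ul R)}\bigr|\,d\lambda_{V^u(\ul R)}(x).
\]
The explicit infinite-product formulas for $\rho_{\ul R}$ and $\rho_{\sigma_R\ul R}$ make the ratio $\rho_{\sigma_R\ul R}(f^{-1}(x))\bigl|\det Df^{-1}|_{T_xV^u(\ul R)}\bigr|/\rho_{\ul R}(x)$ telescope to a quantity independent of $x$: reference-point and intermediate Jacobian factors cancel pairwise along the orbit. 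Calling this quantity $e^{\phi(\ul R)}$, rewriting $\Vol(f^{-n}[V^u(\cdot)]) = \int J_n^{\cdot}\, d\Vol$ and invoking the distortion estimate of Step~1 identifies the constant with the claimed limit $\lim_n \Vol(f^{-n-1}[V^u(\ul R)])/\Vol(f^{-n}[V^u(\sigma_R\ul R)])$ (read in logarithmic form, consistent with the stated range of $\phi$). Non-positivity follows since $f^{-1}[V^u(\ul R)]\subseteq V^u(\sigma_R\ul R)$ forces each ratio in the limit to be $\le 1$.

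The main obstacle is the bounded distortion step. Securing the H\"older dependence of $z\mapsto T_zV^u(\ul R)$ along the leaf with constants uniform enough to produce a geometrically summable series requires carefully invoking the leaf-regularity results from \cite{SBO}, and exploiting that $\HWT_\chi$ points are $\epsilon$-weakly temperable in order to control $\|C_\chi^{-1}(f^{-k}(\cdot))\|$ uniformly along the orbit, which in turn governs the Pesin-chart distortion bounds.
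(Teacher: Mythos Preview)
Your approach is essentially the paper's: Sinai's construction via a bounded-distortion limit of pushed-forward leaf volumes. The distortion step you outline is correct in spirit; in the paper it is not rebuilt from scratch but cited as \cite[Proposition~4.4(3)]{SBO}, which gives directly
\[
\Bigl|\log|d_{f^{-n}(y)}f^{-m}\omega(y)|-\log|d_{f^{-n}(z)}f^{-m}\omega(z)|\Bigr|\le \epsilon\, d(f^{-n}(y),f^{-n}(z))^{\beta/4},
\]
and then the exponential contraction along the leaf (from \cite[Proposition~6.3(1)]{Sarig}) makes the right-hand side summable. Your remark about invoking $\epsilon$-weak temperability to control $\|C_\chi^{-1}(f^{-k}(\cdot))\|$ is a red herring here: the estimate above is a property of admissible $u$-manifolds along chains in $\Sigma_L$, and the required control is already encoded in the chain structure, not in temperability of an individual orbit.

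There is one genuine slip in your normalization. You define $\rho_{\ul R}(x)=\lim_n J_n^{\ul R}(x)/J_n^{\ul R}(x_{\ul R})$ using an arbitrary reference point $x_{\ul R}$, whereas the paper sets $\rho(x)=\lim_n J_n^{\ul R}(x)\cdot \Vol(V^u(\ul R))/\Vol(f^{-n}[V^u(\ul R)])$, i.e.\ normalizes by the \emph{leaf average} of $J_n^{\ul R}$. The two densities differ by a multiplicative constant $\kappa_{\ul R}$ depending on $\ul R$, so your transformation constant differs from the paper's by the coboundary $\kappa_{\sigma_R\ul R}/\kappa_{\ul R}$. Your claim that ``invoking the distortion estimate identifies the constant with the claimed limit'' only pins the ratio down to within $e^{\pm\epsilon}$, not exactly: the two normalized averages $\frac{1}{\Vol}\int J^{\ul R}_{n+1}/J^{\ul R}_{n+1}(x_{\ul R})$ and $\frac{1}{\Vol}\int J^{\sigma_R\ul R}_{n}/J^{\sigma_R\ul R}_{n}(x_{\sigma_R\ul R})$ each lie in $[e^{-\epsilon/2},e^{\epsilon/2}]$ but need not have the same limit. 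With the paper's averaged normalization this issue disappears, and plugging $\tilde A=V^u(\ul R)$ into the transformation law immediately yields $e^{\phi(\ul R)}=m_{V^u(\sigma_R\ul R)}(f^{-1}[V^u(\ul R)])=\lim_n \Vol(f^{-n-1}[V^u(\ul R)])/\Vol(f^{-n}[V^u(\sigma_R\ul R)])$ on the nose. The fix is simply to replace your pointwise reference $J_n^{\ul R}(x_{\ul R})$ by the integrated quantity $\Vol(f^{-n}[V^u(\ul R)])/\Vol(V^u(\ul R))$.
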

\begin{proof}
Fix $\ul{R}\in \Sig_L$. By Corollary \ref{containment}, $\forall n\geq0$, $f^{-n}[V^u(\ul{R})]\subseteq V^u(\sigma_R^n(\ul{R}))$. Denote by $\lambda_n$ the normalized Riemannian leaf volume on $f^{-n}[V^u(\ul{R})]$. Define $\mu_n=\lambda_n\circ f^{-n}$. This is an absolutely continuous probability measure on $V^u(\ul{R})$. Let $\rho_n ^{\ul{R}}(y):=\frac{d\mu_n}{d\lambda_{V^u(\ul{R})}}(y)=\mathrm{Jac}(d_y f^{-n}|_{T_y V^u(\ul{R}) })\cdot\frac{\Vol(V^u(\ul{R}))}{\mathrm{Vol}\Big(f^{-n}[V^u(\ul{R})]\Big)}$. Define $\overline{m}$ as the Riemannian leaf volume on $V^u(\ul{R})$ (not normalized). Then $$\rho_n ^{\ul{R}}(y)=\mathrm{Jac}(d_y f^{-n}|_{T_y V^u(\ul{R}) })\cdot\frac{1}{\int_{V^u(\ul{R})}\mathrm{Jac}(d_z f^{-n}|_{T_z V^u(\ul{R}) })d\overline{m}(z)}\cdot\Vol(V^u(\ul{R})).$$

Define $g_n^{\ul{R}}(y):=\int_{V^u(\ul{R})}\frac{\mathrm{Jac}(d_z f^{-n}|_{T_z V^u(\ul{R})})}  {\mathrm{Jac}(d_y f^{-n}|_{T_y V^u(\ul{R})})}d\overline{m}(z)$. Then
\begin{align}\label{JakeAmir2}g_n^{\ul{R}}(y)=&\int_{V^u(\ul{R})}\prod_{j=0}^{n-1}\frac{\mathrm{Jac}\Big(d_{f^{-j}(z)}f^{-1}|_{T_{f^{-j}(z)}f^{-j}[V^u(\ul{R})]}\Big)}{\mathrm{Jac}\Big(d_{f^{-j}(y)}f^{-1}|_{T_{f^{-j}(y)}f^{-j}[V^u(\ul{R})]}\Big)}d\overline{m}(z)\\
=&\int_{V^u(\ul{R})}\exp\Big[\sum_{j=0}^{n-1}\log\mathrm{Jac}\Big(d_{f^{-j}(z)}f^{-1}|_{T_{f^{-j}(z)}f^{-j}[V^u(\ul{R})]}\Big)-\log\mathrm{Jac}\Big(d_{f^{-j}(y)}f^{-1}|_{T_{f^{-j}(y)}f^{-j}[V^u(\ul{R})]}\Big)\Big]d\overline{m}(z).\nonumber\end{align}
Let $z\in V^u(\ul{R})$. Thus by \cite[Proposition~4.4(3)]{SBO}, for any $n\geq0$, and $\omega(f^{-n}(y)),\omega(f^{-n}(z))$ any normalized volume forms on $T_{f^{-n}(y)}f^{-n}[V^u(\ul{R})],T_{f^{-n}(z)}f^{-n}[V^u(\ul{R})]$ respectively,
\begin{equation}\label{forholderphi}\forall m\geq0,\text{ } \left|\log|d_{f^{-n}(y)}f^{-m}\omega(f^{-n}(y))|-\log|d_{f^{-n}(z)}f^{-m}\omega(f^{-n}(z))|\right|\leq \epsilon d(f^{-n}(y),f^{-n}(z))^ \frac{\beta}{4}.\end{equation}
Let any $\ul{u}\in \Sigma_L, u_i=\psi_{x_i}^{p^s_i,p^u_i},i\leq0$ s.t. $\ul{u}\CAR \ul{R}$, then by the strong bound shown in the proof of \cite[Proposition~6.3(1)]{Sarig}, $\epsilon d(f^{-n}(y),f^{-n}(z))^\frac{\beta}{4}\leq \epsilon(6p_0^ue^{\frac{-\chi n}{2}})^\frac{\beta}{4}\leq \epsilon^\frac{3}{4}(p_0^u)^\frac{\beta}{4}e^{\frac{-\beta\chi n}{8}}$.\footnote{\cite{Sarig} applies for the case $d=2$, while \cite{SBO} extends its results to $d\geq2$. However, \cite{SBO}[Proposition~4.4(1)] refers to \cite{Sarig}[Proposition~6.3(1)] for proof, as the proofs for these claims are almost identical. Therefore we refer the reader to the proof in the a-priori 2 dimensional case for the required bound.} Hence

\begin{align}\label{DeltaFlight}
\Big|&\sum_{j=0}^{n+m-1}\log\mathrm{Jac}\Big(d_{f^{-j}(z)}f^{-1}|_{T_{f^{-j}(z)}f^{-j}[V^u(\ul{R})]}\Big)-\log\mathrm{Jac}\Big(d_{f^{-j}(y)}f^{-1}|_{T_{f^{-j}(y)}f^{-j}[V^u(\ul{R})]}\Big)\\
-&\Big[\sum_{j=0}^{n-1}\log\mathrm{Jac}\Big(d_{f^{-j}(z)}f^{-1}|_{T_{f^{-j}(z)}f^{-j}[V^u(\ul{R})]}\Big)-\log\mathrm{Jac}\Big(d_{f^{-j}(y)}f^{-1}|_{T_{f^{-j}(y)}f^{-j}[V^u(\ul{R})]}\Big)\Big]\Big|\nonumber\\
=&\Big|\sum_{j=n}^{n+m-1}\log\mathrm{Jac}\Big(d_{f^{-j}(z)}f^{-1}|_{T_{f^{-j}(z)}f^{-j}[V^u(\ul{R})]}\Big)-\log\mathrm{Jac}\Big(d_{f^{-j}(y)}f^{-1}|_{T_{f^{-j}(y)}f^{-j}[V^u(\ul{R})]}\Big)\Big|\nonumber\\
=&\Big|\log\mathrm{Jac}\Big(d_{f^{-n}(z)}f^{-m}|_{T_{f^{-n}(z)}f^{-n}[V^u(\ul{R})]}\Big)-\log\mathrm{Jac}\Big(d_{f^{-n}(y)}f^{-m}|_{T_{f^{-n}(y)}f^{-n}[V^u(\ul{R})]}\Big)\Big|\leq\epsilon^\frac{3}{4}(p_0^u)^\frac{\beta}{4}e^{-\frac{\chi\beta}{8}n}.\nonumber
\end{align}
This bound is uniform in $y,z\in V^u(\ul{R})$, thus $\sum_{j=0}^n\log\mathrm{Jac}\Big(d_{f^{-j}(z)}f^{-1}|_{T_{f^{-j}(z)}f^{-j}[V^u(\ul{R})]}\Big)-$

$\log\mathrm{Jac}\Big(d_{f^{-j}(y)}f^{-1}|_{T_{f^{-j}(y)}f^{-j}[V^u(\ul{R})]}\Big)$ converges uniformly as $n\rightarrow\infty$ to a finite limit. Therefore, the sequence of its exponents, $\prod_{j=0}^{n-1}\frac{\mathrm{Jac}\Big(d_{f^{-j}(z)}f^{-1}|_{T_{f^{-j}(z)}f^{-j}[V^u(\ul{R})]}\Big)}{\mathrm{Jac}\Big(d_{f^{-j}(y)}f^{-1}|_{T_{f^{-j}(y)}f^{-j}[V^u(\ul{R})]}\Big)}$, converges uniformly to a finite non zero limit. A uniform limit commutes with integral, so $g_n^{\ul{R}}$ also converge for every $y$, and thus also $\rho_n$. In fact, since the bound for a fixed $n$ is uniform in $y,z\in V^u(\ul{R})$, then $g_n^{\ul{R}}\xrightarrow[]{\text{uniformly}}g^{\ul{R}} $, to a continuous limit (since $g_n^{\ul{R}}$ are continuous),
\begin{equation}\label{JakeAmir}g^{\ul{R}}(y):=\lim_{n\rightarrow\infty}g_n^{\ul{R}}(y)=\int_{V^u(\ul{R})}\lim_{n\rightarrow\infty} \prod_{j=0}^{n-1}\frac{\mathrm{Jac}\Big(d_{f^{-j}(z)}f^{-1}|_{T_{f^{-j}(z)}f^{-j}[V^u(\ul{R})]}\Big)}{\mathrm{Jac}\Big(d_{f^{-j}(y)}f^{-1}|_{T_{f^{-j}(y)}f^{-j}[V^u(\ul{R})]}\Big)} d\overline{m}(z).\end{equation}
$\rho^{\ul{R}} =\frac{\Vol(V^u(\ul{R}))}{g^{\ul{R}}}$, and by equation \eqref{forholderphi} (with $n=0$) and the remark after it, $g^{\ul{R}} =\Vol(V^u(\ul{R}))e^{\pm\sqrt{\epsilon}(p_0^u)^\frac{\beta}{4}}= \Vol(V^u(\ul{R})) e^{\pm\epsilon}$, then 
$\rho^{\ul{R}} =e^{\pm\epsilon}$. Therefore, $\rho_n^{\ul{R}}\xrightarrow[]{\text{uniformly}}\rho^{\ul{R}} $, a continuous density. Define the measure on $V^u(\ul{R})$: $$m_{V^u(\ul{R})}(A)=\int_A \rho^{\ul{R}} d\lambda_{V^u(\ul{R})}.$$

%

\noindent\underline{Notice:} 
Henceforth we write $\Jac^u(d_xf^k)\equiv \Jac(d_xf^k|_{T_xV^u(\ul{S})})$ for $x\in V^s(\ul{S})$.

\medskip
\noindent\underline{Claim:} If $A\subseteq f^{-1}[V^u(\ul{R})]\subseteq V^u(\sigma_R\ul{R})$ is measurable, then  $$m_{V^u(\sigma_R\ul{R})}(A)=\Big(m_{V^u(\ul{R})}\circ f\Big)(A)\cdot\Big(m_{V^u(\sigma_R\ul{R})}\circ f^{-1}\Big)(V^u(\ul{R})).$$

\medskip
\noindent\underline{Proof:} Without loss of generality $\lambda_{V^u(\ul{R})}(f[A])>0$. By definition, for $\tilde{A}:=f[A]$:

\begin{align*}m_{V^u(\sigma_R\ul{R})}\circ f^{-1}(\tilde{A})=&\int_{f^{-1}[\tilde{A}]}\lim\limits_{n\rightarrow\infty}\frac{\Jac^u(d_zf^{-n})}{\Vol (f^{-n}[V^u(\sigma_R\ul{R})])}\cdot\Vol(V^u(\sigma_R\ul{R}))\frac{d\overline{m}_{V^u(\sigma_R\ul{R})}(z)}{\Vol(V^u(\sigma_R\ul{R}))}\\
=&\int_{\tilde{A}}\lim\limits_{n\rightarrow\infty}\frac{\Jac^u(d_{f^{-1}(z)}f^{-n})\Jac^u(d_zf^{-1})}{\Vol (f^{-n-1}[V^u(\ul{R})])}d\overline{m}_{V^u(\ul{R})}(z)
\cdot\lim\limits_{n\rightarrow\infty}\frac{\Vol (f^{-n-1}[V^u(\ul{R})])}{\Vol (f^{-n}[V^u(\sigma_R\ul{R})])}\\
=&m_{V^u(\ul{R})}(\tilde{A})\cdot\lim\limits_{n\rightarrow\infty}\frac{\Vol (f^{-n-1}[V^u(\ul{R})])}{\Vol (f^{-n}[V^u(\sigma_R\ul{R})])},\end{align*}
where $\overline{m}_{V^u(\sigma_R\ul{R})},\overline{m}_{V^u(\ul{R})}$ are the (not normalized) Riemannian volumes of the respective leaves; The reason we can separate the second equation into the product of two limits is that the first limit exists and is finite and non zero, and since the limit on the previous line exists and is finite.
Plugging in $\tilde{A}=V^u(\ul{R})$ yields $m_{V^u(\sigma_R\ul{R})}(f^{-1}[V^u(\ul{R})])=\lim\limits_{n\rightarrow\infty}\frac{\Vol (f^{-n-1}[V^u(\ul{R})])}{\Vol (f^{-n}[V^u(\sigma_R\ul{R})])}$. Substituting $\tilde{A}=f[A]$ gives the requested result.
\end{proof}

\noindent\textbf{Remark:} For the connection between $\phi$ and ``the geometric potential" see \textsection \ref{lalipop}.

\begin{definition}
For every $\ul{R}\in\Sig$, define the absolutely continuous probability measure $m_{V^u(\ul{R})}$ on $V^u(\ul{R})$ by Theorem \ref{naturalmeasures}. We call $\{m_{V^u(\ul{R})}\}_{\ul{R}\in\Sig_L}$ the {\em family of natural measures}. We also let

 $$\forall \ul{R}\in\Sig_L, m_{\ul{R}}:=\mathbb{1}_{W^u(\ul{R})}\cdot m_{V^u(\ul{R})}\text{, where }W^u(\ul{R}):=\bigcap_{j=0}^\infty f^{j}[R_{-j}].$$
\end{definition}
($W^u(\ul{R})$ may be empty unless $\ul{R}\in\Sig_L^\circ$, and then $m_{\ul{R}}$ may be the zero measure).

In what follows, we will mold the structure of a TMS onto the family of natural measures. This will allow us to apply results in thermodynamic formalism to construct invariant measures with absolutely continuous conditional measures.

\subsection{Analytic Properties of the Symbolic Construction and $\phi$}\label{analyticpropsofsymb}\text{ }
In this section we establish the continuity properties of smooth leaf measures and the potential $\phi$ from Theorem \ref{naturalmeasures}. These properties are needed for the study of thermodynamic properties on $\Sig_L$ later on (see \textsection \ref{lalipop}).

\begin{claim}\label{phibar} Let $\phi:\Sig_L\rightarrow(-\infty,0]$ be as in Theorem \ref{naturalmeasures}, then $\forall \ul{R}\in\Sig_L, m_{\ul{R}}\circ f^{-1}=\sum_{\sigma_R\ul{S}=\ul{R}}m_{\ul{S}}\cdot e^{\phi(\ul{S})}$.
\end{claim}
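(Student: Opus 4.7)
The plan is to unfold the definitions and reduce the claim to Corollary \ref{decomposition} and the transformation rule from Theorem \ref{naturalmeasures}. Fix $\ul{R}\in\Sig_L$ and a Borel set $A\subseteq M$. By definition,
\[
(m_{\ul{R}}\circ f^{-1})(A)=m_{V^u(\ul{R})}\bigl(W^u(\ul{R})\cap f^{-1}(A)\bigr),
\]
so the first step is to rewrite the set on the right as a disjoint union indexed by $\{\ul{S}:\sigma_R\ul{S}=\ul{R}\}$.

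Since $f$ is a diffeomorphism, $W^u(\ul{R})=f^{-1}\bigl(f[W^u(\ul{R})]\bigr)$, and by Corollary \ref{decomposition},
\[
W^u(\ul{R})\cap f^{-1}(A)=f^{-1}\Bigl(\bigcupdot_{\sigma_R\ul{S}=\ul{R}} W^u(\ul{S})\cap A\Bigr)=\bigcupdot_{\sigma_R\ul{S}=\ul{R}} f^{-1}\bigl(W^u(\ul{S})\cap A\bigr).
\]
Since the union is disjoint, $m_{V^u(\ul{R})}\bigl(W^u(\ul{R})\cap f^{-1}(A)\bigr)=\sum_{\sigma_R\ul{S}=\ul{R}} m_{V^u(\ul{R})}\bigl(f^{-1}(W^u(\ul{S})\cap A)\bigr)$. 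This reduces the claim to verifying, for each $\ul{S}$ with $\sigma_R\ul{S}=\ul{R}$, the pointwise identity
\[
m_{V^u(\ul{R})}\bigl(f^{-1}(B)\bigr)=e^{\phi(\ul{S})}\,m_{V^u(\ul{S})}(B),\qquad B\subseteq V^u(\ul{S}).
\]

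The second step is precisely this identity, and it is exactly Theorem \ref{naturalmeasures} applied with $\ul{S}$ in place of the theorem's $\ul{R}$: the relation $m_{V^u(\sigma_R\ul{S})}\circ f^{-1}|_{V^u(\ul{S})}=m_{V^u(\ul{S})}\cdot e^{\phi(\ul{S})}$ becomes the displayed identity once one substitutes $\sigma_R\ul{S}=\ul{R}$, using that $f^{-1}[V^u(\ul{S})]\subseteq V^u(\ul{R})$ by Corollary \ref{containment}. Taking $B=W^u(\ul{S})\cap A$ and summing over $\ul{S}$ with $\sigma_R\ul{S}=\ul{R}$ yields
\[
(m_{\ul{R}}\circ f^{-1})(A)=\sum_{\sigma_R\ul{S}=\ul{R}} e^{\phi(\ul{S})}\,m_{V^u(\ul{S})}\bigl(W^u(\ul{S})\cap A\bigr)=\sum_{\sigma_R\ul{S}=\ul{R}} e^{\phi(\ul{S})}\,m_{\ul{S}}(A),
\]
which is the desired equality of measures.

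There is essentially no obstacle beyond bookkeeping: one must only keep the shift direction straight (so that the preimage in $V^u(\sigma_R\ul{S})=V^u(\ul{R})$ is what is being integrated against $m_{V^u(\ul{R})}$) and use that the decomposition in Corollary \ref{decomposition} is disjoint so the sum of measures is legitimate. No convergence issue arises because by Claim \ref{localfinito} and Definition \ref{Doomsday}(2)(b), each $R_0$ has only finitely many successors in the partition, so the sum indexed by $\{\ul{S}:\sigma_R\ul{S}=\ul{R}\}$ is in fact finite.
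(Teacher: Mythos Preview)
Your proof is correct and follows essentially the same approach as the paper: both unfold the definition of $m_{\ul{R}}$, apply Corollary \ref{decomposition} to decompose $W^u(\ul{R})\cap f^{-1}(A)$ as a disjoint union over $\{\ul{S}:\sigma_R\ul{S}=\ul{R}\}$, and then invoke the transformation rule from Theorem \ref{naturalmeasures} on each piece. Your additional remarks on Corollary \ref{containment} and on the finiteness of the sum are welcome bookkeeping but not needed beyond what the paper already relies on implicitly.
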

\begin{proof} Corollary \ref{decomposition} says that  $f[W^u(\ul{R})]=\bigcupdot_{\sigma_R\ul{S}=\ul{R}}W^u(\ul{S})$. For any measurable $A\subseteq f[V^u(\ul{R})]$,
\begin{align*}
    m_{\ul{R}}\circ f^{-1}(A)=&m_{V^u(\ul{R})}(W^u(\ul{R})\cap f^{-1}[A])=m_{V^u(\ul{R})}\left(f^{-1}\left[\bigcupdot_{\sigma_R\ul{S}=\ul{R}}W^u(\ul{S})\cap A\right]\right)\\
    =&\sum_{\sigma_R\ul{S}=\ul{R}}m_{V^u(\ul{R})}(f^{-1}[W^u(\ul{S})\cap A])=\sum_{\sigma_R\ul{S}=\ul{R}}e^{\phi(\ul{S})}m_{V^u(\ul{S})}(W^u(\ul{S})\cap A)=\sum_{\sigma_R\ul{S}=\ul{R}}e^{\phi(\ul{S})}m_{\ul{S}}(A),
\end{align*}
because by Theorem \ref{naturalmeasures}, $m_{V^u(\ul{R})}\circ
f^{-1}|_{V^u(\ul{S})}=e^{\phi(\ul{S})}\cdot m_{V^u(\ul{S})}
$ for any $\ul{S}$ s.t. $\sigma_R\ul{S}=\ul{R}$.
\end{proof}


\begin{definition}
	$M_f:=\max_{x\in M}\{\|d_xf\|,\|d_x(f^{-1})\|\}$. This is finite since $d_\cdot f,d_\cdot f^{-1}$ are H\"older continuous and $M$ is compact.
\end{definition}
\begin{definition}\label{localisometries}
For every $x\in M$ there is an open neighborhood $D$ of diameter less than $\rho$ and a smooth map $\Theta_D:TD\rightarrow\mathbb{R}^d$ s.t. :
\begin{enumerate}
    \item $\Theta_D:T_xM\rightarrow\mathbb{R}^d$ is a linear isometry for every $x\in D$.

    \item Define $\nu_x:=\Theta_D|_{T_xM}^{-1}:\mathbb{R}^d\rightarrow T_xM$, then $(x,u)\mapsto(\exp_x\circ\nu_x)(u)$ is smooth and Lipschitz on $D\times B_2(0)$ w.r.t. the metric $d(x,x')+|u-u'|$, where $|\cdot|$ is the $\ell^\infty$ norm on $\mathbb{R}^d$.

    \item $x\mapsto\nu_x^{-1}\circ \exp_x^{-1}$ is a Lipschitz map from $D$ to $C^2(D,\mathbb{R}^d)=\{C^2\text{-maps from }D\text{ to }\mathbb{R}^d\}$. Let $\mathcal{D}$ be a finite cover of $M$ by such neighborhoods. Denote with $\varpi(\mathcal{D})$ the Lebesgue number of that cover: If $d(x,y)<\varpi(\mathcal{D})$ then $x$ and $y$ belong to the same $D$ for some $D$.
    \item $\exists E_0=E_0(M)$ s.t. $\forall D\in\mathcal{D}$, $(x_1,u_1,v_1)\mapsto (\Theta_D\circ d_{v_1}\exp_{x_1})(\nu_{x_1}u_1)$ is $E_0$-Lipschitz on $D\times B_2(0)\times B_2(0)$, w.r.t. the metric $d(x_1,x_2)+|u_1-u_2|+|v_1-v_2|$.
    \item $\exists H_0=H_0(M,f)$ s.t. $\forall D,D_+,D_-\in\mathcal{D}$, $\forall x,y,z\in D$, s.t. $f(x),f(y)\in D^+, f^{-1}(x),f^{-1}(z)\in D^- $, $\|\Theta_{D^-}d_x(f^{-1})\nu_x-\Theta_{D^-}d_y(f^{-1})\nu_y\|,\|\Theta_{D^+}d_xf\nu_x-\Theta_{D^+}d_yf\nu_y\|\leq H_0\cdot d(x,y)^\beta$.
    \item We assume w.l.o.g. that $\epsilon>0$ is small enough, so $\sup\limits_{|v|<\epsilon}\{\|d_v\exp_x-\mathrm{Id}_{T_xM}\|,\|d_v\exp^{-1}_x-\mathrm{Id}_{T_xM}\|\}\leq\frac{1}{2}$, $\forall x\in M$.
\end{enumerate}
\end{definition}

The following two definitions are versions of similar definitions in \cite{Sarig,SBO,KM}
. We use the notation ``$u/s$" to define both $u$-manifolds and $s$-manifolds, without having to write everything twice.
\begin{definition}\label{def135} Let $x\in \HWT_\chi$, a {\em $u$-manifold} in $\psi_x$ is a manifold $V^u\subset M$ of the form
$$V^u=\psi_x[\{(F_1^u(t_{s(x)+1},...,t_d),...,F_{s(x)}^u(t_{s(x)+1},...,t_d),t_{s(x)+1},...t_d) : |t_i|\leq q\}],$$
where $0<q\leq Q_\epsilon(x)$, and $\overrightarrow{F}^u$ is a $C^{1+\beta/3}$ function s.t. $\max\limits_{\overline{R_q(0)}}|\overrightarrow{F}^u|_\infty\leq Q_\epsilon(x)$.

Similarly we define an {\em $s$-manifold} in $\psi_x$:
$$V^s=\psi_x[\{(t_1,...,t_{s(x)},F_{s(x)+1}^s(t_1,...,t_{s(x)}),...,F_d^s(t_1,...,t_{s(x)})): |t_i|\leq q\}],$$
with the same requirements for $\overrightarrow{F}^s$ and $q$. 
The function $\overrightarrow{F}=\overrightarrow{F}^{u/s}$ is called the {\em representing function} of $V^{u/s}$ at $\psi_x$. The parameters of a $u/s$ manifold in $\psi_x$ are:
\begin{itemize}
\item $\alpha$-parameter: $\alpha(V^{u/s}):=\|d_{\cdot}\overrightarrow{F}\|_{\beta/3}:=\max\limits_{\overline{R_q(0)}}\|d_{\cdot}\overrightarrow{F}\|+\text{H\"ol}_{\beta/3}(d_{\cdot}\overrightarrow{F})$,

where $\text{H\"ol}_{\beta/3}(d_{\cdot}\overrightarrow{F}):=\max\limits_{\vec{t_1},\vec{t_2}\in\overline{R_q(0)}}\{\frac{\|d_{\overrightarrow{t_1}}\overrightarrow{F}-d_{\overrightarrow{t_2}}\overrightarrow{F}\|}{|\overrightarrow{t_1}-\overrightarrow{t_2}|^{\beta/3}}\}$ and $\|A\|:=\sup\limits_{v\neq0}\frac{|Av|_\infty}{|v|_\infty}$.
\item $\gamma$-parameter: $\gamma(V^{u/s}):=\|d_0\overrightarrow{F}\|$.
\item $\varphi$-parameter: $\varphi(V^{u/s}):=|\overrightarrow{F}(0)|_\infty$.
\item $q$-parameter: $q(V^{u/s}):=q$.
\end{itemize}

A {\em $(u/s, \alpha,\gamma,\varphi,q)$-manifold} in $\psi_x$ is a $u/s$-manifold $V^{u/s}$ in $\psi_x$ whose parameters satisfy $\alpha(V^{u/s})\leq \alpha,\gamma(V^{u/s})\leq\gamma,\varphi(V^{u/s})\leq\varphi,q(V^{u/s})\leq q$.
\end{definition}
Notice that the dimensions of an $s$ or a $u$ manifold in $\psi_x$ depend on $x$. Their sum is $d$. Recall the definition of $\HWT_\chi$ (Definition \ref{NUHsharp}).
\begin{definition}\label{admissible} Suppose $x\in \HWT_\chi$ and $0<p^s,p^u\leq Q_\epsilon(x)$ (i.e. $\psi_x^{p^s,p^u}$ is a double Pesin-chart)
. A $u/s$-admissible manifold in $\psi_x^{p^s,p^u}$ is a {\em $(u/s, \alpha,\gamma,\varphi,q)$-manifold} in $\psi_x$ s.t.
$$\alpha\leq\frac{1}{2},\gamma\leq\frac{1}{2}(p^u\wedge p^s)^{\beta/3},\varphi\leq10^{-3}(p^u\wedge p^s),\text{ and }q = \begin{cases} p^u & u\text{-manifolds} \\
p^s & s\text{-manifolds} \end{cases}.$$
\end{definition}

\noindent\textbf{Remark:}  By Claim \ref{chikfila}, $\forall \ul{u}\in \Sigma$ there exist a maximal dimension stable leaf $V^s(\ul{u})= V^s((u_i)_{i\geq0})$ and a maximal dimension unstable leaf $V^u(\ul{u})= V^u((u_i)_{i\leq0})$
. The construction in \cite[Proposition~3.12]{SBO} (and \cite[Proposition~4.15]{Sarig} when $d=2$) in fact tells us that $V^s(\ul{u})$ and $V^u(\ul{u})$ are admissible stable and unstable manifolds in $u_0=\psi_{x_0}^{p^s_0,p^u_0}$, respectively.

\begin{definition}
	Let $V^u$ and $W^u$ be $u$-admissible manifolds in $\psi_x^{p^s,p^u}$, and let $F^u, G^u$ be their representing functions, respectively. Then,
	\begin{align*}
	d_{C^0}(V^u,W^u):= \max_{R_{p^u}(0)}|F^u-G^u|,\text{ }d_{C^1}(V^u,W^u):= \max_{R_{p^u}(0)}|F^u-G^u|+\max_{R_{p^u}(0)}\|d_\cdot F^u-d_\cdot G^u\|,
	\end{align*}
	where $R_{p^u}(0):=\{u\in\mathbb{R}^d:|u|_\infty\leq p^u\}$.
\end{definition}
\begin{definition}
	Let $\ul{R},\ul{S}\in \Sig_L$ be two chains s.t. $R_0=S_0$, then
	\begin{align*}
	d_{C^0}(V^u(\ul{R}), V^u(\ul{S})):=&\max_{\ul{u}\CAR\ul{R},\ul{v}\CAR\ul{S},u_0=v_0}\left\{d_{C^0}(V^u(\ul{u}),V^u(\ul{v})\right\},\\
	d_{C^1}(V^u(\ul{R}), V^u(\ul{S})):=&\max_{\ul{u}\CAR\ul{R},\ul{v}\CAR\ul{S},u_0=v_0}\left\{d_{C^1}(V^u(\ul{u}),V^u(\ul{v}))\right\}.
	\end{align*}
\end{definition}
\noindent The maximum is well defined and finite since $\forall R\in\mathcal{R}$, $\#\{u\in\mathcal{V}: Z(u)\supseteq R\}<\infty$, and $\forall \ul{u},\ul{v}\in \Sigma_L$ s.t. $\ul{u},\ul{v}\CAR\ul{R}$ and $u_0=v_0=\psi_{x}^{p^s,p^u}$, $V^u(\ul{u})=V^u(\ul{v})$, by \cite[Proposition~4.15]{SBO}.
\begin{claim}\label{VRlikeVu} $\exists K>0,\theta\in(0,1)$ s.t. $d(\ul{R},\ul{S})\leq e^{-n}\Rightarrow d_{C^1}(V^u(\ul{R}),V^u(\ul{S}))\leq K\cdot\theta^n$, $\forall n\geq1$.
\end{claim}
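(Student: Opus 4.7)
The symbolic metric makes $d(\underline{R}, \underline{S}) \leq e^{-n}$ force $R_{-i} = S_{-i}$ for $0 \leq i \leq n-1$, so the approach is to exploit the uniform exponential contraction of the backward graph transform that produces $V^u(\underline{u})$, as in \cite[Proposition~4.4]{SBO}. Fix arbitrary covering chains $\underline{u} \CAR \underline{R}$ and $\underline{v} \CAR \underline{S}$ with $u_0 = v_0$; the task is to bound $d_{C^1}(V^u(\underline{u}), V^u(\underline{v}))$ uniformly over such choices. For each $0 \leq i \leq n-1$, both $Z(u_{-i})$ and $Z(v_{-i})$ contain the common partition member $R_{-i} = S_{-i}$, so by \cite[Theorem~4.13]{SBO} the chart change satisfies $\psi_{y_{-i}}^{-1} \circ \psi_{x_{-i}} = O_{-i} + a_{-i} + \Delta_{-i}$, an $\epsilon$-small perturbation of an orthogonal map with $|a_{-i}|$ and $\|d\Delta_{-i}\|$ polynomially small in $\epsilon$, and with $p^u_{-i}/q^u_{-i} = e^{\pm \epsilon^{1/3}}$.

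Next I would use the graph-transform characterization of the unstable leaf: for any admissible $u$-manifold $W_{-m}$ in the chart $u_{-m}$, the $m$-fold forward iterate $f^m(W_{-m})$ intersected with the admissible window in chart $u_0$ converges in $C^1$ to $V^u(\underline{u})$, at a uniform exponential rate $C \theta_0^m$. The contraction factor $\theta_0 \in (0,1)$ comes from the Lyapunov estimate $\|D_u^{-1}\| \leq e^{-\chi}$ of Theorem \ref{pesinoseledec}, with only an $\epsilon$-small deterioration per step arising from the H\"older regularity of $df$ (Definition \ref{localisometries}). Starting from a single admissible $u$-manifold $W$ realized in both charts $u_{-n}$ and $v_{-n}$ (transferring through the compatibility map of the previous paragraph, with the distortion absorbed into admissibility constants), I would iterate the graph transform $n$ steps along each chain to obtain manifolds $W^{(u)}, W^{(v)}$ in the common chart $u_0 = v_0$, and then bound
\begin{equation*}
d_{C^1}(V^u(\underline{u}), V^u(\underline{v})) \leq d_{C^1}(V^u(\underline{u}), W^{(u)}) + d_{C^1}(W^{(u)}, W^{(v)}) + d_{C^1}(W^{(v)}, V^u(\underline{v})).
\end{equation*}
The two outer terms are $\leq C \theta_0^n$ by the contraction just described.

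The main obstacle is controlling the middle term: one must show that the per-step discrepancies $a_{-i}, \Delta_{-i}$, introduced because the graph transforms along $\underline{u}$ and along $\underline{v}$ act through different double Pesin charts, feed into each iteration without destroying the $\theta_0$-contraction. This is the usual bookkeeping in Lyapunov coordinates: write the graph transform as a fixed-point equation for the representing function of the admissible manifold, verify that each step is $\theta_0$-Lipschitz on the $C^1$ space of admissible manifolds, and use that $Q_\epsilon$ from Definition \ref{littleQ} uniformly controls $|a_{-i}|$ and $\|d\Delta_{-i}\|$ along the orbit, so that the error injected at step $i$ is $O(\epsilon^{1/3})$ and is then contracted by $\theta_0^{n-i}$ in the subsequent iterations. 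Summing the resulting geometric series yields $d_{C^1}(W^{(u)}, W^{(v)}) \leq C' \theta_0^n$. Absorbing constants and taking any $\theta \in (\theta_0, 1)$ (or $\theta = \theta_0$ with enlarged $C$) completes the proof.
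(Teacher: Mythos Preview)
Your approach could in principle be made to work, but it is substantially more complicated than the paper's, and the ``main obstacle'' you flag --- tracking the coupled graph transforms through \emph{different} Pesin charts $u_{-i}\neq v_{-i}$ at every step $0\le i\le n-1$ --- is exactly the hard analysis that the paper avoids entirely.

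The paper's proof is a two-line reduction. The key observation you missed is that, since $R_{-i}=S_{-i}$ for $0\le i\le n-1$, Lemma~\ref{nextstepcover} lets you build a \emph{single common} admissible segment $(u_{-n+1},\dots,u_{-1},u)$ with $Z(u_{-i})\supseteq R_{-i}=S_{-i}$, and then extend it separately (again via Lemma~\ref{nextstepcover}) to chains $\underline{w}_{\underline{R}}(u)\CAR\underline{R}$ and $\underline{w}_{\underline{S}}(u)\CAR\underline{S}$ that agree on their last $n$ symbols. Now $d(\underline{w}_{\underline{R}}(u),\underline{w}_{\underline{S}}(u))\le e^{-n}$ in $\Sigma_L$, so \cite[Proposition~3.12]{SBO} gives $d_{C^1}(V^u(\underline{w}_{\underline{R}}(u)),V^u(\underline{w}_{\underline{S}}(u)))\le K\theta^n$ directly --- no middle term, no coupled bookkeeping. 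Finally, \cite[Proposition~4.15]{SBO} says $V^u(\underline{u})$ depends only on which $\underline{R}$ it covers and on $u_0$, so \emph{any} covering chain with zeroth symbol $u$ gives the same leaf; this collapses the maximum in the definition of $d_{C^1}(V^u(\underline{R}),V^u(\underline{S}))$ to a finite max over $\{u:Z(u)\supseteq R_0\}$.

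In short: you are re-deriving from scratch the content of \cite[Proposition~3.12]{SBO} under harder hypotheses (chains that merely cover the same partition elements rather than coincide), whereas the paper arranges the covering chains to coincide for $n$ steps and then simply cites that proposition.
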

\begin{proof}
Let $u\in\mathcal{V}$ s.t. $Z(u)\supseteq R_0=S_0$. Use Lemma \ref{nextstepcover} repeatedly $n$ times to obtain an admissible sequence $(u_{-n+1},...,u_{-1},u)$ s.t. $Z(u_{-i})\supseteq R_{-i}=S_{-i}$, $\forall 0\leq i\leq n-1$. Continue using Lemma \ref{nextstepcover} to obtain two admissible chains $\ul{w}_{\ul{R}}(u), \ul{w}_{\ul{S}}(u)\in\Sigma_L\cap [u]$ s.t. $\ul{w}_{\ul{R}}(u)\CAR\ul{R}$, $\ul{w}_{\ul{S}}(u)\CAR\ul{S}$, and $d(\ul{w}_{\ul{R}}(u), \ul{w}_{\ul{S}}(u))\leq e^{-n}$. By \cite[Proposition~4.15]{SBO}, $\forall \ul{w}'\in \Sigma_L\cap [u]$ s.t. $\ul{w}'\CAR\ul{R}$, $V^u(\ul{w}_{\ul{R}}(u))=V^u(\ul{w}')$, and similarly with $\ul{w}_{\ul{S}}(u)$. By \cite[Proposition~3.12]{SBO}, $\exists K>0,\theta\in(0,1)$ which depend on $\beta$ and $\chi$ s.t. $d_{C^1}(V^u(\ul{w}_{\ul{R}}(u)),V^u(\ul{w}_{\ul{S}}(u)))\leq K\cdot\theta^n$. Therefore,
\begin{align*}
	d_{C^0}(V^u(\ul{R}),V^u(\ul{S}))\leq d_{C^1}(V^u(\ul{R}),V^u(\ul{S})) =\max_{u\in\mathcal{V}:Z(u)\supseteq R_0}\left\{d_{C^1}\left(V^u(\ul{w}_{\ul{R}}(u)), V^u(\ul{w}_{\ul{S}}(u))\right)\right\}\leq K\cdot \theta^n.
\end{align*}
\end{proof}

The following lemma is the main technical step in the study of the regularity of the function $\phi$ (see Theorem \ref{naturalmeasures}). Recall that  $\phi$ is defined in terms of densities on unstable leaves. The difficulty is that when we iterate backwards, unstable leaves of different points may drift apart with a large exponential rate. Therefore the estimates are required to be very careful. The main idea we use is the fact that a weak exponential rate can be made stronger by considering the accumulative rate over several iterates.
\begin{lemma}\label{0.1}
Let $\ul{R},\ul{S}\in\Sig_L$
. 
There exist $n_0\in \mathbb{N}$ and $\theta_2\in(0,1)$ which depend only on $\chi,\beta$ and $f$, and a diffeomorphism $I:V^u(\ul{R})\rightarrow V^u(\ul{S})$, s.t. if $d(\ul{R},\ul{S})=e^{-n}$, $n\geq n_0$, then for
$\rho^{\ul{R}}:=\frac{dm_{V^u(\ul{R})}}{d\lambda_{V^u(\ul{R})}},\rho^{\ul{S}}:=\frac{dm_{V^u(\ul{S})}}{d\lambda_{V^u(\ul{S})}}$,  
$\|\rho^{\ul{R}}-\rho^{\ul{S}}\circ I\|_\infty\leq \epsilon\theta_2^n$, where $\lambda_{V^u(\ul{R})},\lambda_{V^u(\ul{S})}$ are the normalized Riamannian volume on the respective leaf, and $\epsilon>0$ is as in Theorem \ref{epsilonika}.
\end{lemma}
\begin{proof} Let $\ul{u},\ul{v}\in\Sigma_L$ s.t. $\ul{u}\CAR\ul{R},\ul{v}\CAR\ul{S}$, $d(\ul{u},\ul{v})= e^{-n}\leq e^{-n_0}$, and $d_{C^1}(V^u(\ul{R}), V^u(\ul{S}))= d_{C^1}(V^u(\ul{u}), V^u(\ul{v})) $ (as in the proof of Claim \ref{VRlikeVu}). Denote by
$F^{\ul{v}}$ and $F^{\ul{u}}$ the representing functions of $V^u(\ul{v})$ and $V^u(\ul{u})$ respectively. Write $u=\dim V^u(\ul{R})=\dim V^u(\ul{S})$ (since $R_0=S_0$).

\medskip
\underline{Part 0:} Write $\forall i\geq0$, $u_{-i}=\psi_{x_{-i}}^{p_{-i}^s,p_{-i}^u},v_{-i}=\psi_{y_{-i}}^{q_{-i}^s,q_{-i}^u}$ and recall that $u_{-i}=v_{-i}$ for all $i\leq n$ by assumption. Hence, in particular, $\psi_{x_0}=\psi_{y_0}$. In addition $R_0=S_0$, thus $O:=\pi_u\circ\psi_{x_0}^{-1}[W(R_0)]$, where $\pi_u$ is the projection onto the last $u$ coordinates, is an open subset of $\mathbb{R}^d$ for which $V^u(\ul{R})=(\psi_{x_0}\circ\tilde{F}^{\ul{u}})[O],V^u(\ul{S})=(\psi_{x_0}\circ\tilde{F}^{\ul{v}})[O]$ where $\tilde{F}^{\ul{u}/\ul{v}}(t)=(F^{\ul{u}/\ul{v}}(t),t)$. $I$ is defined in the following way: $$I:\psi_{x_0}(F^{\ul{u}}(t),t)\mapsto\psi_{x_0}(F^{\ul{v}}(t),t).$$ Equivalently, $I=\psi_{x_0}\circ \tilde{F}^{\ul{v}}\circ \pi_u\circ \psi_{x_0}^{-1}$, and  $\tilde{F}^{\ul{u}/\ul{v}}$ are diffeomorphisms onto their images. So $I$ is a diffeomorphism from $V^u(\ul{R})$ to $V^u(\ul{S})$. $I$ satisfies
\begin{equation}\label{adidax}
d(I(x),x)\leq \|\psi_{x_0}\|d_{C^0}(F^{\ul{u}},F^{\ul{v}})\leq 2d_{C^0}(F^{\ul{u}},F^{\ul{v}}).	
\end{equation}

The identity map of $V^u(\ul{R})$ can be written as $\mathrm{Id}=\psi_{x_0}\circ \tilde{F}^{\ul{u}}\circ\pi_u\circ\psi_{x_0}^{-1}:V^u(\ul{R})\rightarrow V^u(\ul{R})$. Let $z\in V^u(\ul{R})$, and let $\Theta_{D}:TD\rightarrow \mathbb{R}^d$ be a local isometry as in Definition \ref{localisometries} s.t. $D$ is a neighborhood which contains $z,I(z)$. Then,
\begin{align}\label{ams}
\|\Theta_D d_zI-\Theta_D d_z\mathrm{Id}\|&=\|\Theta_Dd_z(\psi_{x_0}\circ \tilde{F}^{\ul{v}}\circ\pi_u\circ\psi_{x_0}^{-1})-\Theta_Dd_z(\psi_{x_0}\circ \tilde{F}^{\ul{u}}\circ\pi_u\circ\psi_{x_0}^{-1}) \nonumber\|\\
&= \|[\Theta_Dd_t(\psi_{x_0}\circ\tilde{F}^{\ul{v}})-\Theta_Dd_t(\psi_{x_0}\tilde{F}^{\ul{u}})]\pi_u\circ d_z(\psi_{x_0}^{-1}))\|, \hspace{0.25cm}\text{where } t=\pi_u\psi_{x_0}^{-1}(z),
\nonumber\\
&\leq 2\|C_\chi^{-1}(x_0)\|\cdot\|\Theta_Dd_t(\psi_{x_0}\circ\tilde{F}^{\ul{v}})-\Theta_Dd_t(\psi_{x_0}\circ\tilde{F}^{\ul{u}})\|\nonumber\\
&=2\|C_\chi^{-1}(x_0)\|\cdot\|\Theta_Dd_{\tilde{F}^{\ul{v}}(t)}\psi_{x_0}d_t\tilde{F}^{\ul{v}}-\Theta_Dd_{\tilde{F}^{\ul{u}}(t)}\psi_{x_0}d_t\tilde{F}^{\ul{u}}\|\nonumber\\
&=2\|C_\chi^{-1}(x_0)\|\cdot\|\Theta_Dd_{\tilde{F}^{\ul{v}}(t)}\psi_{x_0}(d_t\tilde{F}^{\ul{v}}-d_t\tilde{F}^{\ul{u}})+(\Theta_Dd_{\tilde{F}^{\ul{v}}(t)}\psi_{x_0}-\Theta_Dd_{\tilde{F}^{\ul{u}}(t)}\psi_{x_0})d_t\tilde{F}^{\ul{u}}\|\nonumber\\
&\leq2\|C_\chi^{-1}(x_0)\|\cdot\left[d_{C^1}(\tilde{F}^{\ul{u}},\tilde{F}^{\ul{v}})\|d_\cdot\psi_{x_0}\|+\mathrm{Lip}(d_\cdot\psi_{x_0})d_{C^0}(\tilde{F}^{\ul{u}},\tilde{F}^{\ul{v}})\cdot\|d_\cdot\tilde{F}^{\ul{u}}\|\right]\nonumber\\
&\leq 8\|C_\chi^{-1}(x_0)\|\cdot d_{C^1}(V^u(\ul{R}),V^u(\ul{S})),
\end{align}
It follows that,
\begin{equation}\label{adidax2}
	|\Jac(d_zI)-1|=|\Jac(d_zI)-\Jac(d_z\mathrm{Id})|\leq C_1 \|\Theta_D d_zI-\Theta_D d_z\mathrm{Id}\|\leq 8C_1\|C_\chi^{-1}(x_0)\|d_{C^1}(V^u(\ul{R}),V^u(\ul{S})),
\end{equation}
where $C_1$ is the Lipschitz constant for the absolute value of the determinant on the ball of $(d\times d)$-matrices with a bounded operator norm of $2M_f$, where $\|d_zI\|\leq 2M_f$ by \eqref{ams}.\footnote{Write $u=dim V^u(\ul{R})$, and let $A, B$ be $u\times u$ matrices s.t. $|a_{ij}-b_{ij}|\leq \delta$ for all $i,j\leq u$. So $\det A=\sum_{\sigma\in S_{u}}\mathrm{sgn}(\sigma)\prod_{i=1}^{u}a_{i\sigma(i)}$. Then $|\det A-\det B|\leq \sum_{\sigma\in S_{u}}|\prod_{i=1}^{u}a_{i\sigma(i)}-\prod_{i=1}^{u}b_{i\sigma(i)}|\leq |S_{u}|\cdot u\|B\|_{Fr}^{u-1}\delta$. Take maximum over $u\leq d-1$.} 
By \cite[Proposition~3.12(5)]{SBO}, $d_{C^1}(V^u(\ul{u}),V^u(\ul{v}))\leq K\theta^n$; and in addition (see the remark after \cite[Definition~3.2]{SBO}), $d_{C^1}(V^u(\ul{u}),V^u(\ul{v}))\leq 3(p_0^u)^\frac{\beta}{3}$. Therefore,
\begin{align}\label{DeltaAir2}
d_{C^1}(V^u(\ul{R}),V^u(\ul{S}))=&d_{C^1}(V^u(\ul{u}),V^u(\ul{v}))\nonumber\\
=& d_{C^1}(V^u(\ul{u}),V^u(\ul{v}))^\frac{5}{6}\cdot d_{C^1}(V^u(\ul{u}),V^u(\ul{v}))^\frac{1}{6}
\leq 3(p_0^u)^\frac{5\beta}{18}\cdot K^\frac{1}{6}\cdot \theta^{\frac{1}{6}n}.
\end{align}
Similarly, by the definition of admissible manifolds $d_{C^0}(V^u(\ul{R}),V^u(\ul{S}))\leq p^u_0$, and so
\begin{align}\label{DeltaAir3.0}
d_{C^0}(V^u(\ul{R}),V^u(\ul{S}))=&d_{C^0}
= d_{C^0}(V^u(\ul{R}),V^u(\ul{S}))^\frac{5}{6}\cdot d_{C^0}(V^u(\ul{R}),V^u(\ul{S}))^\frac{1}{6}
\leq 3(p_0^u)^\frac{5}{6}\cdot K^\frac{1}{6}\cdot \theta^{\frac{1}{6}n}.
\end{align}

\noindent Substituting these estimates in equation \eqref{adidax2} gives (for sufficiently small $\epsilon>0$),
\begin{equation}\label{DeltaAir}|\Jac(d_zI)-1|\leq8C_1\|C_\chi^{-1}(x_0)\|\cdot2(p_0^u)^\frac{5\beta}{18}\tilde{K}\tilde{\theta}^n
 \leq \epsilon(p_0^u)^\frac{\beta}{4} \tilde{\theta}^n,\end{equation}
where $\tilde{\theta}:=\theta^\frac{\beta}{6}\in (0,1)$, and $\tilde{K}:=K^\frac{1}{6}>0$.

\medskip
\underline{Part 1:}
Given $\ul{R}\in \Sig_L$, define as in equation \eqref{JakeAmir2},  $$g_n^{\ul{R}}(y):=\int_{V^u(\ul{R})}\exp[\sum_{k=0}^{n-1}\log\Jac(d_{f^{-k}(z)}f^{-1}|_{T_{f^{-k}(z)}f^{-k}[V^u(\ul{R})]})-\log\Jac(d_{f^{-k}(y)}f^{-1}|_{T_{f^{-k}(y)}f^{-k}[V^u(\ul{R})]})]d\overline{m}_{\ul{R}}(z),$$
where $\overline{m}_{\ul{R}}$ is the induced Riemannian volume of $V^u(\ul{R})$ (not normalized).
We saw in Theorem \ref{naturalmeasures} that for a fixed $z\in V^u(\ul{R})$, the inner sum (call it $S_{n,z}^{\ul{R}}(y)$) converges uniformly to a limit denoted by $S_z^{\ul{R}}(y)$. Therefore, also uniformly in $y,z$, \footnote{If $a_n\xrightarrow[n\rightarrow\infty]{} a$, and $\exists c_n\downarrow0$ s.t. $\forall  m\geq n$, $|a_n-a_m|\leq c_n$, then $|a_n-a|\leq |a_n-a_m|+|a_m-a|$, $\forall m\geq0$. Since the second term tends to 0 as $m\rightarrow\infty$, we get $|a_n-a|\leq c_n$ for all $n$. We apply this to the bounds from equation \eqref{DeltaFlight}.}
\begin{equation}\label{BestWest3}|S_{n,z}^{\ul{R}}(y)-S_z^{\ul{R}}(y)|\leq \epsilon^\frac{3}{4}(p_0^u)^\frac{\beta}{4}e^{-\frac{\chi\beta}{8}n}= \epsilon^\frac{3}{4}(p_0^u)^\frac{\beta}{4}\theta_3^n,
\end{equation} where $\theta_3:=e^{-\frac{\chi\beta}{8}}\in(0,1)$.


\medskip
\underline{Part 2:} Let $g^{\ul{R}}:=\lim_{n\rightarrow\infty} g_n^{\ul{R}}$ be the uniform and finite limit as in equation \eqref{JakeAmir}. Then,
 \begin{align}\label{part2I}
|g^{\ul{R}}(y)-(g^{\ul{S}}\circ I)(y)| \leq & \left|\int_{V^u(\ul{R})}e^{S_z^{\ul{R}}(y)}d\overline{m}_{\ul{R}}(z)-\int_{V^u(\ul{S})}e^{S_{z'}^{\ul{S}}(I(y))}d\overline{m}_{\ul{S}}(z')\right|\nonumber \\ \nonumber
 = & \left|\int_{V^u(\ul{R})}e^{S_z^{\ul{R}}(y)}d\overline{m}_{\ul{R}}(z)-\int_{V^u(\ul{R})}e^{S_{I(z)}^{\ul{S}}(I(y))}\Jac(d_zI)d\overline{m}_{\ul{R}}(z)\right|\\ \nonumber
 \leq & \int_{V^u(\ul{R})}|e^{S_z^{\ul{R}}(y)}-e^{S_{I(z)}^{\ul{S}}(I(y))}|d\overline{m}_{\ul{R}}(z)+\int_{V^u(\ul{R})}e^{S_{I(z)}^{\ul{S}}(I(y))}|\Jac(d_zI)-1|d\overline{m}_{\ul{R}}(z)\\ \nonumber
 = & \int_{V^u(\ul{R})}e^{S_{I(z)}^{\ul{S}}(I(y))}\cdot|e^{S_z^{\ul{R}}(y)-S_{I(z)}^{\ul{S}}(I(y))}-1|d\overline{m}_{\ul{R}}(z)\\ \nonumber
 +&\int_{V^u(\ul{R})}e^{S_{I(z)}^{\ul{S}}(I(y))}|\Jac(d_zI)-1|d\overline{m}_{\ul{R}}(z)\\
 \leq & e^{\epsilon}\Big(\int_{V^u(\ul{R})}|e^{S_z^{\ul{R}}(y)-S_{I(z)}^{\ul{S}}(I(y))}-1|d\overline{m}_{\ul{R}}(z) +\int_{V^u(\ul{R})}|\Jac(d_zI)-1|d\overline{m}_{\ul{R}}(z)\Big),
 \end{align}
 because $S_a^{\ul{R}}(b), S_c^{\ul{R}}(d) =\pm \epsilon$ for all $a,b\in V^u(\ul{R}), c,d\in V^u(\ul{S})$, because of equation \eqref{forholderphi}.

 By part 0,
 $|\Jac(d_zI)-1|\leq \epsilon(p_0^u)^\frac{\beta}{4}\tilde{\theta}^n$. Plugging this in equation \eqref{part2I} yields \begin{equation}\label{BestWest2}|g^{\ul{R}}(y)-g^{\ul{S}}\circ I(y)| \leq e^\epsilon\cdot\Vol(V^u(\ul{R}))\left(2
 \sup_{y,z\in V^u(\ul{R})}|S_z^{\ul{R}}(y)-S_{I(z)}^{\ul{S}}(I(y))|+ \epsilon(p_0^u)^\frac{\beta}{4}\tilde{\theta}^n\right).\end{equation}

\medskip
\underline{Part 3:} By \cite[Lemma~2.15]{SBO}, $\exists \omega_0\geq1$ which depends only on $M,f$ and $\beta$, s.t. $\frac{Q_\epsilon\circ f(\cdot)}{Q_\epsilon(\cdot)}=\omega_0^{\pm1}$. Let $\kappa=\kappa(f,\chi)>0$ be the constant given by Theorem \ref{pesinoseledec}. Define,
\begin{equation}\label{defr}
	r:=\log_{\tilde{\theta}}\left(\frac{\theta}{(2\omega_0)^\frac{\beta}{4}+\kappa+1}\right)+1, \text{ }n_0:=\lceil r\rceil\in\mathbb{N},
\end{equation}
and notice that $r>\frac{6}{\beta}>1$. 
By assumption $n\geq n_0$, so $m:=\lfloor \frac{n}{r}\rfloor\geq 1$. Then,
\begin{align}\label{firsttwosums}
&|S_z^{\ul{R}}(y)-S_{I(z)}^{\ul{S}}(I(y))|\leq |S_z^{\ul{R}}(y)-S_{m,z}^{\ul{R}}(y)|+|S_{m,z}^{\ul{R}}(y)-S_{m,I(z)}^{\ul{S}}(I(y))|+|S_{m,I(z)}^{\ul{S}}(I(y))-S_{I(z)}^{\ul{S}}(I(y))| \nonumber\\
&(\because\text{eq. }\eqref{BestWest3}) \leq  2\epsilon^\frac{3}{4}(p_0^u)^\frac{\beta}{4}\cdot\theta_3^{\frac{n}{r}-1} \nonumber\\
&+\sum_{k=0}^{m-1}\Big| \log\Jac(d_{f^{-k}(z)}f^{-1}\rvert_{T_{f^{-k}(z)}f^{-k}[V^u(\ul{R})]})-\log\Jac(d_{f^{-k}(y)}f^{-1}\rvert_{T_{f^{-k}(y)}f^{-k}[V^u(\ul{R})]})\nonumber\\
&+  \log\Jac(d_{f^{-k}(I(y))}f^{-1}\rvert_{T_{f^{-k}(I(y))}f^{-k}[V^u(\ul{S})]})-\log\Jac(d_{f^{-k}(I(z))}f^{-1}\rvert_{T_{f^{-k}(I(z))}f^{-k}[V^u(\ul{S})]})\Big|\nonumber\\
&\leq
(p_0^u)^\frac{\beta}{4}\frac{2\epsilon^\frac{3}{4}}{\theta_3}\cdot(\theta_3^\frac{1}{r})^n+\sum_{k=0}^{m-1}\Big| \log\Jac(d_{f^{-k}(y)}f^{-1}\rvert_{T_{f^{-k}(y)}f^{-k}[V^u(\ul{R})]})-\log\Jac(d_{f^{-k}(I(y))}f^{-1}\rvert_{T_{f^{-k}(I(y))}f^{-k}[V^u(\ul{S})}])\Big|\nonumber\\
&+  \sum_{k=0}^{m-1}\Big|\log\Jac(d_{f^{-k}(z)}f^{-1}\rvert_{T_{f^{-k}(z)}f^{-k}[V^u(\ul{R})]})-\log\Jac(d_{f^{-k}(I(z))}f^{-1}\rvert_{T_{f^{-k}(I(z))}f^{-k}[V^u(\ul{S})]})\Big|.
\end{align}

\medskip
\underline{Part 4:} We wish to bound the expressions of the form

$$|\log\Jac(d_{f^{-k}(z)}f^{-1}\rvert_{T_{f^{-k}(z)}f^{-k}[V^u(\ul{R})]})-\log\Jac(d_{f^{-k}(I(z))}f^{-1}\rvert_{T_{f^{-k}(I(z))}f^{-k}[V^u(\ul{S})]})\Big|\text{, for }k\leq m.$$
Set $I_k:V^u(\sigma_R^k\ul{R})\rightarrow V^u(\sigma_R^k\ul{S})$, $\psi_{x_{-k}}(F^{\sigma_R^k\ul{u}}(t),t)$ $_{\mapsto}^{I_k}$ $\psi_{x_{-k}}(F^{\sigma_R^k\ul{v}}(t),t)$, where $F^{\sigma_R^k\ul{u}},F^{\sigma_R^k\ul{v}}$ are the representing functions of $V^u(\sigma_R^k\ul{u}),V^u(\sigma_R^k\ul{v})$ (as in the definition of $I$). By Corollary \ref{containment}, $f^{-k}[V^u(\ul{R})]\subseteq V^u(\sigma_R^k\ul{R}),f^{-k}[V^u(\ul{S})]\subseteq V^u(\sigma_R^k\ul{S})$.

\begin{align}\label{louis}
&\frac{\Jac(d_{f^{-k}(z)}f^{-1}\rvert_{T_{f^{-k}(z)}V^u(\sigma_R^k\ul{R})})}{\Jac(d_{f^{-k}(I(z))}f^{-1}\rvert_{T_{f^{-k}(I(z))}V^u(\sigma_R^k\ul{S})})}=\nonumber\\
&=\frac{\Jac(d_{f^{-k}(z)}f^{-1}\rvert_{T_{f^{-k}(z)}V^u(\sigma_R^k\ul{R})})}{\Jac(d_{I_k(f^{-k}(z))}f^{-1}\rvert_{T_{I_k(f^{-k}(z))}V^u(\sigma_R^k\ul{S})})}\cdot\frac{\Jac(d_{I_k(f^{-k}(z))}f^{-1}\rvert_{T_{I_k(f^{-k}(z))}V^u(\sigma_R^k\ul{S})})}{\Jac(d_{f^{-k}(I(z))}f^{-1}\rvert_{T_{f^{-k}(I(z))}V^u(\sigma_R^k\ul{S})})}.
\end{align}

We proceed to bound the first term.
Consider the local isometries $\Theta_{D_k}:TD_k\rightarrow \mathbb{R}^d$, $k\geq0$, as in Definition \ref{localisometries}, s.t. $D_k$ is a neighborhood which contains $\psi_{x_{-k}}[R_{Q_\epsilon(x_{-k})}(0)]\supseteq V^u(\sigma_R^k\ul{R}),V^u(\sigma_R^k\ul{S})$; and where $\nu_p:\mathbb{R}^d\rightarrow T_pM$ is the local inverse such that $\nu_p\Theta_{D_k}=\mathrm{Id}_{T_pM}$ for all $p\in D_k$. For easier notation, we omit the restriction to appropriate tangent spaces when they are clear from the context.
\begin{align}\label{natityahu}
    &\|\Theta_{D_{k+1}}d_{f^{-k}(z)}f^{-1}|_{T_{f^{-k}(z)}V^u(\sigma_R^k\ul{R})}-\Theta_{D_{k+1}}d_{I_k(f^{-k}(z))}f^{-1}d_{f^{-k}(z)}I_k\|\\
    &=\|\Theta_{D_{k+1}}d_{f^{-k}(z)}f^{-1}\underset{\mathrm{Id}}{\underbrace{\nu_{f^{-k}(z)}\Theta_{D_{k}} }} \underset{\mathrm{Id}}{\underbrace{d_{f^{-k}(z)}\mathrm{Id}_{V^u(\sigma^k_R\ul{u})}}}-\Theta_{D_{k+1}}d_{I_k(f^{-k}(z))}f^{-1}\underset{\mathrm{Id}}{\underbrace{\nu_{I_k(f^{-k}(z))}\Theta_{D_k}}}d_{f^{-k}(z)}I_k\|\nonumber\\
    &\leq\|\Theta_{D_{k+1}}d_{f^{-k}(z)}f^{-1} |_{T_{f^{-k}(z)}V^u(\sigma_R^k\ul{R})}\nu_{f^{-k}(z)}- \Theta_{D_{k+1}}d_{I_k(f^{-k}(z))}f^{-1}\nu_{I_k(f^{-k}(z))}\|\cdot \|d_\cdot I_k\|+\nonumber\\
    &\hspace{0.475cm}\|d_\cdot f^{-1}\|\cdot \|\Theta_{D_{k}} \mathrm{Id}_{T_{f^{-k}(z)}V^u(\sigma^k_R\ul{u})}-\Theta_{D_k}d_{f^{-k}(z)}I_k\|\nonumber\\
    &\leq \|d_\cdot I_k\|\cdot H_0 d(f^{-k}(z),I_k(f^{-k}(z)))^\beta+M_f\|\Theta_{D_k}\mathrm{Id}_{T_{f^{-k}(z)}V^u(\sigma^k_R\ul{R})}-\Theta_{D_k}d_{f^{-k}(z)}I_k\| \nonumber\\
    &\leq 2H_0\cdot2d_{C^0}(V^u(\sigma_R^k\ul{R}),V^u(\sigma_R^k\ul{S}))^\beta+M_f8\|C_\chi^{-1}(x_{-k})\|d_{C^1}(V^u(\sigma_R^k\ul{R}),V^u(\sigma_R^k\ul{S}))\nonumber\\
    &\leq (4H_0+8M_f)\epsilon(p_{-k}^u)^\frac{\beta}{4}\tilde{\theta}^{n-k}, \nonumber
\end{align}
 where $H_0$ is given by Definition \ref{localisometries}. The last line is by equation \eqref{ams} and by equations \eqref{DeltaAir2} and \eqref{DeltaAir3.0}, applied to the shifted sequences $\sigma_R^k\ul{u}, \sigma_R^k\ul{v} $. Then we get,
 \begin{align*}
     &|\Jac(d_{f^{-k}(z)}f^{-1}\rvert_{T_{f^{-k}(z)}V^u(\sigma_R^k\ul{R})})-\Jac(d_{I_k(f^{-k}(z))}f^{-1}\rvert_{T_{I_k(f^{-k}(z))}V^u(\sigma_R^k\ul{S})})|\\
     &\leq|\Jac(d_{f^{-k}(z)}f^{-1} |_{T_{f^{-k}(z)}V^u(\sigma_R^k\ul{R})})-\Jac(d_{f^{-k}(z)}(f^{-1}\circ I_k))|+M_f^d|\Jac(d_{f^{-k}(z)}I_k)-1|\\
     &\leq C_1(4H_0+8M_f)\epsilon(p_{-k}^u)^\frac{\beta}{4}\tilde{\theta}^{n-k}+M_f^d\epsilon(p_{-k}^u)^\frac{\beta}{4}\tilde{\theta}^{n-k}\leq C_1(4H_0+9M_f^d)\epsilon (p_0^u)^\frac{\beta}{4}(e^\epsilon\omega_0)^{\frac{k\beta}{4}}\tilde{\theta}^{n-k},
 \end{align*}
where $C_1$ is the Lipschitz constant for the Jacobian and $p_{-k}^u\leq (e^\epsilon \omega_0)^k\cdot p_0^u$ (see equation \eqref{radioblanew2} in Appendix A). In addition, since,\begin{equation*}\Jac(d_{f^{-k}(z)}f^{-1}|_{T_{f^{-k}(z)}V^u(\sigma_R^k\ul{R}))}),\text{ }\Jac(d_{I_k(f^{-k}(z))}f^{-1}d_{f^{-k}(z)}I_k|_{T_{f^{-k}(z)}V^u(\sigma_R^k\ul{R})})\geq \frac{1}{(2M_f)^d},\end{equation*}
since $|1\pm t|=e^{\pm2t}$ for all sufficiently small $t>0$, we get,
\begin{align}\label{circla}
\frac{\Jac(d_{f^{-k}(z)}f^{-1}\rvert_{T_{f^{-k}(z)}V^u(\sigma_R^k\ul{R})})}{\Jac(d_{I_k(f^{-k}(z))}f^{-1}\rvert_{T_{I_k(f^{-k}(z))}V^u(\sigma_R^k\ul{S})})}=&\exp\left(\pm 2C_1(2M_f)^d(4H_0+9M_f^d)\epsilon (p_0^u) ^\frac{\beta}{4} (e^\epsilon\omega_0) ^{\frac{k\beta}{4}}\tilde{\theta}^{n-k}\right)\nonumber\\
=&\exp\left(\pm\epsilon^\frac{3}{4} (p_0^u) ^\frac{\beta}{4} (e^\epsilon\omega_0) ^{\frac{k\beta}{4}}\tilde{\theta}^{n-k}\right)=\exp\left(\pm\epsilon^\frac{3}{4} (p_0^u) ^\frac{\beta}{4}(\frac{(e^\epsilon\omega_0) ^{\frac{\beta}{4}}}{\tilde{\theta}})^k(\tilde{\theta}^\frac{n}{m})^m\right)\nonumber\\
=&\exp\left(\pm\epsilon^\frac{3}{4} (p_0^u) ^\frac{\beta}{4}(\frac{(e^\epsilon\omega_0)^{\frac{\beta}{4}}}{\tilde{\theta}})^m(\tilde{\theta}^r)^m\right)\nonumber\\
=&\exp\left(\pm\epsilon^\frac{3}{4} (p_0^u) ^\frac{\beta}{4}\theta^m\right)=\exp(\pm\epsilon^2\theta^m).
\end{align}
The last line is due to the choice of $r$ in equation \eqref{defr}, and since $e^\epsilon\leq 2$ for $\epsilon>0$ small enough. We continue to bound the second term of equation \eqref{louis}. Define $f_{x_{i}x_{i+1}}:=\psi_{x_{i+1}}^{-1}\circ f\circ\psi_{x_i}$. By \cite[Proposition~2.21]{SBO}, we can write $$f_{x_ix_{i+1}}(v_s,v_u)=(D_s v_s+h_s(v_s,v_u),D_u v_u+h_u(v_s,v_u))$$ for $v_{u}=\pi_{u}\psi_{x_i}^{-1}\xi_i$, $v_{s}=\pi_{s}\psi_{x_i}^{-1}\xi_i$, $\forall\xi_i\text{ tangent to } \psi_{x_i}[R_{Q_\epsilon(x_i)}(0)]$,
where $\pi_s$ is the projection onto the $(d-u)$ first coordinates, $\kappa^{-1}\leq\|D_s^{-1}\|^{-1},\|D_s\|\leq e^{-\chi}$ and $e^\chi\leq\|D_u^{-1}\|^{-1},\|D_u\|\leq\kappa$,
$\|\frac{\partial(h_s,h_u)}{\partial(v_s,v_u)}\|<\epsilon$, and $\|\frac{\partial(h_s,h_u)}{\partial(v_s,v_u)}\rvert_{v_1}-\frac{\partial(h_s,h_u)}{\partial(v_s,v_u)}\rvert_{v_2}\|\leq\epsilon|v_1-v_2|^{\beta/3}$ on $R_{Q_\epsilon(x_i)}(0)$. A similar statement holds for $f_{x_ix_{i+1}}^{-1}$. See also Theorem \ref{pesinoseledec}. Then,
\begin{align}
|\psi_{x_{-k}}^{-1}(f^{-k}(I(z)))-\psi_{x_{-k}}^{-1}(I_k(f^{-k}(z)))|\leq& |\psi_{x_{-k}}^{-1}(f^{-k}(I(z)))-\psi_{x_{-k}}^{-1}(f^{-k}(z))| \nonumber \\
+&|\psi_{x_{-k}}^{-1}(f^{-k}(z))-\psi_{x_{-k}}^{-1}(I_k(f^{-k}(z)))|\nonumber\\
\leq&(\kappa+\epsilon)^kd_{C^0}(V^u(\ul{u}),V^u(\ul{v}))+d_{C^0}(V^u(\sigma_R^k\ul{u}),V^u(\sigma_R^k\ul{v}))\nonumber\\
\leq& (\kappa+\epsilon)^kd_{C^1}(V^u(\ul{u}),V^u(\ul{v}))+d_{C^1}(V^u(\sigma_R^k\ul{u}),V^u(\sigma_R^k\ul{v})).
\end{align}

By the estimates of equation \eqref{DeltaAir2} and equation \eqref{DeltaAir}, applied to the shifted sequences, we get $\forall 0\leq k\leq m$,
\begin{equation}\label{BestWest}
d_{C^1}(V^u(\sigma_R^k\ul{u}),V^u(\sigma_R^k\ul{v}))\leq (p^u_{-k})^\frac{\beta}{4}\tilde{\theta}^{n-k}
.
\end{equation}
In addition, by the admissibility of the chain $\ul{u}$ (or $\ul{v}$), $p_{0}^u\leq e^{\epsilon k}p_{-k}^u$ (see \cite[Definition~2.23]{SBO}).
Plugging this back in equation \eqref{BestWest} yields,
\begin{align}\label{circe}
|\psi_{x_{-k}}^{-1}(f^{-k}(I(z)))-\psi_{x_{-k}}^{-1}(I_k(f^{-k}(z)))|\leq&(\kappa+\epsilon)^k\tilde{\theta}^{n}(p_0^u)^\frac{\beta }{4}+(p_{-k}^u)^\frac{\beta}{4}\tilde{\theta}^{n-k}\nonumber\\
\leq &\left((e^\epsilon(\kappa+\epsilon))^m(\tilde{\theta}^{r})^m+\tilde{\theta}^{n-k}\right)(p_{-k}^u)^{\frac{\beta}{4}}\nonumber\\
\leq& \left((\kappa+1)^m(\tilde{\theta}^{r})^m+\tilde{\theta}^{n-k}\right)(p_{-k}^u)^{\frac{\beta}{4}}\leq 2(p_{-k}^u)^\frac{\beta}{4}\theta^m.
\end{align}
 Define $A_k:T_{f^{-k}(I(z))}V^u(\sigma_R^k\ul{S})\rightarrow T_{I_k(f^{-k}(z))}V^u(\sigma_R^k\ul{S})$, by $$A_k d_{\psi_{x_{-k}}^{-1}(f^{-k}(I(z)))}\psi_{x_{-k}}\Big(\begin{array}{c}
u\\
d_{\pi_u\psi_{x_{-k}}^{-1}(f^{-k}(I(z)))}F^{\sigma_R^k\ul{v}}u\\
\end{array}\Big):= d_{\psi_{x_{-k}}^{-1}(I_k(f^{-k}(z)))}\psi_{x_{-k}}\Big(\begin{array}{c}
u\\
d_{\pi_u\psi_{x_{-k}}^{-1}(I_k(f^{-k}(z)))}F^{\sigma_R^k\ul{v}}u\\
\end{array}\Big).$$

By carrying out the same type of estimates as in equation \eqref{ams}, it follows that

\begin{align*}
\|\Theta_{D_k}\mathrm{Id}_{T_{f^{-k}(z)}V^u(\sigma^k_R\ul{S})}-\Theta_{D_k} A_k\|\leq& 2\|C_\chi^{-1}(x_{-k})\|\cdot\Big(\text{H\"ol}_{\frac{\beta}{3}}(d_\cdot F^{\sigma_R^k\ul{v}})\cdot2|\psi_{x_{-k}}^{-1}(I_k(f^{-k}(z)))-\psi_{x_{-k}}^{-1}(f^{-k}(I(z)))|^\frac{\beta}{3}\\
+&2|\psi_{x_{-k}}^{-1}(I_k(f^{-k}(z)))-\psi_{x_{-k}}^{-1}(f^{-k}(I(z)))|\cdot2\Big).
\end{align*}

Together with equation \eqref{circe} we get
\begin{align*}
\|\Theta_{D_k}\mathrm{Id}_{T_{f^{-k}(z)}V^u(\sigma^k_R\ul{S})}-\Theta_{D_k} A_k\|\leq&8\|C_\chi^{-1}(x_{-k})\|(2p_{-k}^u\theta^m)^{\frac{\beta}{3}}\leq\epsilon^4(\theta^{\frac{\beta}{3}})^m.	
\end{align*}

It follows that $\Jac(A_k)=e^{\pm2d\epsilon^3(\theta^\frac{\beta}{3})^m}$. Next, as in equation \eqref{natityahu},
\begin{align*}
    &\|\Theta_{D_{k+1}}d_{f^{-k}(I(z))}f^{-1}|_{T_{f^{-k}(I(z))}V^u(\sigma_R^k\ul{S})}-\Theta_{D_{k+1}}d_{I_k(f^{-k}(z))}f^{-1}A_k\|=\\
    &=\|\Theta_{D_{k+1}}d_{f^{-k}(I(z))}f^{-1}\nu_{f^{-k}(I(z))}\Theta_{D_k}d_{f^{-k}(I(z))}\mathrm{Id}_{V^u(\sigma^k_R\ul{S})}-\Theta_{D_{k+1}}d_{I_k(f^{-k}(z))}f^{-1}\nu_{I_k(f^{-k}(z))}\Theta_{D_k}A_k\|\\
    &\leq \|A_k\|\cdot H_0 d(f^{-k}(I(z)),I_k(f^{-k}(z)))^\beta+M_f\|\Theta_{D_k}d_{f^{-k}(I(z))}\mathrm{Id}_{V^u(\sigma^k_R\ul{S})}-\Theta_{D_k}A_k\|\\
    &\leq 2H_0\cdot(2|\psi_{x_{-k}}^{-1}(I_k(f^{-k}(z)))-\psi_{x_{-k}}^{-1}(f^{-k}(I(z)))|)^\beta+M_f\epsilon^4(\theta^\frac{\beta}{3})^m\leq \epsilon^3(\theta^\frac{\beta}{3})^m.
\end{align*}
Therefore, \begin{align*}\frac{\Jac(d_{I_k(f^{-k}(z))}f^{-1}\rvert_{T_{I_k(f^{-k}(z))}V^u(\sigma_R^k\ul{S})})}{\Jac(d_{f^{-k}(I(z))}f^{-1}\rvert_{T_{f^{-k}(I(z))}V^u(\sigma_R^k\ul{S})})}=&\frac{\Jac(d_{I_k(f^{-k}(z))}f^{-1}A_k)}{\Jac(d_{f^{-k}(I(z))}f^{-1} |_{T_{f^{-k}(I(z))}V^u(\sigma_R^k\ul{S})})}\cdot\frac{1}{\Jac(A_k)}\\
=&e^{\pm(2(2M_f)^dC_1\epsilon^3(\theta^\frac{\beta}{3})^m+2d\epsilon^3(\theta^\frac{\beta}{3})^m)}.
\end{align*} This, equation \eqref{circla}, and equation \eqref{louis} yield: $$\frac{\Jac(d_{f^{-k}(z)}f^{-1}\rvert_{T_{f^{-k}(z)}f^{-k}[V^u(\ul{R})]})}{\Jac(d_{f^{-k}(I(z))}f^{-1}\rvert_{T_{f^{-k}(I(z))}f^{-k}[V^u(\ul{S})]})}=\exp(\pm2\epsilon^{2.5}(\theta^\frac{\beta}{3})^m)=\exp(\pm2\frac{\epsilon^{2.5}}{\theta^\frac{\beta}{3}}(\theta^\frac{\beta}{3r})^n)=\exp(\pm\epsilon^2(\theta^\frac{\beta}{3r})^n).$$
Hence, by equation \eqref{firsttwosums},
\begin{align*}|S_z^{\ul{R}}(y)-S_{I(z)}^{\ul{S}}(I(y))|\leq&(p_0^u)^\frac{\beta}{4}\frac{2 \epsilon^\frac{3}{4}}{\theta_3}\cdot(\theta_3^\frac{1}{r})^n+\epsilon^2\sum_{k=0}^m(\theta^\frac{\beta}{3r})^n\leq (p_0^u)^\frac{\beta}{4}\frac{2\epsilon^\frac{3}{4}}{\theta_3}\cdot(\theta_3^\frac{1}{r})^n+\epsilon^2n\cdot(\theta^\frac{\beta}{3r})^n\\
\leq&\frac{\epsilon^\frac{7}{4}}{2}\cdot(\theta_3^\frac{1}{r})^n+\frac{\epsilon^\frac{7}{4}}{2}(\theta^\frac{\beta}{6r})^n \text{ (for small enough }\epsilon)\leq \epsilon^\frac{7}{4}\cdot(\max\{\theta_3^\frac{1}{r},\theta^\frac{\beta}{6r}\})^n= \epsilon^\frac{7}{4} \theta_2^n,
\end{align*}
where we define \begin{equation}\label{theta2late}\theta_2:= \max\{\theta_3^\frac{1}{r},\theta^\frac{\beta}{6r}
\}= \max\left\{\left(e^{-\frac{\chi\beta}{8}}\right)^\frac{1}{r},\theta^\frac{\beta}{6r}
\right\}\geq\tilde{\theta}.\end{equation}  Plugging this into equation \eqref{BestWest2} yields (for $\epsilon$ sufficiently small),
\begin{equation}\label{BestWest4}|g^{\ul{R}}(y)-g^{\ul{S}}\circ I(y)| \leq e^\epsilon\Vol(V^u(\ul{R}))(2\epsilon^\frac{7}{4}\theta_2^n+\epsilon(p_0^u)^\frac{\beta}{4}\tilde{\theta}^n)\leq \epsilon^\frac{3}{2}\Vol(V^u(\ul{R}))\theta_2^n.\end{equation}

\medskip
\underline{Part 5:}

\begin{align*}
|\Vol(V^u(\ul{R}))-\Vol(V^u(\ul{S}))| =&|\int_O\Jac(d_t(\psi_{x_0}\tilde{F}^{\ul{u}}))d\mathrm{Leb}(t)-\int_O\Jac(d_t(\psi_{x_0}\tilde{F}^{\ul{v}}))d\mathrm{Leb}(t)|\\
\leq&\int_O|\Jac(d_t(\psi_{x_0}\tilde{F}^{\ul{u}}))-\Jac(d_t(\psi_{x_0}\tilde{F}^{\ul{v}}))|d\mathrm{Leb}(t)\\
=& \int_O\Jac(d_t (\psi_{x_0}\widetilde{F}^u))\cdot |1-\Jac(d_t(\psi_{x_0}\tilde{F}^{\ul{v}}))\Jac(d_{\psi_{x_0}\tilde{F}^{\ul{u}}(t)}(\psi_{x_0}\tilde{F}^{\ul{u}})^{-1}|d\mathrm{Leb}(t) \\
=& \int_O \Jac(d_t(\psi_{x_0}\widetilde{F}^u))\cdot|1-\Jac(d_{\psi_{x_0}\tilde{F}^{\ul{u}}(t)}I)|d\mathrm{Leb}(t).
\end{align*}
In equation \eqref{DeltaAir} we saw $|\Jac(d_\cdot I)-1|\leq \epsilon(p_0^u)^\frac{\beta}{4} \tilde{\theta}^n$. Therefore,

\begin{align}\label{BestWest5}
\frac{|\Vol(V^u(\ul{R}))-\Vol(V^u(\ul{S}))|}{\Vol(V^u(\ul{R}))} \leq&\frac{\epsilon(p_0^u)^\frac{\beta}{4} \tilde{\theta}^n\int_O \Jac(d_t(\psi_{x_0}\widetilde{F}^u))\mathrm{Leb}(t)}{\int_O \Jac(d_t(\psi_{x_0}\widetilde{F}^u))\mathrm{Leb}(t)}=\epsilon^\frac{3}{2}\tilde{\theta}^n,
\end{align}
when $\epsilon$ is sufficiently small.
 In particular,
\begin{align}\label{forpart5}
	|\Vol(V^u(\ul{R}))-\Vol(V^u(\ul{S}))|\leq \Vol(V^u(\ul{R}))\cdot \epsilon^\frac{3}{2}\tilde{\theta}^n\text{, and }
	\frac{\Vol(V^u(\ul{S}))}{\Vol(V^u(\ul{R}))}=e^{\pm\epsilon}.
\end{align}

\medskip
\underline{Part 6:} Recall, $g^{\ul{R}}= e^{\pm\epsilon}\Vol(V^u(\ul{R}))$ since $g^{\ul{R}}(y)=\int\limits_{V^u(\ul{R})}e^{S^{\ul{R}}_{z}(y)}d\overline{m}_{\ul{R}}(z)$ (and similarly for $g^{\ul{S}}$), and $\rho^{\ul{R}}=\frac{\Vol(V^u(\ul{R}))}{g^{\ul{R}}}$, as in Theorem \ref{naturalmeasures}. Hence

\begin{align*}
\|\rho^{\ul{R}}-\rho^{\ul{S}}\circ I\|=&\|\frac{\Vol(V^u(\ul{R}))}{g^{\ul{R}}}-\frac{\Vol(V^u(\ul{S}))}{g^{\ul{S}}\circ I}\|\\
\leq& \|\frac{1}{g^{\ul{R}}}\|\cdot\|\frac{1}{g^{\ul{S}}}\|\cdot\|\Vol(V^u(\ul{R}))(g^{\ul{R}}-g^{\ul{S}}\circ I)+g^{\ul{R}}\cdot(\Vol(V^u(\ul{S}))-\Vol(V^u(\ul{R})))\|\\
\leq& e^{2\epsilon}\frac{1}{\Vol(V^u(\ul{S}))}\|g^{\ul{R}}-g^{\ul{S}}\circ I\|+e^{3\epsilon}\frac{1}{\Vol(V^u(\ul{S}))}|\Vol(V^u(\ul{R}))-\Vol(V^u(\ul{S}))|\\
(\because\text{eq. }\eqref{BestWest4})\leq& \frac{\Vol(V^u(\ul{R}))}{\Vol(V^u(\ul{S}))}\Big[e^{2\epsilon}\frac{1}{\Vol(V^u(\ul{R}))}\epsilon^\frac{3}{2}\Vol(V^u(\ul{R}))\theta_2^n+e^{3\epsilon}\frac{1}{\Vol(V^u(\ul{R}))}\cdot\Vol(V^u(\ul{R}))\epsilon^\frac{3}{2}\tilde{\theta}^n\Big]\\
\leq& e^\epsilon\cdot \epsilon^\frac{3}{2}\cdot(e^{2\epsilon}+e^{3\epsilon})\cdot\theta_2^n\leq \epsilon \theta_2^n\text{ for all small enough }\epsilon.
\end{align*}
\end{proof}

\begin{claim}\label{VolHol}
For $n_0\in \mathbb{N}$ and $\theta_2\in(0,1)$ 
as in Lemma \ref{0.1}, if $d(\ul{R},\ul{S})=e^{-n}$, $n\geq n_0$, then $|\phi(\ul{R})-\phi(\ul{S})|\leq \sqrt{\epsilon}\theta_2^n$. In addition, $\phi$ has summable variations: $\sum_{k=2}^\infty \mathrm{var}_k(\phi)<\infty$, where $\mathrm{var}_k(\phi)=\sup\{|\phi(\ul{R})-\phi(\ul{S})|:d(\ul{R},\ul{S})\leq e^{-k}\}$.
\end{claim}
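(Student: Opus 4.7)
The plan is to deduce this Hölder estimate on $\phi$ by reassembling the uniform Jacobian comparisons already established in Lemma \ref{0.1}, rather than re-doing the technical work. First, I would rewrite $\phi$ in a form compatible with the existing estimates: the computation at the end of the proof of Theorem \ref{naturalmeasures} shows
\[
e^{\phi(\ul{R})}=m_{V^u(\sigma_R\ul{R})}\bigl(f^{-1}[V^u(\ul{R})]\bigr)=\lim_{k\to\infty}\frac{\Vol\bigl(f^{-(k+1)}[V^u(\ul{R})]\bigr)}{\Vol\bigl(f^{-k}[V^u(\sigma_R\ul{R})]\bigr)},
\]
and the same holds for $\ul S$. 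Consequently, bounding $|\phi(\ul R)-\phi(\ul S)|$ reduces to controlling the ratio of the two limits uniformly in $k$, and then passing to the limit $k\to\infty$.

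Next, using the diffeomorphism $I:V^u(\ul R)\to V^u(\ul S)$ from Part 0 of Lemma \ref{0.1} and change of variables, we have
\[
\Vol\bigl(f^{-(k+1)}[V^u(\ul S)]\bigr)=\int_{V^u(\ul R)}\Jac\bigl(d_{I(y)}f^{-(k+1)}\bigr)\,\Jac(d_yI)\,d\overline{m}_{\ul R}(y),
\]
so the numerator ratio becomes an integrated Jacobian ratio. The factor $\Jac(d_yI)$ is already controlled by \eqref{DeltaAir}: $\Jac(d_yI)=1\pm\epsilon(p_0^u)^{\beta/4}\tilde{\theta}^{\,n}$. The same change-of-variables, now applied to the $\sigma_R$-shifted chains via $I_1:V^u(\sigma_R\ul R)\to V^u(\sigma_R\ul S)$ (and using $d(\sigma_R\ul R,\sigma_R\ul S)=e^{-(n-1)}$, which is harmlessly absorbed by enlarging $n_0$), handles the denominator ratio.

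The heart of the proof is the pointwise comparison
\[
\log\Jac\bigl(d_yf^{-(k+1)}\bigr)-\log\Jac\bigl(d_{I(y)}f^{-(k+1)}\bigr)=\sum_{j=0}^{k}\Bigl[\log\Jac\bigl(d_{f^{-j}(y)}f^{-1}\bigr)-\log\Jac\bigl(d_{f^{-j}(I(y))}f^{-1}\bigr)\Bigr],
\]
where each Jacobian is restricted to the appropriate tangent space. I would split this sum at the threshold $m=\lfloor n/r\rfloor$ from \eqref{defr}. For $j\le m$, the per-step comparison built into Part 4 of Lemma \ref{0.1} (the computation culminating in the bound $\exp(\pm\epsilon^2\theta^m)$ in \eqref{circla} and the display that follows it) is exactly what is needed; summing gives a contribution of size $\lesssim m\epsilon^2\theta^m$, which is bounded by $\tfrac14\sqrt{\epsilon}\,\theta_2^{\,n}$. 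For $j>m$, the Hölder bound \eqref{forholderphi} applied to points lying in the exponentially shrinking leaves $f^{-j}[V^u(\sigma_R^j\ul R)]$ and $f^{-j}[V^u(\sigma_R^j\ul S)]$ yields a geometric tail whose total is also bounded by $\tfrac14\sqrt{\epsilon}\,\theta_2^{\,n}$. All bounds are uniform in $y\in V^u(\ul R)$ and in $k$.

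Assembling the pieces, the ratios of numerator and denominator are each $1\pm\tfrac12\sqrt{\epsilon}\,\theta_2^{\,n}$ uniformly in $k$; passing to the limit and taking logarithms yields $|\phi(\ul R)-\phi(\ul S)|\le\sqrt{\epsilon}\,\theta_2^{\,n}$ for $n\ge n_0$, where $n_0=n_0(\chi,\beta,f)$ absorbs the constants from the two short applications of Part 0 and from the $j\le m$/$j>m$ transition. Summable variations of $\phi$ follow at once from the geometric tail $\sum_{k\ge n_0}\sqrt{\epsilon}\,\theta_2^{\,k}<\infty$, with the finitely many initial variations $\mathrm{var}_k(\phi)$, $k<n_0$, being trivially finite (since $\phi\le 0$ and $|\phi|$ is bounded along any single chain by $d\log M_f$). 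The main obstacle is maintaining uniformity in $k$ in the telescoping estimate, and this is precisely what the two-regime split from Part 3 of Lemma \ref{0.1} was designed to handle — so the argument is essentially a bookkeeping exercise on top of the machinery already developed.
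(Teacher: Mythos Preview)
Your approach has a genuine gap. You claim that the numerator ratio $\Vol(f^{-(k+1)}[V^u(\ul R)])/\Vol(f^{-(k+1)}[V^u(\ul S)])$ and the denominator ratio are each $1\pm\tfrac12\sqrt\epsilon\,\theta_2^{\,n}$ \emph{uniformly in $k$}, but this is false once $k\ge n$. For $j\ge n$ the symbols $R_{-j}$ and $S_{-j}$ differ, so $f^{-j}(y)\in R_{-j}$ and $f^{-j}(I(y))\in S_{-j}$ lie in different partition elements, on leaves $V^u(\sigma_R^j\ul R)$ and $V^u(\sigma_R^j\ul S)$ that are not in any common chart. The single-step difference $\log\Jac(d_{f^{-j}(y)}f^{-1})-\log\Jac(d_{f^{-j}(I(y))}f^{-1})$ is then only $O(1)$, and summing over $j=n,\ldots,k$ gives a contribution growing with $k$. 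The bound \eqref{forholderphi} you invoke compares two points on the \emph{same} leaf; it cannot control this cross-leaf term. What \emph{is} true is that in the product of the two ratios these uncontrolled tails cancel, but your argument bounds them separately and therefore does not capture that cancellation.

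The paper circumvents this by first passing to the limit and then comparing: it writes $e^{\phi(\ul R)}=\int_{V^u(\ul R)}\Jac(d_yf^{-1})/g^{\sigma_R\ul R}(f^{-1}(y))\,d\overline m_{\ul R}(y)$, where $g^{\sigma_R\ul R}$ is the already-converged limit function from \eqref{JakeAmir}. The comparison of $\phi(\ul R)$ with $\phi(\ul S)$ then reduces to controlling $g^{\sigma_R\ul R}(f^{-1}(y))/g^{\sigma_R\ul S}(f^{-1}(I_1(y)))$, and this is exactly the content of \eqref{BestWest4} from Lemma \ref{0.1}. No $k\to\infty$ limit appears in the comparison step, so no uniformity-in-$k$ issue arises.

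A secondary point: your treatment of $\mathrm{var}_k(\phi)$ for $k<n_0$ assumes $|\phi|\le d\log M_f$ globally, but that bound is not established (the formula \eqref{philocallybounded} involves $\Vol(V^u(\ul R))/\Vol(V^u(\sigma_R\ul R))$, a ratio of leaves over different symbols). The paper handles this by showing directly, via \eqref{philocallybounded} and \eqref{forpart5}, that $|\phi(\ul R)-\phi(\ul S)|\le 2\epsilon$ whenever $d(\ul R,\ul S)\le e^{-2}$, a uniform bound on the variation itself rather than on $\phi$.
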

\begin{proof}
Let $n_0=\lceil r\rceil$, where $r$ and $n_0$ are constants given by equation \eqref{defr}. We assume $d(\ul{R},\ul{S})= e^{-n}\leq e^{-n_0}$
. Recall the formula from the statement of Theorem \ref{naturalmeasures},
\begin{align*}
    e^{\phi(\ul{R})}=&\lim_{m \rightarrow\infty}\frac{\Vol(f^{-m}f^{-1}[V^u(\ul{R})])}{\Vol(f^{-m}[V^u(\sigma_R\ul{R})])}=\lim_{m \rightarrow\infty}\int_{f^{-1}[V^u(\ul{R})]}\frac{d\overline{m}_{\sigma_R\ul{R}}(y)}{\int_{V^u(\sigma_R\ul{R})}\frac{\Jac(d_zf^{-m})}{\Jac(d_yf^{-m})}d\overline{m}_{\sigma_R\ul{R}}(z)}\\
    =&\int_{f^{-1}[V^u(\ul{R})]}\frac{d\overline{m}_{\sigma_R\ul{R}}(y)}{g^{\sigma_R\ul{R}}(y)}=\int_{V^u(\ul{R})}\frac{\Jac(d_yf^{-1})d\overline{m}_{\ul{R}}(y)}{\left(g^{\sigma_R\ul{R}}\circ f^{-1}\right)(y)},
\end{align*}
where $\overline{m}_{\ul{R}}$ and $\overline{m}_{\sigma_R\ul{R}}$ are the induced Riemannian volumes of $V^u(\ul{R})$ and $V^u(\sigma_R\ul{R})$, the Jacobians refer to the Jacobians of the restriction of the differential to the relevant unstable space, and $g^{\sigma_R\ul{R}}$ is defined in equation \eqref{JakeAmir}.

Assume 
w.l.o.g. that $n\geq2$. Consider the maps $I_1:V^u(\ul{R})\rightarrow V^u(\ul{S})$, $I_2:V^u(\sigma_R\ul{R})\rightarrow V^u(\sigma_R\ul{S})$ given by Lemma \ref{0.1}. Then,
\begin{equation}\label{phis}
e^{\phi(\ul{S})}=\int_{V^u(\ul{S})}\frac{\Jac(d_{y'}f^{-1})d\overline{m}_{\ul{S}}(y')}{g^{\sigma_R\ul{S}}\circ f^{-1}(y')}=\int_{V^u(\ul{R})}\frac{\Jac(d_{I_1(y)}f^{-1}) \Jac(d_{y}I_1) d\overline{m}_{\ul{R}}(y)}{\left(g^{\sigma_R\ul{S}}\circ f^{-1}\circ I_1\right)(y)}.
\end{equation}
 We now wish to bound the ratio between the integrands in the formul\ae \text{ }of $e^{\phi(\ul{S})}$ and $e^{\phi(\ul{R})}$. By equation \eqref{DeltaAir}, $\Jac(d_yI_1)=e^{\pm\epsilon \tilde{\theta}^n}$ where $\tilde{\theta}\in (0,1)$ is a constant depending only on $\chi,\beta$ (which equals $\theta^\frac{1}{6}$). By equation \eqref{circla} (with $m=k=0$), $\frac{\Jac(d_{I_1(y)}f^{-1})}{\Jac(d_yf^{-1})}=e^{\pm\epsilon^2\theta_2^n}$, where $\theta_2\in[\tilde{\theta},1)$ is a constant depending only on $\chi,\beta$ and is given by equation \eqref{theta2late}. We are therefore left to bound $\frac{\left(g^{\sigma_R\ul{S}}\circ f^{-1}\circ I_1\right)(y)}{\left(g^{\sigma_R\ul{R}}\circ f^{-1}\right)(y)}$:
 \begin{align}\label{chinesesubway}
     |g^{\sigma_R\ul{R}}(f^{-1}(y))-g^{\sigma_R\ul{S}}( f^{-1}(I_1(y)))|\leq&|g^{\sigma_R\ul{R}}( f^{-1}(y))-g^{\sigma_R\ul{S}}\circ I_2(f^{-1}(y))|\\
     +&|g^{\sigma_R\ul{S}}\circ I_2(f^{-1}(y))-g^{\sigma_R\ul{S}}(f^{-1}(I_1(y)))|\nonumber\\
     \leq& \epsilon^\frac{3}{2} \Vol(V^u(\sigma_R\ul{S}))\theta_2^{n-1}+ |g^{\sigma_R\ul{S}}\circ I_2(f^{-1}(y))-g^{\sigma_R\ul{S}}(f^{-1}(I_1(y)))|,\nonumber
 \end{align}
where the last line is by equation \eqref{BestWest4}
.
To bound the second summand, denote $t_1=\left(f^{-1}\circ I_1\right)(y),t_2=\left(I_2\circ f^{-1}\right)(y)$, then $\forall a,b\in V^u(\sigma_R\ul{S})$,

\noindent $S_{m,a}^{\sigma_R\ul{S}}(b):=\sum_{k=0}^{m-1}\log\Jac(d_{f^{-k}(a)}f^{-1}|_{T_{f^{-k}(a)}f^{-k}[V^u(\sigma_R\ul{S})]})$ $-\log\Jac(d_{f^{-k}(b)}f^{-1}|_{T_{f^{-k}(b)}f^{-k}[V^u(\sigma_R\ul{S})]})$, and
\begin{align}\label{bananot}
    |g^{\sigma_R\ul{S}}(t_2)-g^{\sigma_R\ul{S}}(t_1)|=&\lim_{m\rightarrow\infty}|\int_{V^u(\sigma_R\ul{S})}(e^{S_{m,z}^{\sigma_R\ul{S}}(t_2)}-e^{S_{m,z}^{\sigma_R\ul{S}}(t_1)})d\overline{m}_{\sigma_R\ul{S}}(z)|\nonumber\\
    \leq&\limsup_{m\rightarrow\infty}\int_{V^u(\sigma_R\ul{S})}e^{S_{m,z}^{\sigma_R\ul{S}}(t_1)}|e^{S_{m,z}^{\sigma_R\ul{S}}(t_2)-S_{m,z}^{\sigma_R\ul{S}}(t_1)}-1|d\overline{m}_{\sigma_R\ul{S}}(z)\nonumber\\
    =&\limsup_{m\rightarrow\infty}\int_{V^u(\sigma_R\ul{S})}e^{S_{m,z}^{\sigma_R\ul{S}}(t_1)}|e^{S_{m,t_1}^{\sigma_R\ul{S}}(t_2)}-1|d\overline{m}_{\sigma_R\ul{S}}(z)\nonumber\\
    \leq & e^\epsilon\Vol(V^u(\sigma_R\ul{S}))\sup_m\left|\frac{\Jac(d_{t_1}f^{-m}|_{T_{t_1}V^u(\sigma_R\ul{S})})}{\Jac(d_{t_2}f^{-m}|_{T_{t_2}V^u(\sigma_R\ul{S})})}-1\right|,
\end{align}
where we use equation \eqref{forholderphi} (with $n=0$) to bound $S_{m,z}^{(\cdot)}(b)=\pm\epsilon$. In addition, equation \eqref{forholderphi} (with $n=0$) also implies $\frac{\Jac(d_{t_2}f^{-m}|_{T_{t_2}V^u(\sigma_R\ul{S})})}{\Jac(d_{t_1}f^{-m}|_{T_{t_1}V^u(\sigma_R\ul{S})})}=e^{\pm\epsilon\cdot d(t_1,t_2)^\frac{\beta}{4}}$. We wish to bound the exponent:
\begin{align*}d(t_1,t_2)=d(f^{-1}(I_1(y)),I_2(f^{-1}(y)))\leq &d(f^{-1}(I_1(y)),f^{-1}(y))+d(f^{-1}(y),I_2(f^{-1}(y)))\\
\leq &M_f d(y,I_1(y))+d(f^{-1}(y),I_2(f^{-1}(y)));
\end{align*}
 equation \eqref{adidax} bounds this by $M_f2d_{C^0}(V^u(\ul{R}),V^u(\ul{S}))+2d_{C^0}(V^u(\sigma_R\ul{R}),V^u(\sigma_R\ul{S}))$.

By equation \eqref{DeltaAir3.0}, for all small enough $\epsilon>0$, $d_{C^0}(V^u(\ul{R}),V^u(\ul{S}))\leq \epsilon^\frac{10}{\beta} \left(\theta^\frac{1}{6}\right) ^{n}$  and $d_{C^0}(V^u(\sigma_R\ul{R}),V^u(\sigma_R\ul{S}))\leq \epsilon^\frac{10}{\beta} \left(\theta^\frac{1}{6}\right) ^{n-1}$. Thus in total,
\begin{align*}
|g^{\sigma_R\ul{S}}(t_2)-g^{\sigma_R\ul{S}}(t_1)|\leq& e^\epsilon \Vol(V^u(\sigma_R\ul{S})) \cdot2\cdot\epsilon\cdot \left(4M_f\cdot \epsilon^\frac{10}{\beta} \left(\theta^\frac{1}{6}\right)^{n-1}\right)^\frac{\beta}{4}
\leq 2\epsilon^2 \Vol(V^u(\sigma_R\ul{S})) \left(\theta^\frac{\beta}{6\cdot 4}\right)^{n-1}\\
\leq &2\epsilon^2 \Vol(V^u(\sigma_R\ul{S})) \theta_2^{n-1},
\end{align*}
where $\theta_2\geq \theta^\frac{\beta}{6r}$ and $r>\frac{6}{\beta}>4$ is given by equation \eqref{defr}
. Plugging this back in equation \eqref{chinesesubway} yields,
\begin{align*}\label{chinesesubway2}
	|g^{\sigma_R\ul{R}}(f^{-1}(y))-g^{\sigma_R\ul{S}}(f^{-1}(I_1(y)))|\leq 2\epsilon^2 \Vol(V^u(\sigma_R\ul{S})) \theta_2^{n-1}+ \epsilon^\frac{3}{2} \Vol(V^u(\sigma_R\ul{S}))\theta_2^{n-1}\leq 2\epsilon^\frac{3}{2}\Vol(V^u(\sigma_R\ul{S}))\theta_2^{n-1}.
\end{align*}
Putting this together with the fact that $g^{\sigma_R\ul{S}}=e^{\pm\epsilon}\Vol(V^u(\sigma_R\ul{S}))$ (since $S_{m,z}^{\sigma_R\ul{S}}(\cdot)=\pm\epsilon$, $\forall z\in V^u(\sigma_R\ul{S})$), gives us,
$$\frac{g^{\sigma_R\ul{R}}(f^{-1}(y))}{g^{\sigma_R\ul{S}}(f^{-1}(I_1(y)))}=\exp(\pm4\epsilon^\frac{3}{2}\frac{1}{\theta_2}\theta_2^n)=\exp(\pm\epsilon\theta_2^n),$$
when $\epsilon$ is sufficiently small.
Now, by equation \eqref{phis} and the bounds $\frac{\Jac(d_{I_1(y)}f^{-1})}{\Jac(d_yf^{-1})}=e^{\pm\epsilon^2\theta_2^n}$, and since 
$\Jac(d_yI_1)=e^{\pm\epsilon \tilde{\theta}^n}$ (by equation \eqref{DeltaAir}),
$$\frac{e^{\phi(\ul{S})}}{e^{\phi(\ul{R})}}=\exp(\pm(\epsilon\theta_2^n+\epsilon^2\theta_2^n+ \epsilon \tilde{\theta}^n))=\exp(\pm\sqrt{\epsilon}\theta_2^n)\text{ (for small enough }\epsilon).$$

\medskip
We are then left to show summable variations. It is enough to show that $|\phi(\ul{R})-\phi(\ul{S})|$ is bounded uniformly whenever $d(\ul{R},\ul{S})\leq e^{-2}$. Indeed,
\begin{equation}\label{philocallybounded}
    \phi(\ul{R})=\log\int_{f^{-1}[V^u(\ul{R})]}\frac{d\overline{m}_{\sigma_R\ul{R}}(y)}{g^{\sigma_R\ul{R}}(y)}=\log\left(e^{\pm\epsilon}\frac{\Vol(f^{-1}[V^u(\ul{R})])}{\Vol(V^u(\sigma_R\ul{R}))}\right).
\end{equation}
By equation \eqref{forpart5}, whenever $d(\ul{R},\ul{S})\leq e^{-2}$, $\Vol(V^u(\ul{R}))=e^{\pm\epsilon}\Vol(V^u(\ul{S})),\Vol(V^u(\sigma_R\ul{R}))=e^{\pm\epsilon}\Vol(V^u(\sigma_R\ul{S}))$. Therefore,
\begin{equation}\label{phibounded}|\phi(\ul{R})-\phi(\ul{S})|\leq4\epsilon.\end{equation}
\end{proof}

\begin{cor}\label{g-Holder}
 $\forall\ul{R}\in \Sig_L$, the function $\rho^{\ul{R}}=\frac{dm_{V^u(\ul{R})}}{d\lambda_{V^u(\ul{R})}}:V^u(\ul{R})\rightarrow [e^{-\epsilon},e^\epsilon]$ is H\"older continuous, with a H\"older constant and exponent uniform in $\ul{R}$. 
\end{cor}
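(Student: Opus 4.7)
\medskip
\noindent\textbf{Proof plan.} The plan is to reduce H\"older regularity of $\rho^{\ul{R}}$ to H\"older regularity of the denominator $g^{\ul{R}}(y):=\int_{V^u(\ul{R})}e^{S_z^{\ul{R}}(y)}d\overline{m}_{\ul{R}}(z)$, and then to exploit a telescoping cancellation to reuse the estimate \eqref{forholderphi} already established in the proof of Theorem \ref{naturalmeasures}. By that proof, $\rho^{\ul{R}}(y)=\Vol(V^u(\ul{R}))/g^{\ul{R}}(y)$ and $g^{\ul{R}}=e^{\pm\epsilon}\Vol(V^u(\ul{R}))$, so for any $y_1,y_2\in V^u(\ul{R})$,
$$
|\rho^{\ul{R}}(y_1)-\rho^{\ul{R}}(y_2)|\leq\frac{e^{2\epsilon}}{\Vol(V^u(\ul{R}))}|g^{\ul{R}}(y_1)-g^{\ul{R}}(y_2)|\leq e^{3\epsilon}\sup_{z\in V^u(\ul{R})}\bigl|e^{S_z^{\ul{R}}(y_1)-S_z^{\ul{R}}(y_2)}-1\bigr|.
$$
Hence it suffices to prove $|S_z^{\ul{R}}(y_1)-S_z^{\ul{R}}(y_2)|\leq \epsilon\,d(y_1,y_2)^{\beta/4}$, uniformly in $z\in V^u(\ul{R})$ and in $\ul{R}\in\Sig_L$.

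Next I would exploit the chain rule. Since $\Jac(d_yf^{-N}|_{T_yV^u(\ul{R})})=\prod_{k=0}^{N-1}\Jac(d_{f^{-k}(y)}f^{-1}|_{T_{f^{-k}(y)}f^{-k}[V^u(\ul{R})]})$, telescoping the inner partial sums yields
$$
\sum_{k=0}^{N-1}\Bigl[\log\Jac(d_{f^{-k}(z)}f^{-1}|\cdots)-\log\Jac(d_{f^{-k}(y)}f^{-1}|\cdots)\Bigr]=\log\Jac(d_zf^{-N}|_{T_zV^u(\ul{R})})-\log\Jac(d_yf^{-N}|_{T_yV^u(\ul{R})}),
$$
so passing to the limit $N\to\infty$ in the definition of $S_z^{\ul{R}}(y)$ (the limit exists by the uniform estimate of the proof of Theorem \ref{naturalmeasures}), the $z$-dependent terms cancel in the difference and
$$
S_z^{\ul{R}}(y_1)-S_z^{\ul{R}}(y_2)=\lim_{N\to\infty}\Bigl[\log\Jac(d_{y_2}f^{-N}|_{T_{y_2}V^u(\ul{R})})-\log\Jac(d_{y_1}f^{-N}|_{T_{y_1}V^u(\ul{R})})\Bigr].
$$

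Finally, applying \eqref{forholderphi} with $n=0$ (i.e.\ to volume forms on $T_{y_1}V^u(\ul{R})$ and $T_{y_2}V^u(\ul{R})$) gives, uniformly in $N\geq 0$,
$$
\bigl|\log\Jac(d_{y_1}f^{-N}|_{T_{y_1}V^u(\ul{R})})-\log\Jac(d_{y_2}f^{-N}|_{T_{y_2}V^u(\ul{R})})\bigr|\leq \epsilon\,d(y_1,y_2)^{\beta/4}.
$$
Taking $N\to\infty$ yields $|S_z^{\ul{R}}(y_1)-S_z^{\ul{R}}(y_2)|\leq \epsilon\,d(y_1,y_2)^{\beta/4}$ with a bound independent of $z$ and of $\ul{R}$. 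Inserting this in the first display and using $|e^t-1|\leq 2|t|$ for $|t|\leq 1$ gives $|\rho^{\ul{R}}(y_1)-\rho^{\ul{R}}(y_2)|\leq 2e^{3\epsilon}\epsilon\,d(y_1,y_2)^{\beta/4}$ (for $d(y_1,y_2)$ small; otherwise the trivial bound $\rho^{\ul{R}}\in[e^{-\epsilon},e^{\epsilon}]$ together with compactness of $M$ absorbs the case of large distances into the H\"older constant). The H\"older exponent $\beta/4$ and H\"older constant depend only on $\epsilon,\beta,M,f$, and thus are uniform in $\ul{R}\in\Sig_L$.

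The conceptual core is the telescoping observation that makes $S_z^{\ul{R}}(y_1)-S_z^{\ul{R}}(y_2)$ independent of $z$ in the limit, reducing the question to the single-orbit H\"older regularity bound \eqref{forholderphi}; no new analytic work beyond what was done for Theorem \ref{naturalmeasures} is needed.
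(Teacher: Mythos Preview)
Your proof is correct and follows essentially the same approach as the paper. The paper's proof simply cites the estimate already derived in equation \eqref{bananot} of Claim \ref{VolHol} (which is precisely your telescoping observation $S_{m,z}^{\ul{R}}(y_1)-S_{m,z}^{\ul{R}}(y_2)=S_{m,y_1}^{\ul{R}}(y_2)$ together with \eqref{forholderphi} at $n=0$), obtaining $|g^{\ul{R}}(t)-g^{\ul{R}}(s)|\leq 2\epsilon e^\epsilon\Vol(V^u(\ul{R}))\,d(t,s)^{\beta/4}$ and then passing to $\rho^{\ul{R}}=\Vol/g^{\ul{R}}$ exactly as you do.
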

\begin{proof}
From equation \eqref{bananot} 
and the calculations made right after it we get, $$|g^{\ul{R}}(t)-g^{\ul{R}}(s)|\leq e^\epsilon \Vol(V^u(\ul{R}))(2\cdot\epsilon\cdot d(t,s)^\frac{\beta}{4}),\text{ }\forall \ul{R}\in\Sig_L, t,s\in V^u(\ul{R}).$$
Hence $\frac{g^{\ul{R}}}{\Vol(V^u(\ul{R}))}=\frac{1}{\rho^{\ul{R}}}$ is $(2\epsilon e^\epsilon,\frac{\beta}{4})$-H\"older continuous. Since $\rho^{\ul{R}}=e^{\pm\epsilon}$, $\rho^{\ul{R}}$ is $(2\epsilon e^{3\epsilon},\frac{\beta}{4})$-H\"older continuous.
\end{proof}


\subsection{Absolutely Continuous Leaf Measures}\label{acLeafMeasures}
In \textsection \ref{smoothies} we constructed smooth leaf measures with a simple transformation law for the action of $f^{-1}$. However, as explained in section \ref{ProofOutline} (step 3), these measures do not necessarily have a simple transformation for the action of $f$, because of overlaps between  unstable leaves $V^u(\underline{S})$ associated to different $\underline{S}\in\sigma_R^{-1}(\underline{R})$.

 In this section we use the canonical coding (see \textsection \ref{CanoCodi}) to restrict the measures $m_{V^u(\underline{R})}$  to absolutely continuous measures $\mu_{\underline{R}}\ll m_{V^u(\underline{R})}$ which do admit a simple transformation law for the action of $f$ (Proposition \ref{PropMuR}) . 
 This is important for us, because the transformation law in Proposition \ref{PropMuR} implies that $\int_{\Sig_L} \mu_{\underline{R}}dp$ is $f$-invariant, whenever $p$ is a $\phi$-conformal measure.  See the proof of Theorem \ref{theorem1} below.
 
 %
%

\begin{definition}\label{A_R}
Define
\begin{align*}
	\Sig_R:=&\{(S_i')_{i\geq 0}: (S_i')_{i\in\mathbb{Z}}\in \Sig\},\\
	\Sig_R^\circ:=&\{\ul{S}\in\mathcal{R}^\mathbb{N}:\bigcap_{i=0}^\infty f^{-i}[S_i]\neq\varnothing\}.
\end{align*}
For any $\ul{S}\in \Sig_R$, define a local stable leaf $V^s(\ul{S}):=W(S_0)\cap V^s(\ul{u})$ for some (any) $\ul{u}\in \Sigma$ s.t. $(u_i)_{i\geq0}\CAR\ul{S}$ (see Lemma \ref{properness}). For any $R\in\mathcal{R}$, define $$A_R:=\bigcupdot\{V^s(\ul{S}):\ul{S}\in\Sig_R^\circ,S_0=R\}.$$
\end{definition}
As in the remark after \ref{canonicparts}, $\Sig_R^\circ$ is invariant.
\begin{lemma}\label{ARdisjoint}
	Definition \ref{A_R} is proper: the union $\bigcupdot\{V^s(\ul{S}):\ul{S}\in\Sig_R^\circ,S_0=R\}$ is indeed disjoint.
\end{lemma}
\begin{proof}
Let $R\in\mathcal{R}$, and let $\ul{S}^1,\ul{S}^2\in [R]\cap\Sig_R^\circ$. Let $\ul{R}\in [R]\cap\Sig_L^\circ$. Let $a\in \bigcap_{i\geq0}f^{i}[R_{-i}]$, $b_1\in \bigcap_{i\geq0}f^{-i}[S^1_{i}]$, and  $b_2\in \bigcap_{i\geq0}f^{-i}[S^2_{i}]$. Write $x:=[a,b_1]_R$ and $y:=[a,b_2]_R$ (recall Definition \ref{Doomsday}). We claim that if $V^s(\ul{S}^1)\cap V^s(\ul{S}^2)\neq\varnothing$, then $x=y$, thus $\ul{S}^1=(R(f^i(x)))_{i\geq0}= (R(f^i(y)))_{i\geq0} =\ul{S}^2$.

Since $\{x\}=V^u(\ul{R})\cap V^s(\ul{S}^1)$, $\{y\}=V^u(\ul{R})\cap V^s(\ul{S}^2)$, it is enough to show $V^s(\ul{S}^1)\cap V^s(\ul{S}^2)\neq\varnothing\Rightarrow V^s(\ul{S}^1) = V^s(\ul{S}^2)$.

Indeed, since $V^s(\ul{S}^1)$ and $V^s(\ul{S}^2)$ extend over the same window $W(R)$ (Definition \ref{unstablemanifold}), and two stable leaves which span over the same window either coincide or are disjoint (recall Lemma \ref{properness}, and see \cite[Proposition~4.15]{SBO}).
\end{proof}

\begin{claim}\label{ARMeasurability}
	For $S\in\mathcal{R}$, $A_S$ is Borel measurable.
\end{claim}
\begin{proof}
Fix $\ul{S}\in \Sig_L^\circ\cap [S]$. $W^u(\ul{S})=\bigcap_{j\geq0}f^j[S_{-j}]$ is measurable. Let $\tau:\{\ul{S}\}\times \left([S]\cap\Sig_R\right)\rightarrow [S]\cap\Sig_R$ be the projection onto the non-negative coordinates. This map is continuous and one-to-one on its domain. $[S]\cap\Sig_R^\circ=\tau\circ \ul{R}[W^u(\ul{S})]$, where $\ul{R}(x):=(R(f^i(x)))_{i\in\mathbb{Z}}$ is the itinerary of $x$.\footnote{To see this, show double inclusion. Given $\ul{S}^+\in [S]\cap \Sig_R^\circ$, let $\ul{S}^\pm:=\ul{S}\cdot \ul{S}^+$ be an admissible concatenation, and so $\ul{S}^+=\tau(\ul{S}^\pm)$ while $\Sig^\circ\ni\ul{S}^\pm=\ul{R}(\widehat{\pi}(\ul{S}^\pm))$ and $\widehat{\pi}(\ul{S}^\pm)\in W^u(\ul{S})$. For the other inclusion, take $x\in W^s(\ul{S})$, then $\tau\circ\ul{R}(x)\in[S]\cap \Sig_R^\circ$.} $\ul{R}[W^u(\ul{S})]\subseteq \{\ul{S}\}\times [S]\cap\Sig_R$. Thus $[S]\cap \Sig_R^\circ$ is Borel measurable.

Let $\ul{S}^+,\ul{Q}^+\in [S]\bigcap\Sig_R^\circ$ s.t. $\ul{S}^+\neq \ul{Q}^+$
. By Lemma \ref{ARdisjoint}
$V^s(\ul{S}^+)\cap V^s(\ul{Q}^+)=\varnothing$.

Fix a chart $u=\psi_{x_0}^{p^s_0,p^u_0}$ s.t. $Z(u)\supseteq S$. $\forall \ul{S}^+\in[S]\cap\Sig_R^\circ$, let $F_{\ul{S}^+}$ be the representing function of $V^s(\ul{S}^+)$ in $u$. Recall the definition of $W(S)$ in Definition \ref{unstablemanifold}. Consider now the map $\Xi:\left([S]\cap \Sig_R^\circ\right)\times
\Big(\pi_u\circ \psi_{x_0}^{-1}[W(S)]\Big)\rightarrow M$ (where $\pi_u$ is the projection onto the $u$-ccordinates), $(\ul{S}^+,t)\mapsto
\psi_{x_0}\Big((t,F_{\ul{S}^+}(t))\Big)$. This map is continuous, and by the previous paragraph, one-to-one. Also, $\Xi\Big[\Big([S]\cap \Sig_R^\circ\Big)\times
\Big(\pi_u\circ\psi_{x_0}^{-1}[W(S)]\Big)\Big]=A_S$. Continuous injective images of Borel sets are Borel sets, hence $A_S$ is a Borel set.
\end{proof}

\begin{claim}\label{bisligrill}
$\forall \ul{R}\in \Sig_L^\circ$, $m_{\ul{R}}=\mathbb{1}_{A_{R_0}}\cdot m_{V^u(\ul{R})}$.
\end{claim}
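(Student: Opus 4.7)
The claim amounts to showing that $W^u(\ul{R})\cap V^u(\ul{R})=A_{R_0}\cap V^u(\ul{R})$, since by definition $m_{\ul{R}}=\mathbb{1}_{W^u(\ul{R})}\cdot m_{V^u(\ul{R})}$. I would prove set equality directly, via two inclusions.

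For $W^u(\ul{R})\cap V^u(\ul{R})\subseteq A_{R_0}\cap V^u(\ul{R})$: take $x\in W^u(\ul{R})\cap V^u(\ul{R})$. Since $W^u(\ul{R})\subseteq R_0\subseteq\bigcupdot\mathcal{R}=\pi[\Sigma^\#]$ (Proposition~\ref{cafeashter}), the forward canonical coding $\ul{S}^+:=(R(f^i(x)))_{i\geq0}$ lies in $\Sig_R^\circ$ with $S^+_0=R_0$. Pick any $\ul{v}\in\Sigma^\#$ with $\pi(\ul{v})=x$; by using Lemma~\ref{nextstepcover} iteratively forward (in its stable-side analog), we may assume $Z(v_i)\supseteq S^+_i$ for all $i\geq0$. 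Then $x\in V^s(\ul{v})\cap W(R_0)=V^s(\ul{S}^+)$, where the equality is the stable-side analog of Lemma~\ref{properness}. Hence $x\in A_{R_0}$.

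For the reverse inclusion $A_{R_0}\cap V^u(\ul{R})\subseteq W^u(\ul{R})$: take $x\in V^u(\ul{R})\cap V^s(\ul{S})$ for some $\ul{S}\in\Sig_R^\circ$ with $S_0=R_0$. Since $\ul{R}\in\Sig_L^\circ$ we may pick $y\in W^u(\ul{R})\subseteq R_0$, and since $\ul{S}\in\Sig_R^\circ$ we may pick $y'\in\bigcap_{i\geq0}f^{-i}[S_i]\subseteq R_0$. Both $y,y'\in R_0$, so the Smale bracket $z:=[y,y']_{R_0}$ is defined and lies in $R_0$; by the Markov property (Definition~\ref{Doomsday}(2)(c)) its partition coding satisfies $R(f^{-j}(z))=R_{-j}$ for $j\geq0$ and $R(f^i(z))=S_i$ for $i\geq0$. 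Thus $z\in W^u(\ul{R})$, which is contained in $V^u(\ul{R})$ by the construction of the Bowen–Sinai refinement, and a symmetric argument places $z\in V^s(\ul{S})$. To conclude $x=z$, I invoke the local product structure: in a common Pesin chart that covers $R_0$, both $V^u(\ul{R})$ and $V^s(\ul{S})$ are $(1+\beta/3)$-regular graphs in complementary directions (Definition~\ref{def135}, Definition~\ref{admissible}), so their intersection is a singleton—this is the precise analog of $\{\pi(\ul{u})\}=V^u(\ul{u})\cap V^s(\ul{u})$ in Claim~\ref{chikfila}, applied to chains whose negative coordinates cover $\ul{R}$ and whose positive coordinates cover $\ul{S}$. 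Therefore $x=z\in W^u(\ul{R})$.

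The main obstacle is the reverse inclusion, and within it the step of certifying that the Smale bracket $z$ lands in $V^u(\ul{R})\cap V^s(\ul{S})$ rather than merely in $R_0$ with the correct partition coding. This requires unpacking the fact, implicit in the Bowen–Sinai construction, that $W^u(\ul{R})$ sits inside $V^u(\ul{R})$ (and the stable analog), so that Markov-coding agreement upgrades to leaf membership; once this is in place, transversality in the Pesin chart forces uniqueness of the intersection and closes the argument.
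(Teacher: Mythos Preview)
Your proposal is correct and follows essentially the same route as the paper. Both proofs reduce the claim to the set equality $W^u(\ul{R})=V^u(\ul{R})\cap A_{R_0}$, handle the easy inclusion via the forward canonical coding, and for the reverse inclusion use the Smale bracket of a point in $W^u(\ul{R})$ with a point in $\bigcap_{i\geq0}f^{-i}[S_i]$; the only cosmetic difference is that the paper asserts directly that $x=[y,z]_{R_0}$ (``one can easily check''), whereas you form the bracket first and then invoke the transversal uniqueness of $V^u(\ul{R})\cap V^s(\ul{S})$ to conclude it coincides with $x$---this is exactly the content of the paper's unchecked step.
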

\begin{proof}
Recall, $m_{\ul{R}}=\mathbb{1}_{W^u(\ul{R})}\cdot m_{V^u(\ul{R})}$, where $W^u(\ul{R})=\bigcap_{j\geq0}f^j[R_{-j}]$. We will show that $W^u(\ul{R})=V^u(\ul{R})\cap A_{R_0}$. The inclusion $\subseteq$ is easy: Fix $x\in W^u(\ul{R})$ and let $\ul{R}(x):=$ the itinerary of $x$, then $(\ul{R}(x)_i)_{i\geq0}\in\Sig_R^\circ$, and $\{x\}=\{\widehat{\pi}(\ul{R}(x))\}=V^u(\ul{R})\cap V^s((\ul{R}(x)_i)_{i\geq0})$. Now for the other inclusion: Let $x\in V^u(\ul{R})\cap A_{R_0}$, then $x\in V^s(\ul{S}),\ul{S}\in\Sig_R^\circ,S_0=R_0$. Let $y\in W^u(\ul{R})$, $z\in\bigcap_{i\geq0}f^{-i}[S_i]$. One can easily check that $x$ must equal $[y,z]_{R_0}$; whence, by the Markov property, $(\ul{R}(x)_i)_{i\leq0}=\ul{R}$. So $x\in W^u(\ul{R})$.
\end{proof}
\begin{definition}\label{muR}
{\em The extended space of absolutely continuous measures}:
$\forall \ul{R}\in \Sig_L$, $\mu_{\ul{R}}:=\mathbb{1}_{A_{R_0}}\cdot m_{V^u(\ul{R})}$.
\end{definition}

By Claim \ref{bisligrill}, if $\ul{R}\in \Sig^\circ_L$ then $\mu_{\ul{R}}=m_{\ul{R}}$.

\begin{lemma}\label{0.3}
$\forall h\in C(M)$, $\psi_h(\ul{R}):=\mu_{\ul{R}}(h)$ is continuous on $\Sig_L$.
\end{lemma}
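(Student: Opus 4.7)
}

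The plan is to fix $\underline{R}^*\in\Sig_L$ and a sequence $\underline{R}^{(n)}\to \underline{R}^*$ in $\Sig_L$, and show $\mu_{\underline{R}^{(n)}}(h)\to\mu_{\underline{R}^*}(h)$. Since $d(\underline{R}^{(n)},\underline{R}^*)\to 0$ in the symbolic metric, we have $R^{(n)}_0=R^*_0=:R_0$ for every $n$ large enough, so without loss of generality we may assume this equality holds for all $n$. In particular the discontinuous factor $\mathbb{1}_{A_{R_0}}$ appearing in $\mu_{\underline{R}}=\mathbb{1}_{A_{R_0}}\cdot m_{V^u(\underline{R})}$ is \emph{the same set} for every measure in the sequence; only the underlying leaf and its density change.

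Next, I would invoke the diffeomorphism $I_n:V^u(\underline{R}^*)\to V^u(\underline{R}^{(n)})$ provided by Lemma \ref{0.1} and apply it as a change of variables to write
\begin{equation*}
\psi_h(\underline{R}^{(n)})=\int_{V^u(\underline{R}^*)}(h\circ I_n)\cdot\bigl(\mathbb{1}_{A_{R_0}}\circ I_n\bigr)\cdot\bigl(\rho^{\underline{R}^{(n)}}\circ I_n\bigr)\cdot\Jac(d_\cdot I_n)\,d\lambda_{V^u(\underline{R}^*)}.
\end{equation*}
The ``smooth" factors in the integrand converge uniformly on $V^u(\underline{R}^*)$: $h\circ I_n\to h$ uniformly because $h$ is uniformly continuous on the compact $M$ and $\sup d(I_n(y),y)\leq 2d_{C^0}(V^u(\underline{R}^*),V^u(\underline{R}^{(n)}))\to 0$ by equation \eqref{adidax} combined with Claim \ref{VRlikeVu}; $\rho^{\underline{R}^{(n)}}\circ I_n\to\rho^{\underline{R}^*}$ uniformly by the main estimate of Lemma \ref{0.1}; and $\Jac(d_\cdot I_n)\to 1$ uniformly by equation \eqref{DeltaAir}. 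So, setting these factors aside, the integrand $(h\circ I_n)(\rho^{\underline{R}^{(n)}}\circ I_n)\Jac(d_\cdot I_n)$ converges uniformly to $h\cdot\rho^{\underline{R}^*}$ on $V^u(\underline{R}^*)$, and is uniformly bounded by $2\|h\|_\infty e^\epsilon$.

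The remaining step, and the main obstacle, is to control the indicator factor $\mathbb{1}_{A_{R_0}}\circ I_n$. My proposal is to replace $I_n$ in the argument by the stable holonomy $\pi^n_s:V^u(\underline{R}^*)\to V^u(\underline{R}^{(n)})$ that sends $y$ to the unique point of $V^u(\underline{R}^{(n)})$ lying on the same local stable leaf as $y$ (which exists by transversality of the stable/unstable directions in $W(R_0)$ and by the local product structure of the partition). By construction $\pi^n_s$ sends stable leaves to themselves, hence $\pi^n_s\bigl[V^u(\underline{R}^*)\cap A_{R_0}\bigr]=V^u(\underline{R}^{(n)})\cap A_{R_0}$ \emph{exactly}. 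Moreover $I_n(y)$ and $\pi^n_s(y)$ both lie on $V^u(\underline{R}^{(n)})$ at distance $O(d_{C^0}(V^u(\underline{R}^*),V^u(\underline{R}^{(n)})))$ from $y$, so writing the change of variables via $\pi^n_s$ removes the indicator issue altogether. The continuity then follows from dominated convergence once one verifies that $\pi^n_s\to\mathrm{Id}$ and $\Jac(d_\cdot\pi^n_s)\to 1$ uniformly on $V^u(\underline{R}^*)\cap A_{R_0}$, together with $\rho^{\underline{R}^{(n)}}\circ\pi^n_s\to\rho^{\underline{R}^*}$ uniformly (which one gets from Lemma \ref{0.1} together with the H\"older estimate in Corollary \ref{g-Holder} applied to the small discrepancy between $\pi^n_s(y)$ and $I_n(y)$).

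The hard part is the absolute continuity statement for $\pi^n_s$, i.e.\ $\Jac(d_\cdot\pi^n_s)\to 1$ uniformly as the target leaf converges in $C^1$ to the source. A ``hands-on" proof of this within the current symbolic framework would compare $\pi^n_s$ with $I_n$: in the chart $\psi_{x_0}$ both maps move points a distance $O(d_{C^0}(V^u(\underline{R}^*),V^u(\underline{R}^{(n)})))\to 0$, and since $I_n$'s Jacobian already tends to $1$ by equation \eqref{DeltaAir}, the task reduces to bounding the Jacobian of the correction $\pi^n_s\circ I_n^{-1}:V^u(\underline{R}^{(n)})\to V^u(\underline{R}^{(n)})$, which is a ``small" self-map of an unstable leaf intertwined with the stable foliation. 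Standard Pesin absolute continuity arguments (along the lines of Pugh--Shub), adapted to the uniformity provided by the symbolic coding in $\HWT_\chi$, give the required estimate. Once this is in place, every factor in the integrand converges uniformly, and dominated convergence delivers $\psi_h(\underline{R}^{(n)})\to\psi_h(\underline{R}^*)$, proving continuity.
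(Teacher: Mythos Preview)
Your approach is essentially the paper's proof: the paper also uses the stable holonomy $\Gamma:V^u(\ul{S})\cap A_T\to V^u(\ul{R})\cap A_T$ as the change of variables (precisely because it carries $A_T$ to $A_T$), combines it with the map $I$ from Lemma~\ref{0.1} and the H\"older estimate of Corollary~\ref{g-Holder} to control the density $1/g^{\ul{R}}\circ\Gamma$, and concludes by bounding the three error terms $(h\circ\Gamma-h)$, $(1/g^{\ul{R}}\circ\Gamma-1/g^{\ul{S}})$, and $(\Jac(\Gamma)-1)$. The one simplification you are missing is that the ``hard part'' is not hard: the paper simply invokes Pesin's absolute continuity theorem \cite[Theorem~8.6.1]{BP}, which already gives $\|\Jac(\Gamma)-1\|\le K_T\theta^n$ for the holonomy along the stable lamination in $A_T$ between $C^1$-close unstable transversals; no hands-on argument via $\pi^n_s\circ I_n^{-1}$ is needed. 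Note also that the paper is careful to say that $\Jac(\Gamma)$ here is the Radon--Nikodym derivative of the pushed leaf volume, not a classical differential Jacobian (the stable holonomy is in general only H\"older, so your notation $\Jac(d_\cdot\pi^n_s)$ should be read in that sense).
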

\begin{proof}
Fix a partition member $T$, and let $A_T:=\bigcupdot\{V^s(\ul{S}):\bigcap_{i\geq0}f^{-i}[S_i]\neq\varnothing,S_0=T\}$. For every $\ul{R},\ul{S}\in \Sig_L$ s.t. $R_0=S_0=T$, let $\Gamma:V^u(\ul{S})\cap A_T\rightarrow V^u(\ul{R})\cap A_T$ be the holonomy map along the stable leaves. By Pesin's absolute continuity theorem \cite[Theorem~8.6.1]{BP}, $\|\Jac(\Gamma)-1\|\leq K_T\theta^n$ whenever $d(\ul{R},\ul{S})\leq e^{-n}$, where $\theta$ is as defined in Claim \ref{VRlikeVu}, and $K_T$ is a positive constant depending on the partition member $T$. Here $\Jac(\Gamma)$ refers to the Radon-Nikodym derivative of the mapping, and not the standard derivative (as it may not exist).

 \medskip
Let $h\in C(M)$, w.l.o.g. $\|h\|_\infty=1$. $M$ is compact, whence $\epsilon_h(\delta):=\sup\{|h(x)-h(y)|:d(x,y)<\delta\}\xrightarrow[\delta\rightarrow0^+]{}0$. 
Assume $d(\ul{R},\ul{S})=e^{-n}, n\geq 1$. If $x\in V^u(\ul{S})\cap A_T$, then $\exists\ul{Q}\in\Sig_R^\circ\cap[T]$ s.t. $x=\widehat{\pi}(\ul{S}\cdot \ul{Q}),\Gamma(x)=\widehat{\pi}(\ul{R}\cdot \ul{Q})$, where $\ul{S}\cdot \ul{Q},\ul{R}\cdot \ul{Q}$ are the concatenations of two one-sided chains which begin with the same symbol. In addition, $d(\ul{R}\cdot\ul{Q},\ul{S}\cdot\ul{Q})=d(\ul{R},\ul{S})=e^{-n}$. Then by Claim \ref{VRlikeVu}, $d(x,\Gamma(x))\leq3d_{C^0}(V^u(\ul{R}),V^u(\ul{S}))\leq 3K\theta^n$ where $\theta\in(0,1), K>0$ are constants and the factor of $3$ comes from the Lipschitz bound 
of $\ul{R}'\mapsto \text{unique element of }\Big( V^u(\ul{R}')\cap V^s(\ul{Q})\Big)$, $\ul{R}'\in [T]\cap \Sig_L$ (see \cite[Proposition~3.5(3)]{SBO}). Now recall the mapping $I:V^u(\ul{S})\rightarrow V^u(\ul{R})$ from Lemma \ref{0.1}.

\medskip
\textbf{Step 1:} $d(\Gamma(x), I(x))\leq d(\Gamma(x),x) +d(x,I(x))$, and by part 0 in the proof of Lemma \ref{0.1}, $d(x,I(x))\leq 2d_{C^0}(V^u(\ul{R}),V^u(\ul{S}))\leq 2K\theta^n$, whence in total $d(\Gamma(x), I(x))\leq5K\theta^n$.

\medskip
\textbf{Step 2:} Recall $m_{V^u(\ul{R})}=\frac{1}{g^{\ul{R}}}\cdot\overline{m}_{V^u(\ul{R})}$ where $\overline{m}_{V^u(\ul{R})}$ is the induced Riemannian leaf volume of $V^u(\ul{R})$. By part 4 in Lemma \ref{0.1}, $\|g^{\ul{R}}\circ I-g^{\ul{S}}\|\leq\epsilon^\frac{3}{2} \Vol (V^u(\ul{S})) \theta_2^n$ where $\theta_2\in(0,1)$ is a constant. Therefore
$$\|\frac{1}{g^{\ul{R}}\circ I}-\frac{1}{g^{\ul{S}}}\|\leq\|\frac{1}{g^{\ul{R}}}\|\cdot\|\frac{\Vol(V^u(\ul{S}))}{g^{\ul{S}}}\|\cdot\epsilon^\frac{3}{2}\theta_2^n.$$ By Theorem \ref{naturalmeasures},  $\|\frac{1}{g^{\ul{R}}}\|\leq \frac{1}{\Vol(V^u(\ul{R}))}e^{\pm\epsilon}$,$\|\frac{1}{g^{\ul{S}}}\|\leq \frac{1}{\Vol(V^u(\ul{S}))}e^{\pm\epsilon}$. So in total $\|\frac{1}{g^{\ul{R}}\circ I}-\frac{1}{g^{\ul{S}}}\|\leq \frac{1}{\Vol(V^u(\ul{R}))}e^{\pm2\epsilon}\epsilon^\frac{3}{2} \theta_2^n$. By part 5 of Lemma \ref{0.1}, $\frac{\Vol(V^u(\ul{R}))}{\Vol(V^u(\ul{S}))}=e^{\pm\epsilon}$ if $R_0=S_0$. Let $C_{tmp}(T):=\sup\{\frac{1}{\Vol(V^u(\ul{R}'))};R_0'=R_0=S_0=T\}\leq e^{\epsilon}\frac{1}{\Vol(V^u(\ul{R}))}<\infty$, then $\|\frac{1}{g^{\ul{R}}\circ I}-\frac{1}{g^{\ul{S}}}\|\leq C_{tmp}(T)e^{\pm2\epsilon} \epsilon^\frac{3}{2} \theta_2^n\leq C_{tmp}(T)\theta_2^n$ for $\epsilon>0$ small enough.

\medskip
\textbf{Step 3:} By Corollary \ref{g-Holder}, $\rho^{\ul{S}}=\frac{\Vol(V^u(\ul{S}))}{g^{\ul{S}}}$ is $(2\epsilon e^{3\epsilon},\frac{\beta}{4})-$H\"older continuous. Therefore $\frac{1}{g^{\ul{S}}}$ is

$(\frac{1}{\Vol(V^u(\ul{S}))}2\epsilon e^{3\epsilon},\frac{\beta}{4})-$H\"older continuous, whence $(C_{tmp}(T)2\epsilon e^{3\epsilon},\frac{\beta}{4})-$H\"older continuous. Combining this with steps 1 and 2 yields:

\begin{align*}
    \|\frac{1}{g^{\ul{R}}\circ \Gamma}-\frac{1}{g^{\ul{S}}}\|&\leq \|\frac{1}{g^{\ul{R}}\circ I}-\frac{1}{g^{\ul{S}}}\|+\|\frac{1}{g^{\ul{R}}\circ \Gamma}-\frac{1}{g^{\ul{R}}\circ I}\|\leq C_{tmp}(T)\theta_2^n+ C_{tmp}(T)2\epsilon e^{3\epsilon}\cdot d(\Gamma(x),I(x))^\frac{\beta}{4}\\
    &\leq C_{tmp}(T)\theta_2^n+ C_{tmp}(T)2\epsilon e^{3\epsilon}\cdot(5K\theta^n)^\frac{\beta}{4}\leq \tilde{C}_{tmp}(T)\theta_2^n,
\end{align*}
  where $\tilde{C}_{tmp}(T)$ is a global constant of, and $\theta^\frac{\beta}{4}\leq \theta_2<1$. In addition, $\|h-h\circ\Gamma\|\leq \epsilon_h(d(x,\Gamma(x)))\leq\epsilon_h(3K\theta^n)$.

  \medskip
  \textbf{Step 4:} $\Gamma$ is a bijection. So, 

\begin{align*}
    \int\limits_{V^u(\ul{R})\cap A_T}h\frac{1}{g^{\ul{R}}}d\overline{m}_{V^u(\ul{R})}=&\int\limits_{\Gamma[V^u(\ul{S})\cap A_T]}h\frac{1}{g^{\ul{R}}}d\overline{m}_{V^u(\ul{R})}=\int\limits_{V^u(\ul{S})\cap A_T}h\circ\Gamma\frac{1}{g^{\ul{R}}\circ\Gamma}\Jac(\Gamma)d\overline{m}_{V^u(\ul{S})}\\
    =&\int\limits_{V^u(\ul{S})\cap A_T}h\frac{1}{g^{\ul{S}}}d\overline{m}_{V^u(\ul{S})}+\int\limits_{V^u(\ul{S})\cap A_T}(h\circ\Gamma-h)\frac{1}{g^{\ul{R}}\circ\Gamma}\Jac(\Gamma)d\overline{m}_{V^u(\ul{S})}\\ +&\int\limits_{V^u(\ul{S})\cap A_T}h\cdot(\frac{1}{g^{\ul{R}}\circ\Gamma}-\frac{1}{g^{\ul{S}}})\Jac(\Gamma)d\overline{m}_{V^u(\ul{S})}+\int\limits_{V^u(\ul{S})\cap A_T}h\frac{1}{g^{\ul{S}}}(\Jac(\Gamma)-1)d\overline{m}_{V^u(\ul{S})}\\
     =&\int_{V^u(\ul{S})\cap A_T}h\frac{1}{g^{\ul{S}}}d\overline{m}_{V^u(\ul{S})}\pm(\epsilon_h(3K\theta^n)+\|\Jac(\Gamma)\|\Vol(V^u(\ul{S}))\tilde{C}_{tmp}(T)\theta_2^n+\|\Jac(\Gamma)-1\|),
 \end{align*}
 where the last transition used the bound we achieved in step 3. Thus,

  \begin{align*}
      |\mu_{\ul{R}}(h)-\mu_{\ul{S}}(h)|&=|\int_{V^u(\ul{R})\cap A_T}h\frac{1}{g^{\ul{R}}}d\overline{m}_{V^u(\ul{R})}-\int_{V^u(\ul{S})\cap A_T}h\frac{1}{g^{\ul{S}}}d\overline{m}_{V^u(\ul{S})}|\\
      &\leq\epsilon_h(3K\theta^n)+(1+K_{T})\Vol(V^u(\ul{S}))\tilde{C}_{tmp}(T)\theta_2^n+K_{T}\theta^n),
  \end{align*}
where $\Vol(V^u(\ul{S}))$ is bounded uniformly by $\epsilon$. Therefore, $\psi_h$ is uniformly-continuous on cylinders, and so it is continuous on $\Sig_L$.
\end{proof}

\begin{definition}\label{RuelleOpe}
We define the following Ruelle operator on $C(\Sig_L)$:
$$(L_\phi\tilde{\psi})(\ul{R}):=\sum_{\sigma_R\ul{S}=\ul{R}}e^{\phi(\ul{S})}\tilde{\psi}(\ul{S}).$$
\end{definition}
The sum has finitely many terms since $\Sig$ (and thus $\Sig_L$) is locally-compact (see the remark after Definition \ref{Doomsday}), and therefore $\#\{S:R_0\rightarrow S\}$ is finite.

\medskip

The following property is the cornerstone of our approach. It allows us to translate the search for the  measure $p$ which integrates the conditional leaf measures into an $f$-invariant measure, into a fixed point problem for Ruelle's operator on a countable Markov shift,
(see \textsection \ref{lalipop}).
\begin{prop}\label{PropMuR} $\forall h\in C(M),\ul{R}\in \Sig_L$, $\mu_{\ul{R}}(h\circ f)=\sum_{\sigma_R\ul{S}=\ul{R}}\mu_{\ul{S}}(h)e^{\phi(\ul{S})}$.
\end{prop}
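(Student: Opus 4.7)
The plan is to establish the identity first on the dense subset $\Sig^\circ_L \subseteq \Sig_L$, where $\mu_{\ul{R}}$ coincides with $m_{\ul{R}}$, and then propagate it to all of $\Sig_L$ by continuity of both sides in $\ul{R}$.

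For $\ul{R}\in\Sig^\circ_L$, Claim \ref{bisligrill} gives $\mu_{\ul{R}} = m_{\ul{R}}$, and I would apply Claim \ref{phibar} to write
$$\mu_{\ul{R}}(h\circ f) = \int h\circ f\, dm_{\ul{R}} = \int h\, d(m_{\ul{R}}\circ f^{-1}) = \sum_{\sigma_R\ul{S}=\ul{R}} e^{\phi(\ul{S})}\, m_{\ul{S}}(h).$$
To convert each $m_{\ul{S}}$ back into $\mu_{\ul{S}}$, I need each admissible one-step extension $\ul{S}$ of $\ul{R}$ (so $S_{i-1}=R_i$ for $i\leq 0$, and $S_0$ any symbol with $R_0\to S_0$) to lie in $\Sig^\circ_L$. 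To see this, pick $x\in W^u(\ul{R})$ and $y\in R_0\cap f^{-1}[S_0]$; the Smale bracket $z:=[x,y]_{R_0}$ furnished by the Markov property (Definition \ref{Doomsday}(2)(c)) inherits the past of $x$ and the future of $y$, so $z\in W^u(\ul{R})$ and $R(f(z))=S_0$. Unwinding $W^u(\ul{S})=S_0\cap f[W^u(\ul{R})]$ (using $S_{-j}=R_{-(j-1)}$ for $j\geq 1$) yields $f(z)\in W^u(\ul{S})$. Since $f(z)\in S_0\subset\bigcupdot\mathcal{R}=\pi[\Sigma^\#]$ by Proposition \ref{cafeashter}, the canonical coding $\ul{R}(f(z))\in\Sig^\#$ has past exactly $\ul{S}$, forcing $\ul{S}\in\Sig_L^\#$; together with $W^u(\ul{S})\neq\varnothing$ this gives $\ul{S}\in\Sig^\circ_L$, so Claim \ref{bisligrill} supplies $m_{\ul{S}}=\mu_{\ul{S}}$ and the identity holds on $\Sig^\circ_L$.

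For the continuity argument, the left-hand side $\psi_{h\circ f}(\ul{R})$ is continuous on $\Sig_L$ by Lemma \ref{0.3} applied to $h\circ f\in C(M)$. For the right-hand side, if $\ul{R}^n\to\ul{R}$ in $\Sig_L$ then eventually $R^n_0=R_0$, and by the local finiteness of the graph $\mathcal{G}$ the index set $\{S_0:R_0\to S_0\}$ is finite; for each fixed such $S_0$ the corresponding extensions satisfy $\ul{S}^n\to \ul{S}$, so $\mu_{\ul{S}^n}(h)\to\mu_{\ul{S}}(h)$ by Lemma \ref{0.3} and $e^{\phi(\ul{S}^n)}\to e^{\phi(\ul{S})}$ by the continuity of $\phi$ (Claim \ref{VolHol}). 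A finite sum of continuous terms is continuous, hence so is the right-hand side.

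Since $\Sig^\circ_L$ is dense in $\Sig_L$ (remark after Definition \ref{canonicparts}), both sides agree on a dense set and are continuous, so the identity extends to all of $\Sig_L$. The main obstacle is the structural lemma that every admissible forward extension of $\ul{R}\in\Sig^\circ_L$ remains in $\Sig^\circ_L$; this is precisely where the Markov property of the partition via Smale brackets and the canonical characterization $\HWT_\chi=\widehat{\pi}[\Sig^\#]$ both enter in an essential way, for otherwise the right-hand side produced by Claim \ref{phibar} would contain spurious $m_{\ul{S}}$-terms that cannot be matched with $\mu_{\ul{S}}(h)$.
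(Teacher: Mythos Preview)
Your proof is correct and follows essentially the same route as the paper: verify the identity on $\Sig^\circ_L$ using Claim \ref{phibar} and Claim \ref{bisligrill}, then extend by density and continuity (Lemma \ref{0.3} and continuity of $L_\phi$). Your explicit Smale-bracket argument showing that every admissible one-step extension $\ul{S}$ of $\ul{R}\in\Sig^\circ_L$ again lies in $\Sig^\circ_L$ is exactly the content of the remark after Definition \ref{canonicparts} (stating that $\Sig^\circ_L$ is invariant), which the paper invokes without spelling out; your version makes this step more transparent.
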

\begin{proof}
 Fix any $h\in C(M)$. By Lemma \ref{0.3},
 $\psi_h(\ul{R}):=\mu_{\ul{R}}(h)$ and $\psi_{h\circ f}(\ul{R}):=\mu_{\ul{R}}(h\circ f)$ are
 continuous on $\Sig_L$.
 By Claim \ref{phibar} and Claim \ref{bisligrill}, $\forall \ul{R}\in \Sig_L^\circ$, $\psi_{h\circ f}(\ul{R})=L_\phi \psi_h(\ul{R})$. By the remark after Definition \ref{canonicparts}, $\Sig_L^\circ$ is dense in $\Sig_L$. Therefore, by the continuity of $L_\phi$ ($\phi$ is continuous and $\Sig$ is locally-finite), $\psi_{h\circ f}(\ul{R})=L_\phi \psi_h(\ul{R})$ for $\ul{R}\in \Sig_L\setminus \Sig_L^\circ$ as well.
\end{proof}
\section{The Leaf Condition}\label{leafocondo}
In the previous sections we constructed a family of leaf measures $\{\mu_{\underline{R}}\}_{\underline{R}\in\Sig_L}$ with a simple transformation law for the action of $f$ (Proposition \ref{PropMuR}). Our task now is to construct a measure $p$ on $\Sig_L$ so that $\int_{\Sig_L}\mu_{\underline{R}}dp$ is $f$-invariant. We will see later, that because of the transformation rule in Proposition \ref{PropMuR}, it is sufficient to know that $p$ is {\em  $\phi$-conformal:} $L_\phi^\ast p=p$. 

To show that $p$ exists, we will use the generalized Ruelle's Perron-Frobenius Theorem for countable Markov shifts  \cite{SarigNR}. This theorem is stated for irreducible (i.e. topologically transitive) countable Markov shifts. $\Sig_L$ is not necessarily irreducible. In this section we find an irreducible component of $\Sig_L$ where $\mu_{\underline{R}}\neq 0$. 
To do this we will use the following leaf condition:


\begin{definition}\label{leafcondo}
We say that {\em the leaf condition} is satisfied, if there exists an unstable leaf of maximal dimension which gives $\HWT_\chi$ a positive leaf volume. We say that the leaf condition is satisfied for a measurable set $A\in\mathcal{B}$, if there exists an unstable leaf of maximal dimension which gives $A$ a positive leaf volume.
\end{definition}
\noindent
(Note that this is weaker than \eqref{Leaf-Condition}.)

A similar ``leaf condition" was introduced earlier by Climenhaga, Dolgopyat, and Pesin in \cite{PesinVaughnDima}.
\begin{definition}
$$\Sig^{\osharp}:=\bigcupdot_{[S]\subseteq \Sig}\left([S]\cap \Sig_L^\#\right)\times\left([S]\cap \Sig_R^\circ\right).$$
\end{definition}
Notice, $\Sig^\circ\subseteq \Sig^{\osharp}\subseteq \Sig^\#$, and the invariance of $\Sig_R^\circ,\Sig_L^\#$ implies the invariance of $\Sig^{\osharp}$.

\noindent Recall the measures $\{\mu_{\ul{R}}\}_{\ul{R}\in\Sig_L}$ from Definition \ref{muR}.

\begin{lemma}\label{canonicassumption}
If there exists an unstable leaf of maximal dimension, $V^u$, whose Riemannian volume gives $\HWT_\chi$ a positive measure, then there exist a maximal irreducible component $\langle S\rangle^\mathbb{Z}\cap \Sig$ and a periodic chain $\ul{S}\in \Sig_L\cap\langle S\rangle^{-\mathbb{N}}$ s.t. $\mu_{\ul{S}}(\widehat{\pi}[\Sig^{\osharp}\cap\langle S\rangle^\mathbb{Z}])>0$.
\end{lemma}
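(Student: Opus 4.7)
The plan is to convert the leaf hypothesis into positivity of a natural measure on a one-sided chain, then use a double pigeonhole together with a stable-holonomy transport to land on a strictly periodic chain inside a suitable maximal irreducible component. Using $\HWT_\chi=\bigcupdot\mathcal{R}$ (Proposition \ref{cafeashter}) and countable additivity, I would first find $R\in\mathcal{R}$ with $\Vol_{V^u}(V^u\cap R)>0$; then iteratively peel off past coordinates by choosing at each stage $n$ a partition member $R_{-n}$ for which $V^u\cap R\cap f[R_{-1}]\cap\cdots\cap f^n[R_{-n}]$ retains positive leaf volume. This yields $\ul{R}^{(0)}\in\Sig_L^\circ$ with $\Vol_{V^u}(V^u\cap W^u(\ul{R}^{(0)}))>0$, and since both $V^u$ and $V^u(\ul{R}^{(0)})$ are unstable leaves of maximal dimension they agree locally at every common point, so Theorem \ref{naturalmeasures} upgrades this to $m_{\ul{R}^{(0)}}(W^u(\ul{R}^{(0)}))>0$. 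As $\ul{R}^{(0)}\in\Sig_L^\#$, a past-recurrent symbol exists, and by shifting and re-applying Theorem \ref{naturalmeasures} I may assume $R^{(0)}_0$ and every past entry of $\ul{R}^{(0)}$ lie in $\langle R^{(0)}_0\rangle$.

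The irreducible component is then identified via the future. For each $x\in W^u(\ul{R}^{(0)})$, $\ul{R}(x)\in\Sig^\#$ forces its future $\ul{Q}(x)$ to have some recurrent partition member, so countable additivity gives a fixed $T\in\mathcal{R}$ and a positive-$m_{V^u(\ul{R}^{(0)})}$-measure subset $F\subseteq W^u(\ul{R}^{(0)})$ on which $T$ recurs in $\ul{Q}(x)$ infinitely often. Local finiteness of the partition graph leaves only finitely many loops $T\to T_1\to\cdots\to T_{L-1}\to T$ of each length $L$, so a finite pigeonhole on $L$, on these loops, and on a first-hitting time $n^*$ yields a positive-measure subset $F^*\subseteq F$ realizing one specific loop at positions $n^*,\ldots,n^*+L$. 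I then define $\ul{S}\in\Sig_L\cap\langle T\rangle^{-\mathbb{N}}$ to be the strictly periodic chain of period $L$ obtained by repeating this loop backwards into the past, and consider the orbit-derived chain $\ul{S}'$ obtained by reading the non-positive coordinates of $\ul{R}(f^{n^*+L}(x))$. By Corollary \ref{decomposition} iterated together with another finite pigeonhole on the $n^*$ intermediate past coordinates of $\ul{S}'$, I can fix a single such $\ul{S}'$ with $m_{V^u(\ul{S}')}(f^{n^*+L}[F^*]\cap W^u(\ul{S}'))>0$. By construction $\ul{S}$ and $\ul{S}'$ agree on positions $0,-1,\ldots,-L$, so Claim \ref{VRlikeVu} makes their leaves exponentially $C^1$-close and Pesin's absolute continuity theorem (used already in the proof of Lemma \ref{0.3}) delivers a stable-holonomy map $\tilde\Gamma:V^u(\ul{S}')\cap A_T\to V^u(\ul{S})\cap A_T$ with bounded Jacobian, whose image of our positive-measure set has positive $m_{V^u(\ul{S})}$-measure.

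It remains to identify this image with $\widehat\pi[\Sig^{\osharp}\cap\langle T\rangle^\mathbb{Z}]$. For $z'=\tilde\Gamma(z)$ in the image the canonical coding is $\ul{R}(z')=\ul{S}\cdot\ul{Q}$, where $\ul{Q}$ is the future of the $F^*$-preimage of $z$ shifted by $n^*+L$; since $T=Q_0$ and $T$ recurs in $\ul{Q}$, every $Q_i$ sits between two occurrences of $T$, hence $Q_i\in\langle T\rangle$, and combined with $\ul{S}\in\langle T\rangle^{-\mathbb{N}}$ this gives $\ul{R}(z')\in\langle T\rangle^\mathbb{Z}$. Moreover $\ul{S}\in\Sig_L^\#$ (periodic) and $\ul{Q}\in\Sig_R^\circ$ (because $z'\in\HWT_\chi$), so $\ul{R}(z')\in\Sig^{\osharp}\cap\langle T\rangle^\mathbb{Z}$. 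Claim \ref{bisligrill} then converts the positive $m_{V^u(\ul{S})}$-measure on $W^u(\ul{S})=V^u(\ul{S})\cap A_T$ into the desired positive $\mu_{\ul{S}}$-measure. The main technical obstacle will be the two-layered pigeonhole: one must fix the loop, the $n^*$ intermediate coordinates, and the hitting time simultaneously while retaining positive measure, and ensure the resulting periodic $\ul{S}$ is close enough to $\ul{S}'$ for the quantitative Pesin holonomy bounds to preserve positivity.
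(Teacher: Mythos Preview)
Your overall strategy---find a leaf with positive mass on a partition element, extract a forward-recurrent symbol, concatenate to reach that symbol, then holonomy onto a periodic chain---mirrors the paper's. There is, however, a genuine gap in your Step~2, and an unnecessary complication later.

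\textbf{The gap.} The greedy choice ``at each stage $n$ pick $R_{-n}$ so that $V^u\cap R\cap f[R_{-1}]\cap\cdots\cap f^n[R_{-n}]$ retains positive leaf volume'' only guarantees that every \emph{finite} truncation has positive measure; it does \emph{not} force the infinite intersection $V^u\cap W^u(\ul{R}^{(0)})=\bigcap_n\bigl(V^u\cap\bigcap_{j\le n}f^j[R_{-j}]\bigr)$ to have positive measure. A nested sequence of positive-measure sets can collapse to a null set (think of a Cantor-type construction). The conclusion you want is salvageable, but not by greed: take a Lebesgue density point $y$ of $V^u\cap R$ and set $\ul{R}^{(0)}:=(\ul{R}(y))_{i\le 0}$. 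Since $V^u$ and $V^u(\ul{R}^{(0)})$ are both maximal-dimension unstable leaves through $y$, they agree on a neighbourhood $U_y$ of $y$; then $V^u\cap W^u(\ul{R}^{(0)})\supseteq U_y\cap A_{R}\supseteq U_y\cap R$, which has positive $\Vol_{V^u}$-measure by density. The paper takes exactly this shortcut: it never asserts positivity on $W^u(\ul{R})$ at this stage, only $m_{V^u(\ul{R})}(R)>0$, which follows immediately from local leaf coincidence.

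\textbf{The unnecessary complication.} You work hard to arrange that the periodic chain $\ul{S}$ and the orbit-derived chain $\ul{S}'$ agree on positions $0,\dots,-L$, so that Claim~\ref{VRlikeVu} makes the leaves $C^1$-close and the holonomy Jacobian is close to $1$. This is not needed: Pesin's absolute continuity theorem (as invoked in Lemma~\ref{0.3}) gives a holonomy with \emph{bounded} Jacobian between \emph{any} two admissible unstable leaves in the same cylinder $[T]$, with no closeness hypothesis. The paper simply takes an arbitrary periodic $\ul{S}\in[S]$, concatenates $\ul{R}$ with one fixed cylinder $[R,S_1,\dots,S_{l-2},S]$ to reach $\ul{R}'\in[S]$, and applies holonomy $\Gamma:V^u(\ul{R}')\cap\{x:S\text{ recurs}\}\to V^u(\ul{S})$ directly. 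Your double pigeonhole on loop length, specific loop, hitting time, and intermediate coordinates is therefore avoidable---the paper's single countable pigeonhole on cylinders starting at $R$ suffices.
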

\begin{proof}
$V^u$ gives a positive volume to $\bigcupdot\mathcal{R}$, which is a countable union. Therefore, $\exists R\in\mathcal{R}$ s.t. $V^u\cap R$ has a positive leaf volume in $V^u$. 
Since $\forall y\in R\cap V^u$, $\dim(V^u)=\dim V^u(\ul{R}(y))$, 
$\exists \ul{R}\in\Sig^\circ\cap[R]$ s.t. $m_{V^u(\ul{R})}(R)>0$. For every $ x\in R$, $\exists S\in\mathcal{R}$ s.t. $R(f^i(x))=S$ for infinitely often $i\geq0$. Since there is only a countable number of finite cylinders, $\exists l\geq2$ and a cylinder $[R,S_1,...,S_{l-2},S]$ s.t. $\{x\in R:\ul{R}(x)\in [R,S_1,...,S_{l-2},S], \#\{i:R(f^i(x))=S\}=\infty\}$ has a positive leaf volume in $V^u(\ul{R})$. Let $\ul{R}'$ be the admissible concatenation $\ul{R}\cdot (R,S_1,...,S_{l-2},S)\in \Sig_L$. Then since $f^{-l}[V^u(\ul{R}')]\supseteq \{x\in V^u(\ul{R})\cap R0:\ul{R}(x)\in [R,S_1,...,S_{l-2},S] \}$, $V^u(\ul{R}')$ gives a positive leaf volume to $\{x\in S: \#\{i\geq0:R(f^i(x))=S\}=\infty\}$. Since $S$ is a recurring symbol in the future of any of these points, $\exists \ul{S}\in [S]\subseteq \Sig_L$ which is periodic.

The holonomy map along stable leaves $\Gamma:V^u(\ul{R}')\cap \{x\in S: \#\{i\geq0:R(f^i(x))=S\}=\infty\}\rightarrow V^u(\ul{S})$ is defined by $\Gamma(x)=\widehat{\pi}((S_i)_{i\leq0}\cdot(R(f^i(x))_{i\geq0}))\in\widehat{\pi}[\Sig^{\osharp}\cap\langle S\rangle^\mathbb{Z}]$, where $\cdot$ denotes concatenation. By Pesin's absolute continuity theorem \cite[Theorem~8.6.1]{BP}, $m_{V^u(\ul{S})}(\Gamma[\{x\in S\cap V^u(\ul{R}'): \#\{i\geq0:R(f^i(x))=S\}=\infty\}])>0$. 
Thus
\begin{equation}\label{forFootnote8}
\mu_{\ul{S}}(\widehat{\pi}[\Sig^{\osharp}\cap\langle S\rangle^\mathbb{Z}])\geq \mu_{\ul{S}}(\bigcupdot\{V^s(\ul{S}^+):\ul{S}^+\in\Sig_R^\circ\cap[S],\#\{i\geq0: S_i^+=S\}=\infty)\})>0.
\end{equation}
\end{proof}
\noindent\textbf{Remark:}
\begin{enumerate}
 	\item Let $\ul{S}^\pm$ be the periodic extension of $\ul{S}$ 	to $\Sig$, then $p':=\widehat{\pi}(\ul{S}^\pm)$ is a 	periodic point. It follows that $H_\chi(p')\supseteq 	\widehat{\pi}[\langle S\rangle^\mathbb{Z}\cap \Sig^{\osharp}]	$, whence $\mu_{\ul{S}}	(H_\chi(p'))>0$ (see Definition \ref{homoclinicclass}).
	\item From this point on, we focus on one ergodic homoclinic 	class, and constructing an SRB measure on it. 
	By Lemma \ref{canonicassumption}, we may restrict our attention to a maximal irreducible component
. We therefore assume w.l.o.g. that $\Sig_L,\Sig$ are irreducible.
\end{enumerate}

\begin{prop}\label{psitilda}
$\psi:\Sig \rightarrow \mathbb{R}^+\cup\{0\}$, $\psi(\ul{R}):=\mu_{\ul{R}}(1)$ is a non-negative continuous eigenfunction of $L_\phi$ with eigenvalue $1$. If there exists a chain $\ul{R}\in \Sig_L$ s.t. $\mu_{\ul{R}}(1)>0$, then $\psi$ is also positive everywhere.
\end{prop}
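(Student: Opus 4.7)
The plan is to verify the three assertions in order, handling non-negativity and continuity quickly before addressing the eigenfunction equation and then the positivity statement. Non-negativity is immediate since $\mu_{\ul{R}}$ is a positive Borel measure, so $\psi(\ul{R}) = \mu_{\ul{R}}(1) \geq 0$. Continuity of $\psi$ on $\Sig_L$ will follow directly from Lemma \ref{0.3} applied to the constant function $h \equiv 1 \in C(M)$.

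For the eigenfunction identity, I would invoke Proposition \ref{PropMuR} with $h \equiv 1$. Since $1 \circ f = 1$, that proposition gives
\[
\psi(\ul{R}) \;=\; \mu_{\ul{R}}(1) \;=\; \mu_{\ul{R}}(1\circ f) \;=\; \sum_{\sigma_R \ul{S} = \ul{R}} e^{\phi(\ul{S})}\mu_{\ul{S}}(1) \;=\; (L_\phi \psi)(\ul{R}),
\]
so $\psi$ is a fixed point of $L_\phi$. Iterating yields $\psi = L_\phi^n \psi$ for every $n \geq 0$, i.e.\ $\psi(\ul{R}) = \sum_{\sigma_R^n \ul{S} = \ul{R}} e^{\phi_n(\ul{S})} \psi(\ul{S})$ with $\phi_n := \sum_{k=0}^{n-1} \phi \circ \sigma_R^k$. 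This is the identity I will use to propagate positivity.

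The main point is upgrading a single positive value $\psi(\ul{R}_0) > 0$ to positivity everywhere on $\Sig_L$, by combining continuity with irreducibility (which we may assume by the remark following Lemma \ref{canonicassumption}). First, continuity of $\psi$ at $\ul{R}_0$ yields an $m \in \mathbb{N}$ such that $\psi > 0$ on the cylinder $[R_{0,-m},\ldots,R_{0,0}] \cap \Sig_L$. Given an arbitrary $\ul{R} \in \Sig_L$, irreducibility of the underlying graph provides an admissible path $R_0 = A_0 \to A_1 \to \cdots \to A_k = R_{0,-m}$. Setting $n := k + m$, I will construct a forward extension $\ul{S}$ of $\ul{R}$, that is, a chain with $\sigma_R^n \ul{S} = \ul{R}$, by taking $S_j := R_{j+n}$ for $j \leq -n$, $S_{-n+i} := A_i$ for $0 \leq i \leq k$, and $S_{-m+i} := R_{0,-m+i}$ for $0 \leq i \leq m$. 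Admissibility at the two joining points $j = -n$ and $j = -m$ is immediate from the admissibility of $\ul{R}$, of the chosen path, and of $\ul{R}_0$. By construction the last $m+1$ symbols of $\ul{S}$ coincide with those of $\ul{R}_0$, so $\ul{S}$ lies in the cylinder above and $\psi(\ul{S}) > 0$; then the iterated eigenfunction relation gives
\[
\psi(\ul{R}) \;=\; \sum_{\sigma_R^n \ul{T} = \ul{R}} e^{\phi_n(\ul{T})} \psi(\ul{T}) \;\geq\; e^{\phi_n(\ul{S})}\psi(\ul{S}) \;>\; 0.
\]

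The only delicate step is this last one: a cylinder-level positivity statement (from continuity) and a graph-theoretic connection (from irreducibility) must be spliced together to build an explicit admissible $\ul{S}$ that is simultaneously a forward extension of $\ul{R}$ and a member of the positivity cylinder around $\ul{R}_0$. All other parts of the proof are formal consequences of Lemma \ref{0.3} and Proposition \ref{PropMuR}.
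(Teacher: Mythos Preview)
Your proof is correct and follows essentially the same line as the paper's: continuity comes from Lemma~\ref{0.3} with $h\equiv 1$, the eigenfunction identity from Proposition~\ref{PropMuR} with $h\equiv 1$, and positivity is propagated by combining a cylinder of positivity (from continuity) with irreducibility and the relation $L_\phi\psi=\psi$. The only difference is that you spell out the construction of the extension $\ul{S}$ explicitly, whereas the paper phrases the same step more tersely as ``by irreducibility there is $\ul{S}'$ close to $\ul{R}$ with $\sigma_R^{m}\ul{S}'=\ul{S}$.''
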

\begin{proof}

 By Lemma \ref{0.3}, $\psi=\psi_1$ is continuous, and by Proposition \ref{PropMuR} $L_\phi\psi=\psi$. 
 If there exists a chain $\ul{R}\in \Sig_L$ s.t. $\mu_{\ul{R}}(1)>0$, then 
 by continuity, $\exists n_{\ul{R}}$ s.t. $d(\ul{R},\ul{S})\leq e^{-n_{\ul{R}}}\Rightarrow \psi(\ul{S})>0$. This in turn means that $\forall m\geq0$, $\psi(\sigma_R^m\ul{S})>0$, since $L_\phi\psi=\psi$. By irreducibility, $\forall \ul{S}\in \Sig_L$ $\exists \ul{S}'\in\Sig_L$ and $\exists m_{\ul{S}}\geq0$ s.t. $d(\ul{S}',\ul{R})\leq e^{-n_{\ul{R}}}$ and $\sigma_R^{m_{\ul{S}}}\ul{S}'=\ul{S}$. Therefore $\psi(\ul{S})>0$,
and it follows that $\psi$ is positive everywhere
.

\end{proof}

\section{Recurrence and the Gurevich Pressure}\label{lalipop}\text{ }

We continue with the construction of the $\phi$-conformal measure $p$. As explained in the previous section, the plan is to use the generalized Ruelle's Perron-Frobenius Theorem for countable Markov shifts from \cite{SarigNR}. This theorem says that a conservative $\phi$-conformal measure $p$ exists if and only if two conditions hold: $\phi$ is recurrent, and $\phi$ has zero Gurevich pressure (see below). In this section we check these conditions, using the leaf condition. 

%

We begin by recalling the definition of the Gurevich pressure. For a potential $\zeta:\Sig_L\rightarrow\mathbb{R}$ with summable variations (see Claim \ref{VolHol}), the {\em partition functions} are $Z_n(\zeta,R):=\sum\limits_{\ul{S}\in \Sig_L\cap[R],\sigma_R^n\ul{S}=\ul{S}}e^{\sum_{k=0}^{n-1}\zeta(\sigma_R^k\ul{S})}$ (the sum has finitely many terms, since $\Sig_L$ is locally compact). The {\em Gurevich pressure} of the potential $\zeta$ is $P_G(\zeta):=\limsup\limits_{n\rightarrow\infty}\frac{1}{n}\log Z_n(\zeta,R)\in(-\infty,\infty]$. When $\Sig_L$ is transitive, the limit is independent of the choice of the symbol $R$ (see \cite[Proposition~3.2]{SarigTDF}). We say that the potential $\zeta$ is {\em recurrent} if $\sum_{n\geq1}e^{-nP_G(\zeta)}Z_n(\zeta,R)=\infty$ for some symbol (in this case the sum diverges for all $R$, see \cite[Corollary~3.1]{SarigTDF}). We say that the potential $\zeta$ is {\em positive recurrent} if it is recurrent, and $\sum_{n\geq1}n\cdot e^{-nP_G(\zeta)} \sum\limits_{\overset{\ul{S}\in \Sig_L\cap[R]\text{, s.t. }\sigma_R^n\ul{S}=\ul{S},}{\text{ and }\forall 0< l<n, S_{-l}\neq R}}e^{\sum_{k=0}^{n-1}\zeta(\sigma_R^k\ul{S})}<\infty$ for some (any) symbol $R$. Again, this property turns out to be independent of $R$. For more details, see \cite[\textsection~3.1.3]{SarigTDF}. 
For a detailed review of the properties of recurrent potentials, see \cite{SarigTDF}.

\noindent Recall the (one-sided) potential $\phi$ from Theorem \ref{naturalmeasures}.

\begin{definition}\label{TGP} The {\em geometric potential} is the (two-sided) potential
$$\varphi:\Sig\rightarrow [-d\cdot\log M_f,d\cdot\log M_f],\varphi(\ul{R}):=\log\Jac(d_{\widehat{\pi}(\ul{R})}f^{-1}|_{T_{\widehat{\pi}(\ul{R})}V^u(\ul{R})}).$$
\end{definition}

\subsection{Upper Bound for the Gurevich Pressure}

\begin{theorem}\label{ruellemargulis}
$P_G(\phi)\leq0$, where $\phi$ is defined in Theorem \ref{naturalmeasures}.
\end{theorem}
\begin{proof}
Let $\phi_n:=\sum_{k=0}^{n-1}\phi\circ\sigma_R^k$, $n\geq1$. Notice:
 \begin{align*}
     e^{\phi(\ul{R})}&\cdot m_{V^u(\ul{R})}=m_{V^u(\sigma_R\ul{R})}\circ f^{-1}|_{V^u(\ul{R})},\\
     \Rightarrow e^{\phi_n(\ul{R})}&\cdot m_{V^u(\ul{R})}=m_{V^u(\sigma_R^n\ul{R})}\circ f^{-n}|_{V^u(\ul{R})},\\
     \Rightarrow e^{\phi_n(\ul{R})}&=m_{V^u(\ul{R})}(f^{-n}[V^u(\ul{R})])\text{,  if }\sigma_R^n\ul{R}=\ul{R}.
 \end{align*}

Let $P_{n,L}:=$ the $n$-periodic points in $\Sig_L$, and let $P_{n}:=$ the $n$-periodic points in $\Sig$. Then there is a natural bijection $E:P_{n,L}\leftrightarrow P_n$. Define $\Delta_n(\ul{R}):=\phi_n(\ul{R})-\varphi_n(E(\ul{R}))$, where $\varphi_n(\ul{S}):=\sum_{k=0}^{n-1}\varphi(\sigma^{-k}\ul{S})$.

 \medskip
 \underline{Part 1:} $\Delta_n\rvert_{P_{n,L}}(\cdot)$ is bounded uniformly in $n$.

 \medskip
 \underline{Proof:} Fix some $\ul{R}\in P_{n,L}$, and write $x:=\widehat{\pi}(E(\ul{R}))$. Denote by $\lambda_{V^u(\ul{R})}$ and $\lambda_{f^{-n}[V^u(\ul{R})]}$ the (non-normalized) Riemannian leaf volume on $V^u(\ul{R})$ and $f^{-n}[V^u(\ul{R})] $, then
 \begin{align}\label{forAppend}
     e^{\Delta_n(\ul{R})}=&m_{V^u(\ul{R})}(f^{-n}[V^u(\ul{R})])\cdot\frac{1}{\Jac(d_xf^{-n}|_{T_xV^u(\ul{R})})}\nonumber\\
     =&\int_{f^{-n}[V^u(\ul{R})]}\frac{1}{g^{\ul{R}}(t)}d\lambda _{f^{-n}[V^u(\ul{R})]}(t)\cdot\frac{1}{\Jac(d_xf^{-n}|_{T_xV^u(\ul{R})})}\text{ }(g^{\ul{R}}\text{ is the limit function defined in equation }\eqref{JakeAmir})\nonumber\\
     =&\int_{V^u(\ul{R})}\frac{1}{g^{\ul{R}}(f^{-n}(t))}\cdot\Jac(d_tf^{-n}|_{T_tV^u(\ul{R})})d\lambda _{V^u(\ul{R})}(t)\cdot\frac{1}{\Jac(d_xf^{-n}|_{T_xV^u(\ul{R})})}\nonumber\\
     =&\int_{V^u(\ul{R})}\frac{1}{g^{\ul{R}}(f^{-n}(t))}\cdot\frac{\Jac(d_tf^{-n}|_{T_tV^u(\ul{R})})}{\Jac(d_xf^{-n}|_{T_xV^u(\ul{R})})}d\lambda _{V^u(\ul{R})}(t).
 \end{align}
We saw in the proof of Theorem \ref{naturalmeasures} that $g^{\ul{R}}=\Vol(V^u(\ul{R}))e^{\pm\epsilon}$. Equation \eqref{forholderphi} (with $n=0$) implies that $\frac{\Jac(d_tf^{-n}|_{T_tV^u(\ul{R})})}{\Jac(d_xf^{-n}|_{T_xV^u(\ul{R})})}=e^{\pm\epsilon}$. So $e^{\Delta_n(\ul{R})}=e^{\pm2\epsilon}$, and therefore $|\Delta_n|\leq2\epsilon$ for all $n$.

 \medskip
 \underline{Part 2:} By \cite[Proposition~6.1]{SBO}, and the $C^{1+\beta}$ regularity of $f$, $\varphi$ is H\"older continuous. Therefore, by Sinai's theorem (in its version for countable Markov shifts, as in \cite{Daon}), there exists a potential $\varphi^-:\Sig\rightarrow\mathbb{R}$, which is bounded on cylinders, and such that $(R_i=S_i, \forall i\leq0)\Rightarrow \varphi^-(\ul{R})=\varphi^-(\ul{S})$; and there exists a bounded and H\"older continuous function $A:\Sig\rightarrow \mathbb{R}$ such that $$\varphi=\varphi^-+A\circ \sigma^{-1}-A.$$
 In this case we say that $\varphi-\varphi^-$ is a coboundary. In particular, this formula implies that $\varphi^{-}$ is H\"older continuous. It follows that,
 $$\|\varphi_n-\varphi^-_n\|_{\infty}\leq 2\|A\|_\infty.$$ $\varphi^-$ can be naturally identified with a potential on $\Sig_L$, s.t. $\|\phi_n-\varphi^-_n\|_{P_{n,L},\infty}\leq 2(\|A\|_\infty+\epsilon)<\infty$, $\forall n\geq1$.

 \medskip
 \underline{Part 3:} Fix some symbol $R$. Notice that since $A$ and $\varphi$ are bounded, $\varphi^-$ is bounded. Bounded H\"older continuous potentials satisfy the variational principle: $P_G(\varphi^-)=\sup\{h_{\nu}(\sigma_R)+\int \varphi^-d\nu:\nu\text{ is an inv. prob. on }\Sig_L\}$ (see \cite[Theorem~4.4]{SarigTDF}). So:
 \begin{align}\label{admitarrete}
     P_G(\phi)=&\limsup\frac{1}{n}\log Z_n(\phi,R)=\limsup\frac{1}{n}\log Z_n(\varphi^-,R)\text{ }(\because\text{by parts 1 and 2})\nonumber\\
     =&\sup\{h_\nu(\sigma_R)+\int\varphi^-d\nu:\nu\text{ is an inv. prob. on }\Sig_L\}\text{ }(\because \text{variational principle})\nonumber\\
     \stackrel{(1)}{=}&\sup\{h_{\nu'}(\sigma^{-1})+\int\varphi^-d\nu':\nu'\text{ is an inv. prob. on }\Sig\}\nonumber\\
     =&\sup\{h_{\nu'}(\sigma^{-1})+\int\varphi d\nu':\nu'\text{ is an inv. prob. on }\Sig\} \stackrel{(2)}{\leq}0\text{ }(\because\varphi-\varphi^-\text{ is a coboundary}).\nonumber
 \end{align}
\noindent$(1)$ is by the entropy preserving natural bijection between shift invariant measures on $\Sig$ and on $\Sig_L$.
$(2)$ is because $\widehat{\pi}$ is finite-to-1 on $\Sig^\#$, a set of full measure w.r.t. any invariant probability measure, whence $\nu'\mapsto\nu'\circ \widehat{\pi}^{-1}$ preserves entropy. Each such measure is an invariant probability Borel measure for $f:M\rightarrow M$, 
so $\int\varphi d\nu'=\int \log\Jac(d_x f^{-1}|_{H^u(x)})d(\nu'\circ\widehat{\pi}^{-1})(x)\leq -h_{\nu'\circ\widehat{\pi}^{-1}}(f)= -h_{\nu'}(\sigma^{-1}) $ by the Ruelle-Margulis inequality and since $h_{\nu'}(\sigma)= h_{\nu'}(\sigma^{-1})$.
\end{proof}


\subsection{Recurrence and the Leaf Condition}

\begin{theorem}\label{PG0}
If $\exists \ul{R}\in\Sig_L$ s.t. $\mu_{\ul{R}}(1)>0$, then $P_G(\phi)=0$ and $\phi$ is recurrent.
\end{theorem}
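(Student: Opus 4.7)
The plan is to deduce the theorem from the generalized Ruelle-Perron-Frobenius theory for countable Markov shifts, using the positive eigenfunction provided by Proposition~\ref{psitilda} together with the dual family $\{\mu_{\ul R}\}$. The ingredients needed are already in place: $P_G(\phi)\leq 0$ by Theorem~\ref{ruellemargulis}, $\phi$ has summable variations by Claim~\ref{VolHol}, the hypothesis yields a strictly positive continuous eigenfunction $\psi=\mu_{(\cdot)}(1)$ of $L_\phi$ with eigenvalue $1$ by Proposition~\ref{psitilda}, and $\psi$ is uniformly bounded above by $e^{\epsilon}$ since $\mu_{\ul R}(1)\leq m_{V^u(\ul R)}(V^u(\ul R))\leq e^\epsilon$ by Theorem~\ref{naturalmeasures}.

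First I would normalize the potential: set $\hat\phi:=\phi+\log\psi-\log\psi\circ\sigma_R$, so that $L_{\hat\phi}\mathbb 1=\mathbb 1$. Continuity and strict positivity of $\psi$, its boundedness on single-symbol cylinders (each compact in the locally finite $\Sig_L$), and the summable variations of $\phi$ together give summable variations for $\hat\phi$. The added coboundary telescopes on periodic orbits, so $\hat\phi_n(\ul S)=\phi_n(\ul S)$ whenever $\sigma_R^n\ul S=\ul S$; hence $Z_n(\hat\phi,R)=Z_n(\phi,R)$ for every $n$ and $R$, $P_G(\hat\phi)=P_G(\phi)$, and recurrence is equivalent. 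By bounded distortion (immediate from summable variations), for any $\ul R_0\in[R]$ one has the two-sided comparison
\[
Z_n(\hat\phi,R)\;=\;e^{\pm K_R}\,L_{\hat\phi}^n\mathbb 1_{[R]}(\ul R_0),
\]
via the natural bijection between $n$-periodic loops at $R$ and preimages of $\ul R_0$ under $\sigma_R^n$ with $S_0=R$. Since $L_{\hat\phi}^n\mathbb 1_{[R]}\leq L_{\hat\phi}^n\mathbb 1=\mathbb 1$, this reconfirms $P_G(\phi)\leq 0$.

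The remaining, harder task is to prove recurrence, which then automatically upgrades the upper bound to an equality. For this I would produce a conservative $\sigma$-finite eigenmeasure $\nu$ of $L_\phi^*$ on $\Sig_L$ with eigenvalue $1$ from the family $\{\mu_{\ul R}\}$: pick the chain $\ul R^{\ast}$ given by the hypothesis, push $\mu_{\ul R^{\ast}}$ forward via the canonical coding $x\mapsto\tau(\ul R(x))$ (well defined a.e.\ by Proposition~\ref{cafeashter}, with $\tau$ the truncation to non-positive coordinates), and saturate along the backward shift using Proposition~\ref{PropMuR}. The eigenmeasure relation $L_\phi^*\nu=\nu$ is a direct transcription of Proposition~\ref{PropMuR}; non-triviality follows from $\mu_{\ul R^{\ast}}(1)>0$; strict positivity of $\psi$ everywhere on $\Sig_L$ precludes wandering sets and so yields conservativity; and local finiteness of the Markov cover $\mathcal Z$ (Claim~\ref{localfinito}) yields $\sigma$-finiteness. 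With $(\psi,\nu)$ in hand, Sarig's generalized Ruelle-Perron-Frobenius theorem (see \cite{SarigTDF}) concludes that $\phi$ is recurrent and that the common eigenvalue $1$ equals $e^{P_G(\phi)}$, giving $P_G(\phi)=0$. The main obstacle is precisely this last construction: carefully disintegrating $\mu_{\ul R^{\ast}}$ across cylinders, exploiting the injectivity of $\widehat\pi|_{\Sig^\circ}$, and verifying conservativity and $\sigma$-finiteness without hidden mass escaping to ``infinity'' in $\Sig_L$.
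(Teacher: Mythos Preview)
Your approach is different from the paper's and has a genuine gap at exactly the point you flag as the ``main obstacle''. The paper does not attempt to build a conformal/eigenmeasure first; instead it proves recurrence directly by a Borel--Cantelli argument on the leaf: since a positive $m_{V^u(\ul R)}$-measure of points in $V^u(\ul R)\cap A_{R_0}$ have futures that hit $R_0$ infinitely often, the sets $A_n:=\bigcup_{\ul W=(R_0,\ldots,R_0)} f^{-n}[V^u(\ul R\cdot\ul W)]$ satisfy $m_{V^u(\ul R)}(\limsup A_n)>0$, so $\sum_n m_{V^u(\ul R)}(A_n)=\infty$. By Theorem~\ref{naturalmeasures} each summand equals $\sum_{\ul W} e^{\phi_n(\ul R\cdot\ul W)}$, and bounded distortion (Claim~\ref{VolHol}) converts this to $\sum_n Z_n(\phi,R_0)=\infty$. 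Together with $P_G(\phi)\le 0$ from Theorem~\ref{ruellemargulis}, this gives recurrence and $P_G(\phi)=0$.

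Your sketch breaks down in two places. First, pushing $\mu_{\ul R^\ast}$ forward by $x\mapsto\tau(\ul R(x))$ yields nothing useful: for $\ul R^\ast\in\Sig_L^\circ$ and $x\in W^u(\ul R^\ast)$ one has $\tau(\ul R(x))=\ul R^\ast$ identically, so the pushforward is the point mass $\mu_{\ul R^\ast}(1)\,\delta_{\ul R^\ast}$, and ``saturating along the backward shift'' via Proposition~\ref{PropMuR} does not turn this into a $\phi$-conformal measure---Proposition~\ref{PropMuR} is a forward decomposition under $f$, not the dual relation $L_\phi^*\nu=\nu$. Second, the assertion that ``strict positivity of $\psi$ precludes wandering sets'' is false as a general principle: transient potentials routinely admit strictly positive continuous eigenfunctions (indeed eigenfunctions exist for all eigenvalues $\ge e^{P_G(\phi)}$), and positivity of $\psi$ says nothing about conservativity of a putative $\nu$. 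Sarig's GRPF theorem deduces recurrence \emph{from} the existence of a conservative conformal measure, not from an eigenfunction; you cannot invoke it without first establishing conservativity, and any honest verification of conservativity here amounts to proving that orbits return to a fixed cylinder infinitely often with positive frequency---which is precisely the Borel--Cantelli computation the paper carries out on the leaf.
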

\begin{proof}
By Theorem \ref{ruellemargulis} $P_G(\phi)\leq0$. Therefore, it is enough to show $\sum_{n\geq1}Z_n(\phi,R)=\infty$ for some (any) symbol $R$ s.t. $[R]\subseteq \Sig_L$.\footnote{It implies that $P_G(\phi)\geq0$, therefore $P_G(\phi)=0$ and so $\sum_{n\geq1}Z_n(\phi,R)e^{-nP_G(\phi)}=\sum_{n\geq1}Z_n(\phi,R)=\infty $.} 
By Lemma \ref{canonicassumption} and equation \eqref{forFootnote8} in it, we may assume w.l.o.g. that  $\mu_{\ul{R}}(\bigcupdot\{V^s(\ul{S}):\ul{S}\in\Sig_R^\circ\cap[R_0], \#\{i\geq0:S_i=R_0\}=\infty\})>0$.

Define $A_n:=\bigcup_{\underline{W}=R_0,W_1,...,W_{n-2},R_0}f^{-n}[V^u(\ul{R}\cdot \ul{W})]$, where the ``$\cdot$" product denotes an admissible concatenation. Notice, the inclusion $$V^u(\ul{R})\cap\left(\bigcupdot\{V^s(\ul{S}):\ul{S}\in\Sig_R^\#\cap[R_0], \#\{i\geq0:S_i=R_0\}=\infty\}\right)\subseteq\limsup A_n=\bigcap_{N=1}^\infty\bigcup_{n=N}^\infty A_n$$ is given naturally by the coding of each point in the LHS set. It follows that $\mu_{\ul{R}}(\limsup A_n)>0$, and in particular,
\begin{equation}\label{ForBlue}
    m_{V^u(\ul{R})}(\limsup A_n)>0.
\end{equation}
Therefore, by the Borel-Cantelli lemma,
$$\infty=\sum_{n\geq1}m_{V^u(\ul{R})}(A_n)\leq \sum_{n\geq1}\sum_{\underline{W}=R_0,W_1,...,W_{n-1},R_0}m_{V^u(\ul{R})}(f^{-n}[V^u(\ul{R}\cdot\ul{W})])=\sum_{n\geq1}\sum_{\underline{W}=R_0,W_1,...,W_{n-1	},R_0}e^{\phi_n(\ul{R}\cdot\ul{W})},$$
where the last equality is by Theorem \ref{naturalmeasures}. For every $n\geq1$, for every $\ul{W}=R_0,W_1,...,W_{n-1},R_0$, write $\ul{R}^{\ul{W}}$ for the periodic concatenation of $\ul{W}$ to itself. It follows that $d(\ul{R}\cdot\ul{W},\ul{R}^{\ul{W}})\leq e^{-n}$. Therefore, by Claim \ref{VolHol}, and since $\phi$ is bounded on $[R_0]$,\footnote{By equation \eqref{phibounded},  $\phi$ is bounded on cylinders of length $2$, and there are only finitely many such cylinders contained in $[R_0]$ by the local-compactness of $\Sig_L$.} $\exists C>0,n_0\in\mathbb{N}$ s.t. $\forall n\geq n_0$, $e^{\phi_n(\ul{R}\cdot\ul{W})}=C^{\pm1}e^{\phi_n(\ul{R}^{\ul{W}})}$. We get
$$\infty\leq C^{\pm1}\sum_{n\geq1}\sum_{\underline{W}=R_0,W_1,...,W_{n-1},R_0}e^{\phi_n(\ul{R}^{\ul{W}})}=C^{\pm1}\sum_{n\geq1}Z_n(\phi,R_0).$$
\end{proof}

\section{Existence of a Hyperbolic SRB Measure}\label{existenceanduniqueness}
In the previous sections we constructed a space of absolutely continuous leaf measures which are carried by hyperbolic points, and which have a specified transformation law with good continuity properties. In \textsection \ref{leafocondo} we used the leaf condition in order to extract a maximal irreducible component where no leaf measures are trivial. In \textsection \ref{lalipop} we used the leaf condition to check that $\phi$ is recurrent, and has zero Gurevich pressure.
We can now apply the generalized Ruelle'e Perron-Frobenius Theorem of \cite{SarigNR} and obtain a conservative  $\phi$-conformal measure $p$ on $\Sig_L$. 

Let
$
\mu:=\int_{\Sig_L}\mu_{\underline{R}}dp.
$
In this section we show that subject to the Leaf condition \eqref{Leaf-Condition}, $\mu$ is a finite hyperbolic measure with absolutely continuous conditionals on unstable leaves. 
We remark that now  is the first time in the proof we are using  the leaf condition on $\HWT^{\mathrm{PR}}_\chi$ (as in \eqref{Leaf-Condition}), and not just the leaf condition on $\HWT_\chi$ (as in Definition \ref{leafcondo}). This is needed to guarantee that $\mu$ is finite.


\subsection{$\sigma$-Finite Measures with Absolutely Continuous Conditional Measures on Unstable Leaves}
\begin{definition}\label{ipadpro} Let $(X,\mathcal{B}',\nu)$ be a measure space. Let $T:X\rightarrow X$ be a measurable transformation.
\begin{enumerate}
    \item A measurable set $W\in\mathcal{B}'$ is called {\em wandering} if $\{T^{-n}[W]\}_{n\geq0}$ are pairwise disjoint modulo $\nu$.
    \item $\nu$ is called {\em conservative} if $\nu$ gives every wandering set a measure 0.
    \item  $\nu$ is called {\em non-singular} if $\nu\sim\nu\circ T^{-1}$. In this case, $(X,\mathcal{B}',\nu,T)$ is said to be a {\em non-singular transformation}.
\end{enumerate}
\end{definition}
Halmos' recurrence theorem (\cite[\textsection~1.1.1]{aaronson}) states that if $(X,\mathcal{B}',\nu,T)$ is a non-singular transformation, then $\nu$ is conservative if and only if $\sum_{n\geq0}\mathbb{1}_E\circ T^n=\infty$ $\nu$-a.e. on $E$ for every $E\in \mathcal{B}'$ s.t. $\nu(E)>0$ (i.e. the Poincar\'{e} recurrence theorem holds: in every positive measure set, almost every point returns to it infinitely many times). Every invariant probability measure is conservative. We work in the context of non-singular transformations, and use the characterization of conservativity by Halmos as the definition. Notice that if $(X,\mathcal{B}',\nu,T)$ is an invertible transformation, then any wandering set for $T$ is a wandering set for $T^{-1}$.

\begin{theorem}\label{theorem1} Assume there exists a maximal dimension unstable leaf $V^u$ which gives $\HWT_\chi$ a positive leaf volume (for some $\chi>0$). Then, there exists a hyperbolic periodic point $q$ s.t. $H_\chi(q)$ carries an ergodic, conservative, $\sigma$-finite, and $f$-invariant measure $\mu$ with absolutely continuous conditional measures on unstable leaves. This measure is finite if and only if $\phi$ is positive recurrent, and in this case it is an SRB  measure. 
\end{theorem}
\begin{proof} By Lemma \ref{canonicassumption}, we may assume w.l.o.g. that $\Sig_L$ is irreducible, and that there exists a periodic chain $\ul{R}\in\Sig_L$ s.t. $\mu_{\ul{R}}(1)>0$. Then, by Theorem \ref{PG0}, $P_G(\phi)=0$ and
$\phi$ is recurrent. By \cite[Theorem~1]{SarigNR}, there exists a unique (up to normalization) $\phi$-conformal measure $p$ on $\Sig_L$ (that is, $L_\phi^*p=e^{P_G(\phi)}p=p$, where $L_\phi^*$ is the dual operator of $L_\phi$); and $p$ is ergodic, conservative, non-singular, and finite on cylinders.\footnote{The condition of topological mixing can be assumed w.l.o.g. because of the Spectral Decomposition theorem for topological Markov shifts (see \cite[Theorem~2.5]{SarigTDFSymposium})
.} Define $$\mu:=\int\limits_{\Sig_L}\mu_{\ul{R}}dp(\ul{R}),$$
where $\{\mu_{\ul{R}}\}_{\ul{R}\in \Sig_L}$ is the extended space of absolutely continuous measures on local unstable leaves from Definition \ref{muR}. $\mu$ is not the zero measure, since $\mu(1)=\int\mu_{\ul{R}}(1)dp=\int\psi(\ul{R})dp$,
 where $\psi$ is a continuous positive function (see Proposition \ref{psitilda}); so $\mu(1)>0$. The restriction of $\mu$ to $R\in \mathcal{R}$ is finite (see Claim 3 below), and is equal up to a normalization to a probability measure whose conditional measures on local unstable leaves are absolutely continuous.

\medskip
\underline{Claim 1:} $\mu\circ f^{-1}=\mu$.

\medskip
\underline{Proof:} By Proposition \ref{PropMuR}, $\mu_{\ul{R}}\circ f^{-1}=\sum_{\sigma_R\ul{S}=\ul{R}}e^{\phi(\ul{S})}\mu_{\ul{S}}$, whence $$\mu\circ f^{-1}=\int \mu_{\ul{R}}\circ f^{-1}dp=\int \sum_{\sigma_R\ul{S}=\ul{R}}e^{\phi(\ul{S})}\mu_{\ul{S}}dp=\int \mu_{\ul{R}}dp=\mu,$$
where the penultimate equality is because $L_\phi^*p=e^{P_G(\phi)}p=p$.

\medskip
\underline{Claim 2:} $\mu$ is finite iff $\phi$ is positive-recurrent.

\medskip
\underline{Proof:} As in the beginning of the proof, write $\mu(1)=p(\psi)$, where $\psi$ is a positive continuous eigenfunction of $L_\phi$ with eigenvalue $1$. Then $\psi$ must be the unique (up to normalization) harmonic function associated with $p$ (see \cite[Theorem~3.4]{SarigTDF},\cite[Theorem~1]{SarigNR}). Then, by \cite[Proposition~3.5]{SarigTDF}, $p(\psi)<\infty$ iff $\phi$ is positive-recurrent.

\medskip
\underline{Claim 3:} $\mu$ is conservative. The first item in the list below also shows that $\mu$ is finite on Pesin level sets (see Definition \ref{PesinLevelSets} below).

\medskip
\underline{Proof:}
Let $A\subseteq M$ be a measurable set s.t. $\mu(A)>0$. Write $\mathcal{R}=\{R^{(i)}\}_{i\in \mathbb{N}}$, $k_i:=\cup\{V^u(\ul{R})\cap A_{R^{(i)}}:\ul{R}\in [R^{(i)}]\cap \Sig_L^\#\}$ (measurability of $k_i$ can be shown similarly to the measurability of $A_{R^{(i)}}$, see Claim \ref{ARMeasurability}). It is clear that $\mu$ is carried by $\bigcup_{i\geq0}k_i$, since $p$ is conservative and carried by $\Sig_L^\#$. We show the following three steps to complete the proof:
\begin{itemize}
    \item \underline{$\forall i\in\mathbb{N}$, $\mu(k_i)<\infty$}:  If $\ul{S},\ul{R}\in\Sig^\#_L,R_0=R^{(i)}$ and $(V^u(\ul{R})\cap A_{R^{(i)}})\cap(V^u(\ul{S})\cap A_{S_0})\neq\varnothing$, then $\exists x$ s.t. $x=\widehat{\pi}(\ul{R}^\pm)=\widehat{\pi}(\ul{S}^\pm)$ where $\ul{R}^\pm,\ul{S}^\pm\in\Sig^\#$ and $S^\pm_0=S_0,R^\pm_0=R_0=R^{(i)}$. Therefore $S_0\in\{S\subseteq Z(v):Z(u)\cap Z(v)\neq\varnothing, Z(u)\supseteq R^{(i)}\}$, and this collection is finite by the local finiteness of $\mathcal{Z}$ and its refinement. In addition, $p$ is finite on cylinders. Thus, $\mu(k_i)\leq \sum_{S\subseteq Z(v):Z(u)\cap Z(v)\neq\varnothing, Z(u)\supseteq R^{(i)}}p([S])<\infty$.
    \item \underline{$\forall i\geq0$, $\mu$-a.e. $x\in A\cap k_i$ $\exists n_j\uparrow\infty$ s.t. $f^{-n_j}(x)\in A\cap k_i$}: For every $i\geq0$, $p$-a.e. $\ul{R}\in[R^{(i)}]$ returns to $[R^{(i)}]$ infinitely often with iterations of $\sigma_R$ since $p$ is conservative. For any such chain and $n\geq0$, $f^{-n}[V^u(\ul{R})\cap A_{R_0}]\subseteq V^u(\sigma_R^n\ul{R})\cap A_{R_{-n}}$.\footnote{If $x=\widehat{\pi}(\ul{R}\cdot\ul{S})$, where the dot means an admissible concatenation, and $\ul{S}\in\Sig_R^\circ$, then $f^{-n}(x)=\widehat{\pi}(\sigma_R^n\ul{R}\cdot((R_{-n},...,R_0)\cdot\ul{S}))$ and $(R_{-n},...,R_0)\cdot\ul{S}\in\Sig_R^\circ$ (by the Markov property); in addition $\ul{R}\in \Sig_L^\#\Rightarrow \sigma_R^n\ul{R}\in\Sig_L^\#$.} Therefore, $\mu$-a.e. point in $k_i$ returns to $k_i$ infinitely often with iterations  of $f^{-1}$. The first return map to $k_i$, $\overline{f}$, is well defined. If $\mu(k_i\cap A)>0$, then $\mu|_{k_i}(\cdot):=\frac{\mu(k_i\cap \cdot)}{\mu(k_i)}$ is $\overline{f}$-invariant, and finite by the first step. Therefore, by the Poincar\'{e} recurrence theorem, $\mu$-a.e. point in $k_i\cap A$ returns to $k_i\cap A$ infinitely often with iterations of $f^{-1}$.
    \item \underline{$\mu$ is conservative}: Assume for contradiction that $B:=\{x\in A:\nexists n\geq0, f^n(x)\in A\}$ has a positive $\mu$ measure. $B=\bigcup_{i\geq0}(B\cap k_i)$ mod $\mu$. Therefore, by step 2, $\mu$-a.e. point in $B$ returns to it infinitely often with iterations of $f^{-1}$. Take any such recurrent point $x$, w.l.o.g. $x,f^{-n}(x)\in B\subseteq A$, whence $f^n(f^{-n}(x))=x\in A$ - a contradiction to the definition of $B$! So $\mu(B)=0$. Therefore, almost every point in $A$ returns to it infinitely often with iterations of $f$ (recall the remark following Definition \ref{ipadpro}).
\end{itemize}

\medskip
\underline{Claim 4:} $\mu$ is carried by $H_\chi(q)\subseteq \HWT_\chi$, where $q$ is a hyperbolic periodic point.

\medskip
\underline{Proof:} Since $p$ is conservative, $\mu$ is carried by a union of sets which carry $\{\mu_{\ul{R}}\}_{\ul{R}\in\Sig_L^\#}$. Each such measure is carried by $V^u(\ul{R})\cap A_{R_0}\subseteq \widehat{\pi}[\Sig^\#]$. Let $\ul{R}'\in\Sig$ be any periodic chain, and define $q:=\widehat{\pi}(\ul{R}')$. Then, by irreducibility, $\widehat{\pi}[\Sig^\#]\subseteq H_\chi(q)$.

\medskip
\underline{Claim 5:} $\mu$ is ergodic.

\medskip
\underline{Proof:} It follows directly from Proposition \ref{GeneralizedEntropyFormula} below, since $p$ is ergodic.
\end{proof}


\begin{prop}\label{GeneralizedEntropyFormula} Under the assumptions and notations of Theorem \ref{theorem1}, the measure $\mu$ 
is proportional to $\widehat{\psi\cdot p}\circ \widehat{\pi}^{-1}$, where $\widehat{\psi\cdot p}$ is the unique shift-invariant extension of $\psi\cdot p$ to $\Sig$ such that $\widehat{\psi\cdot p}\circ \tau^{-1}=\psi\cdot p$, and $\tau$ is the projection to the non-positive coordinates.\end{prop}
\begin{proof}
As explained in claim 1 of Theorem \ref{theorem1}, $\mu_{\ul{R}}(1)=\psi(\ul{R})$ by the uniqueness of the continuous and positive $\phi$-harmonic function on $\Sig_L$ ($\psi$ is only determined up to scaling, so we choose the version $\psi(\ul{R})=\mu_{\ul{R}}(1)$). Write $\forall\ul{R}\in\Sig^\#_L$, $p_{\ul{R}}:=\frac{\mu_{\ul{R}}}{\psi(\ul{R})}=\frac{\mu_{\ul{R}}}{\mu_{\ul{R}}(1)}$, a probability measure on $V^u(\ul{R})$, which is absolutely continuous w.r.t. its leaf volume. One can check easily that $\psi\cdot p$ must indeed be invariant. Then,
$$\mu=\int_{\Sig_L}\mu_{\ul{R}}dp=\int_{\Sig_L}p_{\ul{R}}d(\psi\cdot p)=\int_{\Sig}p_{\tau(\ul{R})}d\widehat{\psi\cdot p}.$$
Notice that $\forall \ul{R}\in \Sig_L$, $p_{\ul{R}}\Big(\Big\{x\in M: \ul{R}\in\tau\Big[\widehat{\pi}^{-1}\left[\left\{x\right\}\right]\Big]\Big\}^c\Big)=0$. Then, for every Borel measurable $E$,
$$\mu(E)=\int_{\Sig}p_{\tau(\ul{R})}(E)d\widehat{\psi\cdot p}=\int_{\widehat{\pi}^{-1}[E]}p_{\tau(\ul{R})}(E)d\widehat{\psi\cdot p}\leq\int_{\widehat{\pi}^{-1}[E]}1d\widehat{\psi\cdot p}=\widehat{\psi\cdot p}\circ\widehat{\pi}^{-1}(E).$$
Now, since $p$ is conservative and ergodic (and $\sigma$-finite), so must $\psi\cdot p$ be, and so also $\widehat{\psi\cdot p}$, and in turn also $\widehat{\psi\cdot p}\circ\widehat{\pi}^{-1}$. In claim 3 of Theorem \ref{theorem1} we have seen that $\mu$ is conservative (and $\sigma$-finite). So $\mu$ is an invariant, conservative, $\sigma$-finite measure, dominated by an ergodic, invariant, conservative, $\sigma$-finite measure $\widehat{\psi\cdot p}\circ\widehat{\pi}^{-1}$, whence $\mu\propto\widehat{\psi\cdot p}\circ\widehat{\pi}^{-1}$ (with a proportion constant less or equal to $1$).
\end{proof}

\begin{cor}[Entropy formula]\label{kukilida}
Under the assumptions of Theorem \ref{theorem1}, if the measure $\mu$ in Theorem \ref{theorem1} is finite then it satisfies the entropy formula $h_\mu(f)=\int \log\Jac(d_xf|_{H^u(x)}) d\mu(x)$.
\end{cor}
\begin{proof}
	Under the notation of Theorem \ref{theorem1}, let $\varphi:\Sig\rightarrow \mathbb{R}$ be the geometric potential (recall Definition \ref{TGP}). As in Theorem \ref{ruellemargulis}, let $\varphi^-:\Sig_L\rightarrow\mathbb{R}$, $A:\Sig\rightarrow \mathbb{R}$, s.t. $A$ is bounded and $\varphi=\varphi^-+A\circ \sigma^{-1}-A$ (when $\varphi^-$ is associated with its natural extension to $\Sig$). By part 3 in the proof of Theorem \ref{ruellemargulis} and by Theorem \ref{PG0}, $P_G(\varphi^-)=0$. $\varphi$ is bounded by $\log M_f$, and since $A$ is also bounded, $\varphi^-$ is bounded as well. By the variational principle, $P_G(\varphi^-)=\sup\{h_{\nu}(\sigma_R)+\int \varphi^-d\nu:\nu\text{ is an inv. prob. on }\Sig_L\}$ (see \cite[Theorem~4.4]{SarigTDF}).
	
Let $p'$ be the $\varphi^-$-conformal measure on $\Sig_L$, and let $p$ be the $\phi$-conformal measure on $\Sig_L$. By part 2 in Theorem \ref{ruellemargulis}, recurrence of $\phi$ implies recurrence of $\varphi^-$, and so $p'$ and $p$ are conservative and ergodic. Let $\psi'$ and $\psi$ be the unique (up to normalization) continuous and positive $\varphi^-$-harmonic function and $\phi$-harmonic function on $\Sig_L$. Once can check that $\psi\cdot p$, $\psi'\cdot p'$ are invariant measures.

\ul{Part 1:} $\psi\cdot p=\psi'\cdot p'$.

\ul{Proof:} Fix $[R]\subseteq \Sig_L$. As in Claim \ref{VolHol}, there exists $C_R>1$ s.t. $|\log\psi|\leq C_R$ on $[R]$ and if $n\geq n_0$, then $\sup\limits_{\overset{d(\ul{R},\ul{S})\leq e^{-n},}{S_{-n+1}=R_{-n+1}=R}}|\phi_n(\ul{R})-\phi_n(\ul{S})|\leq C_R$. Let $(R,\ul{w},R)$ be a word of length $n\geq n_0$, and let $\zeta_{(R,\ul{w},R)}\in \Sig_L$ be the periodic extension of $(R,\ul{w},R)$. Then,

\begin{align}\label{screamshishi}
(\psi\cdot p)([R,\ul{w},R])=&\int \mathbb{1}_{[R,\ul{w},R]}\cdot \psi dp
= e^{\pm C_R}\int L_{\phi}^{n-1}\mathbb{1}_{[R,\ul{w},R]}dp= 
\\
= &e^{\pm C_R}\int \sum_{\sigma_R^{n-1}\ul{S}=\ul{R}}e^{\phi_{n-1}(\ul{S})}\mathbb{1}_{[R,\ul{w},R]}(\ul{S})dp(\ul{R})=e^{\pm 2C_R}e^{\phi_{n-1}(\zeta_{(R,\ul{w},R)})}p([R])
.\nonumber
\end{align}
$p([R])$ is finite by \cite[Theorem~1]{SarigNR}, and positive since $[w_{|\ul{w}|-1}]\subseteq \sigma_R[R] $. W.l.o.g. $C_R\geq |\log p(\sigma_R[R])|$, then $(\psi\cdot p)([R,\ul{w},R])=e^{\pm3 C_R} e^{\phi_n(\zeta_{(R,\ul{w},R)})}$. By \cite[Proposition~6.1]{SBO} and the $C^{1+\beta}$ regularity of $f$, $\varphi$ is H\"older continuous, then as in equation \eqref{screamshishi}, $\exists C_R'>1$ independent of $\ul{w}$ s.t. $(\psi'\cdot p')([R,\ul{w},R])=e^{\pm3C_R'} e^{\varphi^-_{n-1}(\zeta_{(R,\ul{w},R)})}$.

Since $\psi\cdot p$ and $\psi'\cdot p'$ are both conservative, they are carried by the $\sigma$-algebra generated by cylinders of the form $\{[R,R_{-n+2},...,R_{-1},R]\}_{[R]\subseteq \Sig_L, n\geq n_0}$. Then $\psi\cdot p\sim \psi'\cdot p'$. By Theorem \ref{theorem1}, $\phi$ must be positive recurrent. Then, by part 2 of Theorem \ref{ruellemargulis}, $\varphi^-$ must be positive recurrent as well; hence $\psi\cdot p,\psi'\cdot p'$ are finite measures. Since $\psi\cdot p$ and $\psi'\cdot p'$ are equivalent ergodic invariant finite measures, $\psi\cdot p\propto \psi'\cdot p'$. W.l.o.g. the scaling of $\psi,\psi'$ is chosen so $\psi\cdot p= \psi'\cdot p'$ is a probability measure.

\ul{Part 2:} Let $m$ be the unique invariant extension of $\psi'\cdot p'$ (equivalently, $\psi\cdot p$) to $\Sig$; in addition, the extension satisfies $h_{\psi'\cdot p'}(\sigma_R)=h_m(\sigma)$. By \cite[Proposition~8.1]{CyrSarig}, $\psi'\cdot p'$ is the equilibrium measure of $\varphi^-$. Thus, $0=P_{G}(\varphi^-)=h_{\psi'\cdot p'}(\sigma_R)+\int \varphi^-d(\psi'\cdot p')= h_{m}(\sigma)+\int \varphi^-dm= h_{m}(\sigma)+\int \varphi dm $.

\ul{Part 3:} $m$ is carried by $\Sig^\#$, and $\widehat{\pi}|_{\Sig^\#}$ is finite-to-one. Then $h_{m\circ \widehat{\pi}^{-1}}(f)=h_{m}(\sigma)$. Thus, by Proposition \ref{GeneralizedEntropyFormula} and the definition of $\varphi$ (Definition \ref{TGP}), $h_{\mu}(f)=\int \log\Jac(d_xf|_{H^u(x)})d\mu(x)$.
\end{proof}

\noindent\textbf{Remark:} Corollary \ref{kukilida} can be extended to all hyperbolic SRB measures without using the Ledrappier-Young entropy formula. Details will appear elsewhere.

\subsection{Positive Recurrence and Finiteness}\label{posrecnew}
In Theorem \ref{theorem1} we saw that the leaf condition with $\HWT_\chi$ implies the existence of an ergodic, conservative, $f$-invariant measure with absolutely continuous conditional measures on unstable leaves, which is finite if and only if the geometric potential (equivalently, $\phi$) is positive recurrent. We now provide a different characterization of the positive recurrence of $\phi$ in terms of a stronger leaf condition.

Recall Theorem \ref{epsilonika}, and the constant $\epsilon_\chi>0$ s.t. $\HWT_\chi=\HWT_\chi^{\epsilon_\chi}$.
\begin{definition}\label{PesinLevelSets}
	A {\em Pesin level set} of $\chi\text{-}\mathrm{summ}$, is a set $\Lambda\subseteq \chi\text{-}\mathrm{summ}$, s.t. $\exists N\in\mathbb{N}$ s.t.
	$\forall x\in \Lambda$, there exists a function $q:\{f^n(x)\}_{n\in\mathbb{Z}}\rightarrow (0,\epsilon_\chi]\cap \{e^{\frac{-\ell\cdot\epsilon_\chi}{3}}\}_{\ell\geq0}$ s.t. $q(x)\geq \frac{1}{N}$, $\frac{q\circ f}{q}=e^{\pm\epsilon_\chi}$, and $\forall n\in \mathbb{Z}$, $q\circ f^n(x)\leq Q_{\epsilon_\chi}\circ f^n(x)$. $\Lambda$ is said to be {\em of level $N$}.
\end{definition}
This definition is due to Pesin, see \cite[Definition~2.2.6]{BP}.

\begin{claim}\label{Nov19}
For every Pesin level set $\Lambda$, $\Lambda\cap\HWT_\chi$ is contained in a finite number of partition elements of $\mathcal{R}$. Conversely, every partition element of $\mathcal{R}$ is contained in a Pesin level set.	
\end{claim}
\begin{proof}
We begin with the converse statement. Given $R\in\mathcal{R}$, let $v\in\mathcal{V}$, $v=\psi_x^{p^s,p^u}$, s.t. $R\subseteq Z(v)$. By definition, every point $y\in R$ admits a coding $\ul{v}\in[v]\cap \Sigma^\#$, and thus belongs to the Pesin level set of level $\lceil \frac{1}{p^s\wedge p^u}\rceil$ (since one can choose the kernel $q(f^i(y))=p_i^s\wedge p_i^u$, where $\ul{v}=(\psi_{x_i}^{p^s_i,p^u_i})_{i\in\mathbb{Z}}$).

Now for the first statement. Let $\Lambda$ be a Pesin level set of level $N\in\mathbb{N}$.

\ul{Step 1:} $\forall x\in \Lambda\cap\HWT_\chi$, $x$ can be coded by a chain $\ul{v}\in \Sigma^\#$, where $v_i=\psi_{x_i}^{p^s_i,p^u_i}$, $i\in\mathbb{Z}$, and $p^s_i\wedge p^u_i\geq q(f^i(x))$.

Proof: $\exists q_1,q_2:\{f^n(x)\}_{n\in\mathbb{Z}}\rightarrow(0,\epsilon_\chi]\cap\{e^{\frac{-\epsilon_\chi\cdot\ell}{3}}\}_{\ell\geq0}$ where $q_1$ is given by the fact that $x$ belongs to the Pesin level set of level $N$, $\Lambda$, and $q_2$ is given by the recurrent $\epsilon_\chi$-weak temperability of $x$. Define $q:=\max\{q_1,q_2\}$. It follows trivially that $q:\{f^n(x)\}_{n\in\mathbb{Z}}\rightarrow(0,\epsilon_\chi]\cap\{e^{\frac{-\epsilon_\chi\cdot\ell}{3}}\}_{\ell\geq0}$, $q(f^n(x))\leq Q_{\epsilon_\chi}(f^n(x))$ $\forall n\in\mathbb{Z}$, and $\limsup\limits_{n\rightarrow\pm\infty}q\circ f^n(x)>0$. In addition, $\frac{q\circ f}{q}=\frac{\max\{q_1\circ f, q_2\circ f\}}{\max\{q_1, q_2\}}=\max\{\frac{q_1\circ f}{\max\{q_1, q_2\}},\frac{q_2\circ f}{\max\{q_1, q_2\}}\}\leq \max\{\frac{q_1\circ f}{q_1},\frac{q_2\circ f}{q_2}\}\leq \max\{e^{\epsilon_\chi}, e^{\epsilon_\chi}\}= e^{\epsilon_\chi}$. Similarly, $\frac{q\circ f}{q}\geq e^{-\epsilon_\chi}$. By \cite[Proposition~4.5,Lemma~4.6]{Sarig}, $\forall x\in \Lambda\cap\HWT_\chi$, $x$ can be coded by a chain $\ul{v}\in \Sigma^\#$, where $v_i=\psi_{x_i}^{p^s_i,p^u_i}$, $i\in\mathbb{Z}$, and $p^s_i\wedge p^u_i\geq q(f^i(x))$.

\ul{Step 2:} $\forall \ul{u},\ul{v}\in\Sigma^\#$ s.t. $\pi(\ul{u})=\pi(\ul{v})$, $\frac{p^s_0\wedge p^u_0}{q^s_0\wedge q^u_0}=e^{\pm\epsilon_\chi^\frac{1}{3}}$.

Proof: This is the content of \cite[Proposition~8.4]{Sarig} when $d=2$ (see \cite[Lemma~4.12]{SBO} for $d>2$).

\ul{Step 3:} Consider the set of double Pesin-charts $A_N:=\{\psi_x^{p^s,p^u}\in \mathcal{V}: p^s\cap p^u\geq\frac{1}{2N}\}$. By the discreteness of $\mathcal{V}$ (see \cite[Propostion~3.5]{Sarig}), $A_N$ is finite. Thus, $\widetilde{A}_N:=\{R\in \mathcal{R}:R\subseteq Z_v, v\in A_N\}$ is finite as well. Thus, by step 1 and step 2, $\Lambda\cap \HWT_\chi\subseteq \bigcupdot \widetilde{A}_N$.
\end{proof}	

\begin{definition}\label{posrecpts}
	The points in $\HWT_\chi^{\mathrm{PR}}:=\{x\in\HWT_
	\chi:\exists r_x>1\text{ s.t. }\limsup\limits_{n\rightarrow\infty}\frac{1}{n}\sum_{k=0}^{n-1}\mathbb{1}_{\Lambda_{r_x}}\circ f^k(x)>0\}$, where $\Lambda_{r_x}$ is a Pesin level set of level $r_x$, are called the {\em positively recurrent points} in $\HWT_\chi$.
\end{definition}

\begin{theorem}[The Ratio Ergodic Theorem]\label{Hopfs} Let $(X,\mathcal{B},\mu,T)$ be a $\sigma$-finite measure preserving transformation, and assume that $\mu$ is conservative. Then for $\mu$-a.e. $x\in X$, $\forall g,h\in L^1(\mu)$ s.t. $h\geq0$ and $\int hd\mu>0$,
$$\frac{\sum_{k=0}^ng\circ T^k(x)}{\sum_{k=0}^nh\circ T^k(x)}\xrightarrow[n\rightarrow\pm\infty]{}\frac{\int gd\mu}{\int hd\mu}.$$
\end{theorem}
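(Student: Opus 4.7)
The result is Hopf's classical ratio ergodic theorem, so the plan is to follow the standard proof architecture (as in Aaronson's book on infinite ergodic theory). The strategy has four stages.

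First, I would establish Hopf's maximal ergodic theorem: for any $\phi\in L^1(\mu)$ and $N\in\mathbb{N}$, writing $S_n\phi:=\sum_{k=0}^{n-1}\phi\circ T^k$ and $M_N\phi:=\max_{1\leq n\leq N}S_n\phi$, one has
\[
\int_{\{M_N\phi>0\}}\phi\,d\mu\geq0.
\]
This is the classical telescoping argument: on $\{M_N\phi>0\}$ write $\phi=S_n\phi - S_{n-1}\phi\circ T$ where $n$ is the first index realizing a positive Birkhoff sum, and use $T$-invariance of $\mu$. No conservativity is needed here.

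Second, I would use conservativity (together with Halmos' recurrence theorem, cited in the paper right before Definition~\ref{GSRB}) to show that for every $h\in L^1_+(\mu)$ with $\int h\,d\mu>0$, the sums $S_nh(x)\to\infty$ for $\mu$-a.e.\ $x$. Indeed, applying recurrence to the sets $\{h>1/k\}$, $k\in\mathbb{N}$, one sees that $\mu$-a.e.\ point visits some such set infinitely often, so $S_nh(x)$ is unbounded, and monotonicity then gives divergence.

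Third, I would reduce to the ergodic case (the statement in the paper presumes the limit is a constant, so I implicitly use ergodicity of $\mu$; in the non-ergodic situation one replaces $\int g/\int h$ by $\mathbb{E}(g\mid\mathcal{I})/\mathbb{E}(h\mid\mathcal{I})$ on the invariant $\sigma$-algebra $\mathcal{I}$). Fix $\beta>\int g\,d\mu/\int h\,d\mu$ and put $\phi:=g-\beta h\in L^1(\mu)$, so that $\int\phi\,d\mu<0$. Let $A_\beta:=\{x:\limsup_n S_ng(x)/S_nh(x)>\beta\}$. Since $S_nh(x)\to\infty$ a.e.\ by Stage~2, the event $\limsup S_ng/S_nh>\beta$ forces $S_n\phi$ to be positive for some $n$, so $A_\beta\subseteq\bigcup_N\{M_N\phi>0\}$ up to a null set. $A_\beta$ is $T$-invariant, so by ergodicity either $\mu(A_\beta)=0$ or $\mu(A_\beta^c)=0$. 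In the latter case $\{M_N\phi>0\}\uparrow X$ a.e.; applying Stage~1 and passing to the limit via dominated convergence (using $|\phi|\in L^1$) yields $\int\phi\,d\mu\geq0$, contradicting $\beta>\int g/\int h$. Hence $\mu(A_\beta)=0$ for every such $\beta$, giving $\limsup\leq\int g/\int h$ a.e. Applying the same argument to $-g$ (or to $\alpha h-g$ for $\alpha<\int g/\int h$) yields the matching liminf bound.

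Fourth, since $T$ is invertible and $\mu$ is both $T$- and $T^{-1}$-invariant and conservative, running the same argument for $T^{-1}$ gives convergence as $n\to-\infty$ to the same limit. The main obstacle is Stage~3, specifically the careful justification that on the invariant set $A_\beta^c$ (viewed as $X$ after discarding a null set) one may legitimately pass to the $N\to\infty$ limit in Hopf's maximal inequality; this is where conservativity (via Stage~2) does the essential work, and it is also where ergodicity of $\mu$ enters to collapse the invariant set dichotomy into the uniform bound claimed.
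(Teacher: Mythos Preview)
The paper does not give its own proof of this statement; immediately after the theorem it simply records ``This theorem is due to E.~Hopf, and an elegant proof in English can be found in \cite{Zweimuller}.'' So there is nothing to compare against on the paper's side. Your outline is a correct sketch of the classical argument (maximal ergodic theorem, conservativity forcing $S_nh\to\infty$, then the $\phi=g-\beta h$ trick), and you rightly flag that the constant limit $\int g\,d\mu/\int h\,d\mu$ tacitly assumes ergodicity, which is how the result is actually applied later in the paper (Theorem~\ref{infinitephysical}) to the ergodic GSRB $\mu$.
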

This theorem is due to E. Hopf, for a modern proof see \cite{Zweimuller}.

\noindent Recall Definition \ref{leafcondo}.

\begin{theorem}\label{posrecclaim}
	There exists a $\chi$-hyperbolic SRB measure if and only if the leaf condition is satisfied for $\HWT_\chi^{\mathrm{PR}}$.
\end{theorem}
\begin{proof}
If there exists a $\chi$-hyperbolic SRB measure, then the leaf condition is satisfied	trivially for $\HWT_\chi^{\mathrm{PR}}$. Next, assume that the leaf condition is satisfied by $\HWT_\chi^{\mathrm{PR}}
$. 
Let $x\in \HWT_\chi^{\mathrm{PR}}$, then by Claim \ref{Nov19}, there must be some symbol $R_x$ s.t. $\limsup \frac{1}{n}\sum_{k=0}^{n-1}\mathbb{1}_{R_x}\circ f^k(x)>0$. Therefore, there exists $R\in\mathcal{R}$ s.t. the leaf condition is satisfied for $\{x\in R: \limsup \frac{1}{n}\sum_{k=0}^{n-1}\mathbb{1}_{R}\circ f^k(x)>0\}$. Hence, $\exists \ul{R}'\in\Sig_L^\#\cap[R]$ s.t. $\mu_{\ul{R}'}(\{x\in R: \limsup \frac{1}{n}\sum_{k=0}^{n-1}\mathbb{1}_{R}\circ f^k(x)>0\})>0$. Let $\ul{R}\in \langle R\rangle^{-\mathbb{N}}\cap \Sig^\#_L\cap [R]$, and let $\Gamma_{\ul{R}}: V^u(\ul{R}')\cap A_R\rightarrow V^u(\ul{R})\cap A_R $ be the holonomy map along the stable leaves in $A_R$. Then, $\forall x\in V^u(\ul{R}')\cap A_R\cap \{x\in R: \limsup \frac{1}{n}\sum_{k=0}^{n-1}\mathbb{1}_{R}\circ f^k(x)>0\}$, $\Gamma_{\ul{R}}(x)$ has a coding in $\Sig^\#$ with the same future as $\ul{R}(x)$. 
Thus, by Claim \ref{Nov19}, $\Gamma_{\ul{R}}(x)\in \HWT_\chi^{\mathrm{PR}}$. By Pesin's absolute continuity theorem \cite[Theorem~8.6.1]{BP}, $\Gamma_{\ul{R}}$ maps a positive leaf volume set to a positive leaf volume set. So
\begin{equation}\label{GammaPR2}\forall \ul{R}\in [R]\cap\Sig_L^\#\cap \langle R\rangle^{-\mathbb{N}}\text{, }\mu_{\ul{R}}(\HWT_\chi^{\mathrm{PR}})>0.\end{equation}
 We can now carry out the construction in Theorem \ref{theorem1}, and obtain the conservative, ergodic, and $f$-invariant measure $\mu=\int_{\Sig_L^\#}\mu_{\ul{R}}dp$, where $\forall \ul{R}\in \Sig_L^\#$, $\mu_{\ul{R}}$ is carried by $V^u(\ul{R})\cap A_{R_0}$; and $p$ gives a positive measure to every cylinder by \cite[Claim~3.5, pg.~76]{SarigTDF}. By equation \eqref{GammaPR2},  $\mu(\HWT_\chi^{\mathrm{PR}})>0$. Therefore, $\exists$ Pesin level set $\Lambda$, s.t. $\mu(\limsup \frac{1}{n}\sum_{k=0}^{n-1} \mathbb{1}_{\Lambda}\circ f^k)>0$, while $\mu(\Lambda)<\infty$ by claim 3 in Theorem \ref{theorem1}.

However, by the ratio ergodic theorem (see Theorem \ref{Hopfs}), $\forall M\in\mathbb{N}$, let $\Lambda_M$ be a Pesin level set of level $M$. Then,
$\limsup\frac{1}{n}\sum_{k=0}^{n-1} \mathbb{1}_{\Lambda}\circ f^k(x)\leq\limsup\frac{\sum_{k=0}^{n-1} \mathbb{1}_{\Lambda}\circ f^k(x)}{\sum_{k=0}^{n-1} \mathbb{1}_{\Lambda_M}\circ f^k(x)}=\frac{\mu(\mathbb{1}_{\Lambda})}{\mu(\mathbb{1}_{\Lambda_M})}$, for $\mu$-a.e. $x$. Assume for contradiction that $\mu$ is infinite. Then, $\frac{\mu(\mathbb{1}_{\Lambda})}{\mu(\mathbb{1}_{\Lambda_M})} \xrightarrow[M\rightarrow\infty]{}0$, and so $\limsup\frac{1}{n}\sum_{k=0}^{n-1} \mathbb{1}_{\Lambda}\circ f^k(x)=0$ for $\mu$-a.e. $x$. A contradiction.
Therefore $\mu$ is finite.
\end{proof}

\begin{appendix}
\section{Additional Properties of $\phi$}\label{append}

Recall the definition of the potential $\phi:\Sig_L\rightarrow \mathbb{R}^-$ from Theorem \ref{naturalmeasures}: $\phi(\ul{R})=\log(m_{V^u(\sigma_R\ul{R})}(f^{-1}[V^u(\ul{R})]))$. The questions of the boundedness of $\phi$ and its cohomology relation with the geometric potential are not necessary for the proof of our main results, but are nonetheless interesting. In this appendix we discuss these questions.

\subsection{Boundness of $\phi$}\label{append1}

Since the range of $\phi$ is $\mathbb{R}^-$, it is clear that $\phi$ is bounded from above. In this section we show that $\phi$ is also bounded from below.
%
%


\begin{lemma}\label{guigant}
$\exists \wh{\gamma}\geq 1$, a  constant depending only on $\chi,M$ and $f$, s.t. $\Vol\left(V^u(\sigma_R\ul{u})\right)\leq \wh{\gamma} \cdot \Vol\left( V^u(\ul{u})\right)$, $\forall\ul{u}\in \Sigma_L$.
\end{lemma}
\begin{proof}
Write $u_{-1}=\psi_{x}^{p^u,p^s}$, $u_0=\psi_y^{q^u,q^s}$. Let $F$ be the representing function of $V^u(\ul{u})$ in $u_{0}$, and $G$ the representing function of $V^u(\sigma_R\ul{u})$ in $u_{-1}$.

\ul{Step 1:} Comparing $p^u$ and $q^u$: Since $(u_{-1},u_0)\in \mathcal{E}$ (i.e. $u_{-1}\rightarrow u_0$, see \cite[Definition~2.23]{SBO}), $q^u=\min\{e^\epsilon p^u,Q_\epsilon(y)\}$. Thus
\begin{align}\label{radioblanew}
\frac{p^u}{q^u}=\frac{p^u}{\min\{e^\epsilon p^u,Q_\epsilon(y)\}}\leq \max\{e^{-\epsilon}, \frac{Q_\epsilon(x)}{Q_\epsilon(y)}\} \text{ }(\because p^u\leq Q_\epsilon(x)).
\end{align}
The fact that $(u_{-1},u_0)\in \mathcal{E}$ also implies that $\psi_{f(x)}^{q^s\wedge q^u}$ $\epsilon$-overlaps $\psi_{y}^{q^s\wedge q^u}$ (see \cite[Definition~2.18]{SBO}); which in particular implies (by \cite[Lemma~2.20]{SBO}) that $\frac{\|C_\chi^{-1}(f(x))\|}{\|C_\chi^{-1}(y)\|}=e^{\pm \epsilon^3}$. Thus, by definition,
\begin{equation*}
Q_\epsilon(y)\leq\frac{\epsilon^\frac{90}{\beta}}{3^\frac{6}{\beta}} \|C_\chi^{-1}(y)\|^\frac{-48}{\beta}\leq \frac{\epsilon^\frac{90}{\beta}}{3^\frac{6}{\beta}} \|C_\chi^{-1}(f(x))\|^\frac{-48}{\beta}\cdot (e^{\epsilon^3})^\frac{48}{\beta}\leq e^{\frac{\epsilon}{3}}\cdot Q_\epsilon(f(x))\cdot (e^{\epsilon^3})^\frac{48}{\beta}\leq Q_\epsilon(f(x))\cdot e^\frac{\epsilon}{2},
\end{equation*}

\noindent when $\epsilon>0$ is sufficiently small. Similarly, $Q_\epsilon(y)\geq e^{-\frac{\epsilon}{2}}Q_\epsilon(f(x))$; and so $\frac{Q_\epsilon(f(x))}{Q_\epsilon(y)}=e^{\pm\frac{\epsilon}{2}}$. In addition, by \cite[Lemma~2.15]{SBO}, $\exists \omega_0\geq1$ depending only on $\chi, M$ and $f$, s.t. $\frac{Q_\epsilon(f(x))}{Q_\epsilon(x)}=\omega_0^{\pm1}$. Plugging these inequalities in equation \eqref{radioblanew}, we obtain
\begin{equation}\label{radioblanew2}
	\frac{p^u}{q^u}\leq \max\{e^{-\epsilon}, \frac{Q_\epsilon(x)}{Q_\epsilon(f(x))}\cdot \frac{Q_\epsilon(f(x))} {Q_\epsilon(y)}\}
	\leq \max\{e^{-\epsilon}, \omega_0\cdot e^\frac{\epsilon}{2}\}\leq e^\epsilon\cdot \omega_0.
\end{equation}

\ul{Step 2:} Write $\widetilde{F}(t):=(F(t),t)$, $\widetilde{G}(t):=(G(t),t)$. Following from Definition \ref{def135}, $\Jac(d_\cdot \widetilde{F}),\Jac(d_\cdot \widetilde{G})=e^{\pm\epsilon}$, for $\epsilon>0$ sufficiently small.

\ul{Step 3:} Comparing $\Jac(C_\chi(x))=|\det C_\chi(x) |$ and $\Jac(C_\chi(y)) =|\det C_\chi(y) | $: By \cite[Theorem~2.4]{SBO}, $\forall \xi\in T_xM$, $$|C_\chi^{-1}(x)\xi|=\sqrt{S^2(x,\xi_s)+U^2(x,\xi_u)},$$
where $\xi=\xi_s+\xi_u$, $\xi_s\in H^s(x)$, $\xi_u\in H^u(x)$, and
\begin{align*}
	S^2(x,\xi_s)=2\sum_{m=0}^\infty|d_xf^m\xi_s|^2 e^{2\chi m}<\infty,\text{ }
		U^2(x,\xi_u)=2\sum_{m=0}^\infty|d_xf^{-m}\xi_u|^2 e^{2\chi m}<\infty.
\end{align*}
The same formula holds with $f(x)$ replacing $x$. One can then check that
\begin{align*}
\frac{U^2(f(x),d_xf\xi_u)}{U^2(x,\xi_u)}=e^{\pm \log(e^{2\chi}+M_f^2)}, \frac{S^2(f(x),d_xf\xi_s)}{S^2(x,\xi_s)}= e^{\pm \log(e^{2\chi}+M_f^2)}.
\end{align*}

It follows then, that $\frac{|C_\chi^{-1}(f(x))d_xf\xi|}{|C_\chi^{-1}(x)\xi|}= e^{\pm \log \sqrt{e^{2\chi}+M^2_f}}$. It follows that $\frac{\Jac(C_\chi(x))}{\Jac(C_\chi(f(x)))}=e^{\pm d\log \left(e^{2\chi}+M^2_f\right)}$. In addition, by \cite[Lemma~2.20]{SBO}, $\frac{\Jac(C_\chi(f(x)))}{\Jac(C_\chi(y))}=e^{\pm d\cdot \epsilon^2}=e^{\pm\epsilon}$ (for $\epsilon>0$ sufficiently small). Thus, in total,
$$\frac{\Jac(C_\chi(x))}{\Jac(C_\chi(y))}= \frac{\Jac(C_\chi(x))}{\Jac(C_\chi(f(x)))}\cdot \frac{\Jac(C_\chi(f(x)))}{\Jac(C_\chi(y))}= e^{\pm\left(\epsilon+d\log \left(e^{2\chi}+M^2_f\right)\right)}.$$

\ul{Step 4:} It follows by the definition of $\widetilde{G},\widetilde{F}$, that $V^u(\ul{u})=\psi_y\circ\widetilde{F}[R_{q^u}(0)]$ and $V^u(\sigma_R\ul{u})=\psi_x\circ\widetilde{G}[R_{p^u}(0)]$,
where $R_{q^u}(0)=\{u\in\mathbb{R}^{\mathrm{dim}H^u}:|u|_\infty\leq q^u\}$, $R_{p^u}(0)=\{u\in\mathbb{R}^ {\mathrm{dim}H^u}:|u|_\infty\leq p^u\}$. Recall, $\psi_x=\exp_x\circ C_\chi(x), \psi_y=\exp_y\circ C_\chi(y)$. Thus
\begin{align*}
\Vol(V^u(\sigma_R\ul{u}))=&\int_{R_{p^u}(0)} \Jac(d_{ C_\chi(x)\circ \widetilde{G}(t)}\exp_x)\cdot \Jac(C_\chi(x))\cdot \Jac(d_t\widetilde{G})d\mathrm{Leb}(t)\\	\leq & 2^d\cdot e^{\epsilon} \cdot(2p^u)^{\mathrm{dim}H^u(x)}\Jac(C_\chi(x))\text{ }(\because \|d_\cdot\exp_x\|\leq 2\text{ by Definition \ref{localisometries}})\\
\leq& 4^d  e^{2\epsilon} e^{d(\epsilon+\log\omega_0)}e^{\epsilon+d\log \left(e^{2\chi}+M^2_f\right)} \int_{R_{q^u}(0)} \Jac(d_{ C_\chi(y)\circ \widetilde{F}(t)}\exp_y)\cdot \Jac(C_\chi(y))\cdot \Jac(d_t\widetilde{F})d\mathrm{Leb}(t)\\
=& 4^d \cdot e^{2\epsilon}\cdot e^{d(\epsilon+\log\omega_0)}\cdot e^{\epsilon+d\log \left(e^{2\chi}+M^2_f\right)}\cdot \Vol(V^u(\ul{u})),
\end{align*}
The lemma follows with $\wh{\gamma}:= 4^d \cdot e^{2}\cdot e^{d(1+\log\omega_0)}\cdot e^{1+d\log \left(e^{2\chi}+M^2_f\right)}$.
\end{proof}

\begin{lemma}\label{guigant2}
Let $\ul{u},\ul{v}\in\Sigma^\#$ s.t. $\pi(\ul{u})= \pi(\ul{v})=p $. Then $\exists A\subseteq V^u(\ul{u})\cap V^u(\ul{v})$ an open set in $V^u(\ul{u})$ which depends only on $\ul{u}$ s.t. $\Vol(A) \geq \Vol(V^u(\ul{u}))\cdot \wh{\alpha} $, where $\wh{\alpha}\in (0,1)$ depends only on $M$.
\end{lemma}
\begin{proof}
Write $u_0=\psi_x^{p^s,p^u}, v_0=\psi_y^{q^s,q^u}$. 
By \cite[Lemma~4.12]{SBO}, $\frac{p^u}{q^u}=e^{\pm\epsilon^\frac{1}{3}}$. Let $B:=R_{\frac{p^u}{100\sqrt{d}}}(0)$ be a 
ball in $|\cdot|_\infty$-norm in $\mathbb{R}^d$ centered at $0$ with a radius of $\frac{p^u}{100\sqrt{d}} $. By \cite[Theorem~4.13]{SBO}, $\psi_y^{-1}\circ \psi_x[B]\subseteq R_{\frac{p^u}{100}+\frac{q^u}{10}+\frac{p^u}{100}\frac{1}{2}\epsilon^\frac{1}{3}}(0)\subseteq R_{\frac{q^u}{5}}(0)$. Thus $\psi_x[B]\subseteq \psi_y[R_{\frac{q^u}{5}}(0)]
\subseteq \psi_y[R_{q^u}(0)]$, and so $\psi_x[B]\cap V^u(\ul{u})\subseteq V^u(\ul{v})$ (since $V^u(\ul{u})$ spans over the window of $u_0$, and $V^u(\ul{v})$ spans over the window of $v_0$, and since both are local unstable leaves of $p$, they must coincide where the windows intersect).

Denote by $F$ the representing function of $V^u(\ul{u})$ in $u_0$. Write $\widetilde{F}(t):=(F(t),t)$. We now wish to compare the volume of $A:=\psi_x\circ \widetilde{F}[R_{\frac{p^u}{\sqrt{d}100}}(0)]\subseteq \psi_x[B]$. We follow the estimates in steps 2 and 4 of Lemma \ref{guigant}:
\begin{align*}
	\Vol(A)=&\int_{R_{\frac{p^u}{\sqrt{d}100}}(0)}\Jac(d_{\widetilde{F}(t)}\psi_x)\Jac(d_t\widetilde{F})d\mathrm{Leb}(t)\\
	=& \int_{R_{\frac{p^u}{\sqrt{d}100}}(0)}\Jac(d_{C_\chi(x)\widetilde{F}(t)}\exp_x)\Jac(C_\chi(x))\Jac(d_t\widetilde{F})d\mathrm{Leb}(t)\\
	\geq& 2^{-d}\cdot e^{-\epsilon}\cdot (2\frac{p^u}{100\sqrt{d}})^{\mathrm{dim}H^u(x)}\Jac(C_\chi(x))\\
	\geq &2^{-2d}\cdot e^{-2\epsilon}\cdot \frac{1}{(100\sqrt{d})^{d}} \int_{R_{p^u}(0)}\Jac(d_{\widetilde{F}(t)}\psi_x)\Jac(d_t\widetilde{F})d\mathrm{Leb}(t)= 2^{-2d}\cdot e^{-2\epsilon}\cdot \frac{1}{(100\sqrt{d})^{d}}\cdot \Vol(V^u(\ul{u})).
\end{align*}
Define $\wh{\alpha}:= 2^{-2d}\cdot e^{-2}\cdot \frac{1}{(100\sqrt{d})^{d}} \in (0,1)$.
 \end{proof}
\begin{cor}
	$\exists \wh{\omega}\geq1$ which depends only on $\chi,M$ and $f$, s.t. $\forall\ul{R}\in \Sig_L$, $\Vol(V^u(\sigma_R\ul{R}))\leq\wh{\omega}\Vol(V^u(\ul{R}))$.
\end{cor}
\begin{proof}
Fix some extension $\ul{R}^{\pm}\in\Sig^\#$ s.t. $R_i=R^{\pm}_i$ $\forall i\leq0$. By Definition \ref{unstablemanifold}, $\Vol(V^u(\sigma_R\ul{R}))=\Vol(\bigcap\limits_{\ul{u}\CAR\sigma_R\ul{R}}V^u(\ul{u}))$. Let $\ul{u}\in\Sigma_L^\#$ be a chain which covers $\ul{R}$, then $\sigma_R\ul{u}\CAR\sigma_R\ul{R}$. Fix a chain $\ul{v}\in\Sigma_L^\#$ s.t. $\ul{v}\CAR\ul{R}$. Let $A=A(\ul{v})$ be the set given by Lemma \ref{guigant2}. It follows that $A\subseteq \bigcap\limits_{\ul{u}\CAR\ul{R}}V^u(\ul{u})$. Then,
\begin{align*}
\Vol(V^u(\sigma_R\ul{R}))=&\Vol\left(\bigcap\limits_{\ul{u}\CAR\sigma_R\ul{R}}V^u(\ul{u}) \right)= \Vol \left(\bigcap\limits_{\ul{v}'\CAR\ul{R}}V^u(\sigma_R\ul{v}') \right) \leq \Vol \left(V^u(\sigma_R\ul{v})\right)\\
\leq &\wh{\gamma} \Vol \left(V^u(\ul{v}) \right)\leq \frac{\wh{\gamma}}{\wh{\alpha}}\Vol \left(A \right)\leq \frac{\wh{\gamma}}{\wh{\alpha}} \Vol \left(\bigcap\limits_{\ul{u}\CAR\ul{R}}V^u(\ul{u}) \right)= \frac{\wh{\gamma}}{\wh{\alpha}} \Vol \left(V^u(\ul{R})) \right)\text{ (}\because\text{Lemma \ref{guigant},Lemma \ref{guigant2})}.
\end{align*}
Define $\wh{\omega}:=\frac{\wh{\gamma}}{\wh{\alpha}}$.
\end{proof}
\begin{cor}\label{guigant3}
	$\phi:\Sig_L\rightarrow (-\infty,0]$ is bounded.
\end{cor}
\begin{proof}
	Let $\ul{R}\in \Sig_L$. Recall, $m_{V^u(\sigma_R\ul{R})}=\frac{\rho_{\sigma_R\ul{R}}}{\Vol(V^u(\sigma_R\ul{R}))}\cdot \lambda_{V^u(\sigma_R\ul{R})}$, where $\rho_{\sigma_R\ul{R}}:V^u(\sigma_R\ul{R})\rightarrow [e^{-\epsilon},e^\epsilon]$ and $\lambda_{V^u(\sigma_R\ul{R})}$ is the (not normalized) Riemannian volume measure of $V^u(\sigma_R\ul{R})$. As in Claim \ref{phibar}, $\phi(\ul{R})=\log m_{V^u(\sigma_R\ul{R})}\left(f^{-1}[V^u(\ul{R})]\right)$. Then,
	
	\begin{align*}
	\phi(\ul{R})=&\log m_{V^u(\sigma_R\ul{R})}\left(f^{-1}[V^u(\ul{R})]\right)=\pm\epsilon+\log\lambda_{V^u(\sigma_R\ul{R})} \left(f^{-1}[V^u(\ul{R})]\right)+\log\frac{1}{\Vol\left(V^u(\sigma_R\ul{R})\right)}\\
	=&\pm\epsilon\pm \log M_f+\log\frac{\Vol(V^u(\ul{R}))}{\Vol(V^u(\sigma_R\ul{R}))}\geq -\epsilon-\log M_f-\log \wh{\omega}>-\infty.	
	\end{align*}

\end{proof}
\noindent\textbf{Remark:} Claim \ref{VolHol} together with the boundness of $\phi$ imply that $\phi:\Sig_L\to\mathbb{R}^-$ is in fact H\"older continuous.

\subsection{Cohomology of $\phi$ to the Geometric Potential}\label{append2}
Recall the geometric potential from Definition \ref{TGP}: $\varphi:\Sig\rightarrow [-d\cdot\log M_f,d\cdot\log M_f]$, $\varphi(\ul{R}):=\log\Jac(d_{\widehat{\pi}(\ul{R})}f^{-1}|_{T_{\widehat{\pi}(\ul{R})}V^u(\ul{R})})$. In Part 1 of Theorem \ref{ruellemargulis} we saw: $\forall n\geq1$,
\begin{align}
	|\phi_n(\ul{R})-\varphi_n(\ul{R}^\pm)|\leq2\epsilon,
\end{align}
where $\ul{R}$ is any $n$-periodic chain in $\Sig_L$, and $\ul{R}^\pm$ is the unique periodic extension of $\ul{R}$ to $\Sig$.

In this section, we show further that in fact $\phi$ and $\varphi$ are cohomologous:

\begin{claim}
There exists a locally H\"older continuous function $\mathcal{L}:\Sig\to\mathbb{R}$ s.t. $$\phi=\varphi + \mathcal{L}-\mathcal{L}\circ \sigma^{-1}$$	
when restricted to irreducible components of $\Sig$.
\end{claim}
$\mathcal{L}$ is called a H\"older continuous {\em transfer function}, and $\mathcal{L}-\mathcal{L} \circ\sigma^{-1}$ is called a {\em coboundary}.
\begin{proof}
Assume w.l.o.g. that $\Sig_L$ is irreducible. By Part 2 in Theorem \ref{ruellemargulis}, there exists a bounded and H\"older continuous function $A:\Sig\to\mathbb{R}$ s.t. $$\varphi^-:=\varphi+A-A\circ \sigma^{-1}$$
is a well-defined and H\"older continuous function $\varphi^-:\Sig_L\to\mathbb{R}$ (i.e. depends only on the negative coordinates of chains).

We continue to show that $\varphi^-$ and $\phi$ are cohomologous with a H\"older continuous transfer function, which in turn implies that $\phi$ and $\varphi$ are cohomologous with a H\"older continuous transfer function.

Let $n\geq 1$, and let $\ul{w}=(R,w_{-(n-2)},\ldots,w_{-1},R)$ be an admissible word of length $n$.	 Let $\ul{W}$ be its periodic extension to $\Sig_L$.

Then it follows that for any $m\in \mathbb{N}$,
\begin{align*}
	\phi_{n\cdot m}(\ul{W})-\varphi^-_{n\cdot m}(\ul{W})=m\cdot\left(\phi_n(\ul{W})-\varphi_n^-(\ul{W}) \right).
\end{align*}
At the same time, $|\phi_{n\cdot m}(\ul{W})-\varphi^-_{n\cdot m}(\ul{W}) |\leq 2(\epsilon+\|A\|_\infty)$ for all $n$ and $m$. Therefore it follows that $\forall n\geq1$, and for every $n$-periodic chain $\ul{R}\in\Sig_L$,
$$\phi_n(\ul{R})-\varphi^-_n(\ul{R})=0.$$
By Livsic's theorem (see \cite[Theorem~1.1]{SarigTDF} for the statement and the proof in the context of countable Markov shifts), there exists a locally H\"older continuous function\footnote{H\"older continuous on compact sets, while the H\"older exponent is uniform on the space. } $\mathcal{L}':\Sig_L\to\mathbb{R}$ s.t. $$\phi=\varphi^-+\mathcal{L}'-\mathcal{L}' \circ \sigma_R.$$

Thus, in total, $$\phi= \varphi+A-A\circ \sigma^{-1}+\mathcal{L}'-\mathcal{L}' \circ \sigma^{-1}= \varphi+(A+\mathcal{L}')-(A+\mathcal{L}') \circ \sigma^{-1} .$$
Set $\mathcal{L}=A+\mathcal{L}'$.
\end{proof}

\noindent \textbf{Remark:} Carrying out \eqref{forAppend} without the assumption $\ul{R}=\sigma_R^n\ul{R}$ yields the more careful estimate $e^{\phi_n-\varphi_n}=e^{\pm2\epsilon}\frac{\Vol(V^u(\ul{R}))}{\Vol(V^u(\sigma_R^n\ul{R}))}$, which in general may not be bounded in $n$, even on irreducible components. This implies that while both $\phi$ and $\varphi$ are H\"older continuous (and bounded), and $\mathcal{L}$ is locally H\"older continuous, and $\phi=\varphi+\mathcal{L}-\mathcal{L}\circ\sigma^{-1}$, in general $\mathcal{L}$ may still be unbounded.

\end{appendix}

\tocless{\section*{Acknowledgements}} This work constitutes a part of a doctoral thesis,  conducted in the Weizmann Institute of Science under the guidance of O. Sarig. I would like to thank Professor Pesin for introducing this question to me, for reading this manuscript with care, and for his useful remarks. In addition, I would like to thank the referees for their useful, detailed, and insightful remarks. I would also like to thank the Weizmann Institute of Science for excellent working conditions. This research was partially supported by ISF grant 1149/18 and BSF grant 2016105.

\bibliographystyle{alpha}
\tocless\bibliography{Elphi}
\end{document}